\title{Logarithmic intersections of double ramification cycles}
\numberwithin{equation}{subsection}
\newcommand{\mat}[1]{\begin{bmatrix}#1\end{bmatrix}}
\newcommand*{\doublerightarrow}[2]{\mathrel{
  \settowidth{\@tempdima}{$\scriptstyle#1$}
  \settowidth{\@tempdimb}{$\scriptstyle#2$}
  \ifdim\@tempdimb>\@tempdima \@tempdima=\@tempdimb\fi
  \mathop{\vcenter{
    \offinterlineskip\ialign{\hbox to\dimexpr\@tempdima+1em{##}\cr
    \rightarrowfill\cr\noalign{\kern.5ex}
    \rightarrowfill\cr}}}\limits^{\!#1}_{\!#2}}}
\newcommand*{\triplerightarrow}[1]{\mathrel{
  \settowidth{\@tempdima}{$\scriptstyle#1$}
  \mathop{\vcenter{
    \offinterlineskip\ialign{\hbox to\dimexpr\@tempdima+1em{##}\cr
    \rightarrowfill\cr\noalign{\kern.5ex}
    \rightarrowfill\cr\noalign{\kern.5ex}
    \rightarrowfill\cr}}}\limits^{\!#1}}}
\newcommand{\on}[1]{\operatorname{#1}}
\newcommand{\bb}[1]{{\mathbb{#1}}}
\newcommand{\ca}[1]{{\mathcal{#1}}}
\newcommand{\bd}[1]{{\mathbf{#1}}}
\newcommand{\ul}[1]{{\underline{#1}}}
\newcommand{\Span}[1]{\left<#1\right>}
\newcommand{\hra}{\hookrightarrow}
\newcommand{\sub}{\subseteq}
\newcommand{\tra}{\rightarrowtail}
\theoremstyle{definition}
\newtheorem{definition}{Definition}[section]
\theoremstyle{plain}
\newtheorem{lemma}[definition]{Lemma}
\newtheorem{theorem}[definition]{Theorem}
\newtheorem{corollary}[definition]{Corollary}
\theoremstyle{remark}
\newtheorem{remark}[definition]{Remark}
\newtheorem{example}[definition]{Example}
\LetLtxMacro{\phiorig}{\phi}
\renewcommand{\phi}{\varphi}
\newcommand{\loz}{\lozenge}
\newcommand{\bloz}{\blacklozenge}
\newcommand{\Dcomment}[1]{{\color{blue}D: #1}}
\author{David Holmes and Rosa Schwarz}
\date{\today}
\newcounter{nootje}
\newcommand{\beq}{\begin{equation}}
\newcommand{\eeq}{\end{equation}}
\newcommand{\beqs}{\begin{equation*}}
\newcommand{\eeqs}{\end{equation*}}
\renewcommand{\k}{k}
\tikzset{
  symbol/.style={
    draw=none,
    every to/.append style={
      edge node={node [sloped, allow upside down, auto=false]{$#1$}}}
  }
}
\begin{document}
\maketitle


\begin{abstract} 
We describe a theory of logarithmic Chow rings and tautological subrings for logarithmically smooth algebraic stacks, via a generalisation of the notion of piecewise-polynomial functions. Using this machinery we prove that the double-double ramification cycle lies in the tautological subring of the (classical) Chow ring of the moduli space of curves, and that the logarithmic double ramification cycle is divisorial (as conjectured by Molcho, Pandharipande, and Schmitt). 
\end{abstract}


\tableofcontents

\newcommand{\Mtildes}{ \widetilde{\ca M}^\Sigma}
\newcommand{\sch}[1]{\textcolor{blue}{#1}}

\newcommand{\Mbar}{\overline{\ca M}}
\newcommand{\MD}{\ca M^\blacklozenge}
\newcommand{\Md}{\ca M^\lozenge}
\newcommand{\DRL}{\mathsf{DRL}}
\newcommand{\DR}{\mathsf{DR}}
\newcommand{\DRC}{\mathsf{DRC}}
\newcommand{\isom}{\stackrel{\sim}{\longrightarrow}}
\newcommand{\Ann}[1]{\on{Ann}(#1)}
\newcommand{\fm}{\mathfrak m}
\newcommand{\Mdk}{\Mbar^{\m, 1/\k}}
\newcommand{\field}{k}
\newcommand{\Mdm}{\Mbar^\m}
\newcommand{\m}{{\bd m}}
\newcommand{\cat}[1]{\bd{#1}}
\newcommand{\M}{\mathsf{M}}
\newcommand{\ghost}{\bar{\mathsf{M}}}
\newcommand{\gp}{\mathsf{gp}}
\newcommand{\fib}{\mathsf{cat}}
\newcommand{\et}{\mathsf{\acute{e}t}}
\renewcommand{\sf}[1]{\mathsf{#1}}
\newcommand{\Pic}{\mathfrak{Pic}}
\newcommand{\Sym}{\on{Sym}}
\newcommand{\Chow}{\on{CH}}
\newcommand{\Spec}{\on{Spec}}
\newcommand{\Picabs}{\mathfrak{Pic}}
\newcommand{\Picrel}{\mathfrak{Pic}^{\mathrm{rel}}}
\newcommand{\CHop}{\Chow}
\newcommand{\divCHop}{\on{divCH}}
\newcommand{\DRop}{\mathsf{DR}} 
\newcommand{\LogChow}{\on{LogCH}}
\newcommand{\divLogChow}{\on{divLogCH}}
\newcommand{\LogDR}{\sf{LogDR}}
\newcommand{\Pictdz}{\mathfrak{Jac}}
\renewcommand{\log}{\sf{log}}
\newcommand{\trop}{\sf{trop}}
\newcommand{\op}{\sf{op}}
\newcommand{\aj}{\sf{aj}}
\newcommand{\GL}{\on{GL}}
\newcommand{\J}{\sf{J}}

\section{Introduction}

If $C/S$ is a family of smooth algebraic curves and $\ca L$ on $C$ a line bundle, the double ramification cycle measures the locus of points $s \in S$ where the line bundle $\ca L$ becomes trivial upon restriction to the fibre $C_s$. More formally, $\DRop(\ca L)$ is a virtual fundamental class of this locus, living in the Chow group of codimension $g$ cycles on $S$. Extending this class in a natural way to families of (pre)stable curves, and giving a tautological formula, has been the subject of much recent research, including 
\cite{
Faber2005Relative-maps-a,
Hain2013Normal-function,
Grushevsky2012The-double-rami,
Dudin2015Compactified-un,
Farkas2016The-moduli-spac,
Schmitt2016Dimension-theor,
Janda2016Double-ramifica,
Marcus2017Logarithmic-com,
Janda2018Double-ramifica,
Holmes2017Jacobian-extens,
Holmes2017Extending-the-d,
Holmes2019Infinitesimal-s,Holmes2017Multiplicativit}. In particular, \cite{Bae2020Pixtons-formula} gives a definition of a double ramification cycle $\DRop(\ca L)$ for any line bundle $\ca L$ on any family $C/S$ of prestable curves, and proves a tautological formula for this cycle. 

\subsection{Double-double ramification cycles are tautological}
Suppose now we have two line bundles $\ca L_1$, $\ca L_2$ on a smooth curve $C/S$. Then the \emph{double-double 
 ramification cycle} $\DRop(\ca L_1, \ca L_2)$ measures the locus of $s \in S$ such that both $\ca L_1$ and $\ca L_2$ become trivial on the fibre $C_s$ -- of course, this is just the intersection of the corresponding cycles $\DRop(\ca L_1)$ and $\DRop(\ca L_2)$. The key insight of \cite{Holmes2017Multiplicativit} was that this naive intersection is the `wrong' way to extend this class to a family of (pre)stable curves. Instead, one should construct a new virtual class for the product, and in general it will not equal the product of the virtual classes of the two factors:
\begin{equation}\label{eq:DDR_neq_prod}
\DRop(\ca L_1, \ca L_2) \neq \DRop(\ca L_1) \cdot \DRop(\ca L_2). 
\end{equation}

Why is this new construction better than simply taking the intersection of the classes? One way to see this is to consider what happens when one tensors the line bundles $\ca L_1$ and $\ca L_2$ together. For a family of smooth curves one sees easily the formula
\begin{equation}\label{eq:false_mult}
\DRop(\ca L_1)\DRop(\ca L_2) = \DRop(\ca L_1)\DRop(\ca L_1\otimes \ca L_2);
\end{equation}
this also holds in compact type, which plays a key role in the construction of quadratic double ramification integrals and the noncommutative KdV hierarchy in \cite{Buryak2019Quadratic-doubl}. However, \ref{eq:false_mult} \emph{fails} for general families of (pre)stable curves, obstructing the extension of quadratic double ramification integrals beyond the compact-type case (see \cite[\S8]{Holmes2017Multiplicativit} for an explicit example of this failure). 
On the other hand, the formula 
\begin{equation}\label{eq:true_mult}
\DRop(\ca L_1,\ca L_2) = \DRop(\ca L_1,\ca L_1\otimes \ca L_2)
\end{equation}
\emph{does} hold for arbitrary families, giving hope of extending the results of \cite{Buryak2019Quadratic-doubl} beyond compact-type. This is a particular instance of a $\GL_2(\bb Z)$-invariance property for the double-double ramification cycles, which we generalise in \ref{thm:invariance_on_Mbar} to $\GL_r(\bb Z)$-invariance for $r$-fold products. 

While the cycle $\DRop(\ca L_1,\ca L_2)$ is in some ways better behaved than the product $\DRop(\ca L_1)\DRop(\ca L_2)$, until now the question of whether it is a tautological cycle has remained open, and is important to address if we hope to study quadratic double ramification integrals. Our first main theorem resolves this question: 
\begin{theorem}\label{thm:intro_DDR_taut}
Let $g$, $n$ be non-negative integers, $r$ a positive integer, and $\ca L_1, \dots, \ca L_r$ be line bundles on the universal curve over $\Mbar_{g,n}$. Then the $r$-fold double ramification cycle 
\begin{equation}
\DRop(\ca L_1, \dots, \ca L_r)
\end{equation}
lies in the tautological subring of the Chow ring of $\Mbar_{g,n}$. 
\end{theorem}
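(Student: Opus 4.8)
The plan is to pass to the logarithmic Chow ring, where the product of double ramification cycles behaves multiplicatively, and then to descend back to the classical Chow ring. Concretely, I would first lift each class $\DRop(\ca L_i)$ to its logarithmic refinement $\LogDR(\ca L_i) \in \LogChow(\Mbar_{g,n})$, the class living in the colimit of classical Chow rings over all logarithmic modifications (blow-ups) of $\Mbar_{g,n}$. The point of working logarithmically is that, although \ref{eq:DDR_neq_prod} records the failure of multiplicativity for the classical cycles, the obstruction disappears after a suitable modification: on a log blow-up on which all of the $\ca L_i$ simultaneously admit the Abel--Jacobi (rational-section) description used to build the DR cycle, the $r$-fold cycle becomes the genuine intersection product of the individual ones. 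I would therefore establish, or invoke as the logarithmic shadow of \ref{thm:invariance_on_Mbar} and the honest replacement for the false identity \ref{eq:false_mult}, the multiplicativity statement
\beq
\LogDR(\ca L_1, \dots, \ca L_r) = \prod_{i=1}^r \LogDR(\ca L_i) \in \LogChow(\Mbar_{g,n}).
\eeq

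The second step is to show that each factor $\LogDR(\ca L_i)$ lies in the \emph{logarithmic tautological subring}, and that this subring is closed under products. Generation should reduce to the single-line-bundle case: the classical $\DRop(\ca L_i)$ is tautological by Pixton's formula \cite{Bae2020Pixtons-formula}, and its logarithmic lift is assembled from pullbacks of these tautological classes together with the piecewise-polynomial functions that encode the combinatorics of the boundary of the modification. Since the tautological part of $\LogChow$ is by construction a subring, the product $\prod_i \LogDR(\ca L_i)$ is again logarithmically tautological.

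The final step is to push the logarithmic class forward to $\Chow(\Mbar_{g,n})$ and to check that the image of the logarithmic tautological subring lands inside the classical tautological subring. The cycle $\DRop(\ca L_1, \dots, \ca L_r)$ is exactly the image of the logarithmic product under the natural map $\LogChow(\Mbar_{g,n}) \to \Chow(\Mbar_{g,n})$, so it suffices to know that this map carries log-tautological classes to tautological ones, whence \ref{thm:intro_DDR_taut} follows.

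I expect this last step to be the main obstacle. Pullbacks of tautological classes cause no trouble, but the piecewise-polynomial generators contribute classes supported on the boundary strata of the modifications, and one must show that their pushforwards to $\Mbar_{g,n}$ are tautological. This reduces to the assertion that the proper pushforward of a (strict) piecewise-polynomial function along a log blow-down is tautological --- equivalently, that these boundary contributions can be expressed through gluing morphisms decorated with $\psi$- and $\kappa$-classes. Controlling this pushforward uniformly over the whole inverse system of modifications, rather than on a single chosen blow-up, is the delicate point, and is precisely where the theory of piecewise-polynomial functions developed earlier in the paper must do the real work.
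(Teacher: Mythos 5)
Your overall architecture --- lift to $\LogChow$, multiply there, show the product is log-tautological, push forward --- is the paper's architecture, and your identification of the two descent issues (closure of the log-tautological subring under products, and pushforward to the classical tautological ring) is accurate. But there is a genuine gap at the step you pass over most quickly: the claim that each $\LogDR(\ca L_i)$ lies in the log-tautological subring. You justify this by saying that $\DRop(\ca L_i)$ is tautological by Pixton's formula and that its logarithmic lift is ``assembled from pullbacks of these tautological classes together with piecewise-polynomial functions''. That is the conclusion of the paper's main technical theorem (\ref{thm:LogDR_tautological}), not something that follows from tautologicality of the classical cycle: $\DRop(\ca L_i)$ is the \emph{pushforward} $\nu_*\LogDR(\ca L_i)$, and knowing the pushforward is tautological gives no control over the class upstairs on the tower of blowups, since $\nu_*$ destroys information. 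What the paper actually does is work universally on $\Pictdz$ and prove a new invariance: after a monoidal alteration of a quasi-compact open of $\Pictdz$ and a subdivision of the universal curve making it regular, a piecewise-linear twisting function $\alpha$ defined over the twistable locus extends across the whole base (\ref{lem:extend_pl}, \ref{lem:extend_PL_after_alteration}), making the family \emph{almost twistable}, whence $\phi_{\ca L}^*\LogDR = \phi_{\ca L(\alpha)}^*\DRop$ (\ref{lem:invariance_7_b}, \ref{thm:key_comparison}). Only at that point does Pixton's formula enter: it is applied to the multidegree-$\ul 0$ bundle $\ca L(\alpha)$, and one checks term by term that its ingredients (the class $\pi_*(c_1(\ca L(\alpha))^2)$ and the classes $\psi_h+\psi_{h'}$ at the nodes) lie in the subring generated by $\eta$ and boundary divisors (\ref{lem:LogDR_taut}). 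Without this twisting-and-extension argument your second step has no proof.

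Two smaller remarks. First, the multiplicativity identity you propose to establish is, in the paper, the \emph{definition} of $\DRop(\ca L_1,\dots,\ca L_r)$ (namely $\nu_*$ of the product of the $\LogDR(\ca L_i)$ pulled back to $\Pictdz^r$), so nothing needs proving there; the only content is the comparison with the universal $\sigma_r$-extending construction (\ref{lem:twodefLogDRr}). Second, the final pushforward step that you flag as the main obstacle is in fact the least laborious part of the paper's argument: it is exactly the statement that the tautological ring is tectonic (\ref{lem:tectonic_condition}), which is delegated to the result of Molcho--Pandharipande--Schmitt that pushforwards of normally decorated strata classes along log blowdowns remain normally decorated, together with the fact that the tautological ring of $\Mbar_{g,n}$ contains all such classes. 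So your instinct about where ``the real work'' happens is inverted: the real work is in the log-tautologicality of $\LogDR$ itself.
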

This theorem opens up the possibility of giving an explicit formula for the class $\DRop(\ca L_1, \dots, \ca L_r)$ in terms of the standard generators of the tautological ring, as was done in \cite{Janda2016Double-ramifica} for the case $r=1$ (that $\DRop(\ca L)$ lies in the tautological ring was proven earlier by Faber and Pandharipande \cite{Faber2005Relative-maps-a}, but no formula was given at that time). 

\begin{remark}
Ranganathan and Molcho have an independent approach (in a paper to appear soon) to \ref{thm:intro_DDR_taut}, by studying the virtual strict transforms of the DR cycle. 
\end{remark}

\subsection{Logarithmic Chow rings}

The fundamental reason for the failure of the product formula \ref{eq:false_mult} for stable curves is that $\DRop(\ca L)$ should not really be viewed as a cycle on $\Mbar_{g,n}$, but rather it lives naturally on a \emph{log blowup} of $\Mbar_{g,n}$ --- essentially an iterated blowup in reduced boundary strata. To avoid having to make a choice of blowup, we work on $\LogChow(\Mbar_{g,n})$, which is defined to be the colimit of all log blowups of $\Mbar_{g,n}$ and comes with a proper pushforward $\nu_*\colon \LogChow(\Mbar_{g,n}) \to \CHop(\Mbar_{g,n})$, which is a group homomorphism but \emph{not} a ring homomorphism. The construction of $\DRop(\ca L)$ can be upgraded (see \ref{def:logDR}) to give a cycle $\LogDR(\ca L) \in \LogChow(\Mbar_{g,n})$, whose pushforward to $\CHop(\Mbar_{g,n})$ is $\DRop(\ca L)$. The formula
\begin{equation}
\LogDR(\ca L_1)\LogDR(\ca L_2) = \LogDR(\ca L_1)\LogDR(\ca L_1\otimes \ca L_2)
\end{equation}
is not hard to prove in $\LogChow(\Mbar_{g,n})$ (see \ref{thm:invariance_on_Mbar}). We then define 
\begin{equation}
\DRop(\ca L_1, \dots, \ca L_r) = \nu_*\left( \LogDR(\ca L_1) \cdots \LogDR(\ca L_r)\right), 
\end{equation}
from which the product formula \ref{eq:true_mult} is immediate. The fact that \ref{eq:false_mult} fails is then just a symptom of the fact that proper pushforward $\nu_*\colon \LogChow(\Mbar_{g,n}) \to \CHop(\Mbar_{g,n})$ is not a ring homomorphism. 

\subsection{Logarithmic tautological rings}

Our proof of \ref{thm:intro_DDR_taut} (that  double-double ramification cycles are tautological) will run via showing that $\LogDR(\ca L)$ is tautological; but first we have to decide what it means for a cycle in $\LogChow(\Mbar_{g,n})$ to be tautological. 

In fact, we need to do something slightly more general. Our proof that $\LogDR(\ca L)$ is tautological for a line bundle $\ca L$ on the universal curve over $\Mbar_{g,n}$ proceeds by reduction to the fact that the usual double ramification cycle is tautological. However, for this reduction step it will be necessary to modify the universal curve (so that it is no longer stable, only prestable), and also to modify the line bundle $\ca L$. This leads us to study double ramification cycles on the total-degree-zero\footnote{In \cite{Bae2020Pixtons-formula} we do not assume total degree zero, but the DR cycle is supported in total degree zero, so this is only a superficial change. } Picard stack $\Pictdz$ of the universal curve over the stack $\frak M$ of all prestable marked curves, exactly the setting considered in \cite{Bae2020Pixtons-formula}. 

It is then necessary to define a tautological subring of $\LogChow(\Pictdz)$, which is slightly delicate as this smooth algebraic stack is neither Deligne-Mumford nor quasi-compact. For this we develop a theory of piecewise-polynomial functions on any log algebraic stack, and for log smooth stacks over a field or dedekind scheme we construct a map from the space of piecewise-polynomial functions to the log Chow ring. We then define the tautological subring as the ring generated by image of this map together with pullbacks of classes in the usual tautological ring on $\Pictdz$ (as described in \cite[definition 4]{Bae2020Pixtons-formula}). This leads to our main technical result, from which \ref{thm:intro_DDR_taut} follows easily: 
\begin{theorem}\label{thm:intro_LogDR_tautological}
$\LogDR$ lies in the tautological subring of $\LogChow(\Pictdz)$. 
\end{theorem}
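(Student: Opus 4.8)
The plan is to deduce \ref{thm:intro_LogDR_tautological} from the fact, established in \cite{Bae2020Pixtons-formula}, that the \emph{ordinary} double ramification cycle is tautological, by comparing the two classes on a single sufficiently fine log blowup of $\Pictdz$. Recall that $\Pictdz$ carries a universal prestable curve pulled back from $\frak M$, together with a universal total-degree-zero line bundle $\ca L$, and that the ordinary $\DRop$ on $\Pictdz$ arises as the (refined) class of the unit section of $\Pictdz \to \frak M$. By construction (\ref{def:logDR}) the class $\LogDR$ is represented, on some log blowup $b \colon \widetilde{\Pictdz} \to \Pictdz$ resolving the Abel--Jacobi section, by an ordinary Chow class on $\widetilde{\Pictdz}$. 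The first step is to identify this representative explicitly: on the resolution the Abel--Jacobi section extends to a map into a proper model of the Jacobian, and the pullback of the unit section should be computed by the ordinary cycle $\DRop(\ca L')$ of a modified line bundle $\ca L' = b^*\ca L \otimes \ca O(D)$, where $D$ is a vertical boundary divisor on the universal curve whose multiplicities are prescribed by the piecewise-linear function cutting out the log blowup.

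Since $\widetilde{\Pictdz}$ is itself a log smooth stack mapping to $\frak M$ and carrying a prestable curve with a total-degree-zero line bundle $\ca L'$, the tautological formula of \cite{Bae2020Pixtons-formula} applies verbatim and expresses $\DRop(\ca L') \in \CHop(\widetilde{\Pictdz})$ as a polynomial in the standard generators $\psi$, $\kappa$, $\eta$ and the boundary classes of $\widetilde{\Pictdz}$. The remaining work is bookkeeping: I would sort the classes appearing into two groups. The classes $\psi$, $\kappa$ and the Jacobian class $\eta$ on $\widetilde{\Pictdz}$ are pullbacks under $b$ of the corresponding classes on $\Pictdz$, and so contribute pullbacks of ordinary tautological classes. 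The boundary divisors of $\widetilde{\Pictdz}$ that are genuinely new --- the exceptional divisors of the log blowup and the divisors recording the twist $D$ --- are, by the very construction of the log blowup, the divisors associated to piecewise-linear functions on the cone complex of $\Pictdz$, so they and all their products lie in the image of the piecewise-polynomial map into $\LogChow(\Pictdz)$. Viewing the resulting class as an element of the colimit $\LogChow(\Pictdz)$, one therefore exhibits $\LogDR$ as a polynomial in piecewise-polynomial classes and pullbacks of ordinary tautological classes, i.e.\ as an element of the tautological subring.

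The hard part will be the first step: pinning down the representative of $\LogDR$ on the blowup as an honest $\DRop(\ca L')$, and in particular checking that every correction term introduced by passing from $\Pictdz$ to $\widetilde{\Pictdz}$ --- both the exceptional boundary classes and the twisting multiplicities --- is captured by piecewise-polynomial functions rather than by some genuinely new, non-tautological class. This is precisely where the theory of piecewise-polynomial functions on a log smooth stack developed earlier in the paper is indispensable, since $\Pictdz$ is neither Deligne--Mumford nor quasi-compact and so the dictionary between boundary divisors and piecewise-linear functions, as well as its compatibility with the colimit structure on $\LogChow$, must be set up by hand. A secondary subtlety is independence of the chosen blowup $b$: because $\LogChow$ is a colimit, I would either check that the resulting expression is stable under further blowup, or simply observe that membership in the tautological subring is a property of the class in the colimit and is witnessed on any single representative.
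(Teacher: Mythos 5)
Your overall strategy is the paper's: represent $\LogDR$ on a suitable modification as the ordinary $\DRop$ of a boundary-twisted line bundle of multidegree $\ul 0$, apply Pixton's formula from \cite{Bae2020Pixtons-formula}, and check that the resulting generators are either pullbacks of ordinary tautological classes or images of piecewise-polynomial functions. The second half of your plan (the ``bookkeeping'') is essentially \ref{lem:LogDR_taut}, and your closing remark that membership in the tautological subring is witnessed on any single representative is exactly how the paper handles independence of the blowup.

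However, your first step would fail as literally stated, and the difficulty is not quite where you locate it. On a log blowup $b\colon \widetilde{\Pictdz}\to\Pictdz$ resolving the Abel--Jacobi map, the required vertical divisor $D$ on the pulled-back universal curve is in general only a Weil divisor: it fails to be Cartier at the nodes lying over the ``danger'' points of the exceptional locus, so $\ca L' = b^*\ca L\otimes\ca O(D)$ does not exist as a line bundle there. The paper must therefore subdivide the universal curve itself and pass to a monoidal alteration with \emph{regular} total space (\ref{lem:regular_semistable_model}, using the canonical semistable reduction of \cite{Adiprasito2018Semistable-redu} --- this is also why $\LogChow$ is built from monoidal alterations rather than log blowups alone), and then extend the piecewise-linear twisting function from the dense open where the Abel--Jacobi map extends to the whole base (\ref{lem:extend_PL_nuclear}, \ref{lem:extend_pl}); regularity of the total space of the curve is what makes this extension possible. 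Even after all this, multidegree $\ul 0$ is achieved only over a dense open, never everywhere, which is why the paper introduces \emph{almost twistable} families and proves the comparison $\phi_{\ca L}^*\LogDR=\phi_{\ca L(\alpha)}^*\DRop$ via the partial compactification $\bar\J$ of the Jacobian (\ref{def:almost_twistable}, \ref{lem:invariance_7_b}, \ref{lem:extend_PL_after_alteration}). By contrast, the part you flag as the hard one --- checking that the correction terms are piecewise-polynomial --- is a short direct computation once the twist exists. A minor further point: $\Pictdz$ is not quasi-compact, so ``a single sufficiently fine log blowup'' must be replaced by working on quasi-compact opens and using the limit description of $\LogChow$.
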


In fact we prove a stronger result (\ref{thm:LogDR_tautological}); if $\ca L$ is the universal line bundle on the universal curve $\pi\colon C  \to \Pictdz$, we define the class 
\begin{equation}
\eta = \pi_*(c_1(\ca L)^2) \in \CHop(\Pictdz), 
\end{equation}
and prove that $\LogDR$ lies in the subring of $\LogChow(\Pictdz)$ generated by boundary divisors and the class $\eta$. 

%
%


\subsection{$\LogDR$ is divisorial}

Double ramification cycles in logarithmic Chow rings are also studied in the recent paper \cite{Molcho2021The-Hodge-bundl}, with a particular emphasis on the case of the trivial bundle (corresponding to the top chern class of the Hodge bundle on the moduli space of curves). The objective there is to understand the complexity of $\DRop(\ca O_C)$ in the Chow ring, in particular to understand when it can be written as a polynomial in divisor classes. It is shown that $\DRop(\ca O_C)$ cannot be written as  polynomial in divisor classes, and conjectured that $\LogDR(\ca L)$ can be written as a polynomial in divisors for all $\ca L$. As a byproduct of the proof of \ref{thm:intro_LogDR_tautological} we obtain something a little more general. The ring $\LogChow(S)$ is graded by codimension, and we write $\divLogChow(S)$ for the subring generated in degree 1. Since $\LogDR$ lies in the ring generated by $\eta$ and boundary divisors, we immediately obtain
\begin{theorem}
\begin{equation*}
\LogDR \in \divLogChow(\Pictdz). 
\end{equation*}
\end{theorem}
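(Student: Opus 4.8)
The plan is to deduce this statement directly from the stronger result asserted just above it, namely $\LogDR$ lies in the subring of $\LogChow(\Pictdz)$ generated by the boundary divisors together with the single class $\eta = \pi_*(c_1(\ca L)^2)$. I should observe at the outset that the boundary divisors all live in codimension $1$, so the only obstacle to concluding $\LogDR \in \divLogChow(\Pictdz)$ is the class $\eta$, which a priori lives in codimension $2$.

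The key step, then, is to show that $\eta$ itself lies in $\divLogChow(\Pictdz)$ --- that is, that the pushforward $\pi_*(c_1(\ca L)^2)$ can be expressed as a polynomial in divisor classes in the \emph{logarithmic} Chow ring. This is precisely the place where working in $\LogChow$ rather than $\CHop$ is essential: the analogous statement fails in the ordinary Chow ring (this is exactly the content of the Molcho--Pandharipande--Schmitt observation that $\DRop(\ca O_C)$ is not a polynomial in divisors). I expect that on a suitable log blowup the line bundle $\ca L$ acquires a piecewise-linear twist whose associated divisor splits the class $c_1(\ca L)$, after pushforward, into a product of divisorial contributions; concretely, one writes $c_1(\ca L) = c_1(\ca L') + (\text{correction})$ where on the log blowup the correction is the divisor of a piecewise-linear function, and then $\pi_*$ of the square becomes a sum of products of divisor classes once the fibral terms are controlled. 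The cleanest route is likely to package this via the theory of piecewise-polynomial functions developed earlier in the paper: since piecewise-\emph{linear} functions map into $\divLogChow$ by construction, it suffices to exhibit $\eta$ as the image of a piecewise-polynomial function that is a product of piecewise-linear pieces.

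Once $\eta \in \divLogChow(\Pictdz)$ is established, the conclusion is formal. The subring of $\LogChow(\Pictdz)$ generated by boundary divisors and $\eta$ is contained in $\divLogChow(\Pictdz)$, because $\divLogChow(\Pictdz)$ is by definition the subring generated in degree $1$ and it now contains both the boundary divisors (degree $1$) and $\eta$; being a subring, it contains every polynomial in these generators, and in particular it contains $\LogDR$ by the stronger result.

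The main obstacle I anticipate is the second paragraph: proving $\eta$ is divisorial in $\LogChow$. This is genuinely a logarithmic phenomenon and cannot be done on $\Mbar_{g,n}$ or $\Pictdz$ directly; it requires choosing (or passing to the colimit over) an appropriate log blowup on which the universal line bundle admits a divisorial normal form, and then checking that the fibral integration $\pi_*$ interacts correctly with the piecewise-linear structure. If, however, the stronger statement preceding this theorem is formulated so that $\eta$ is \emph{already known} to lie in $\divLogChow$ (rather than merely generating the relevant subring alongside the boundary divisors), then this obstacle dissolves and the entire theorem is an immediate one-line corollary; I would first check the precise phrasing of that result before committing to the harder argument.
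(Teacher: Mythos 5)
Your ``key step'' rests on a miscount of codimension. The class $\eta = \pi_*(c_1(\ca L)^2)$ is \emph{not} a codimension-$2$ class: $\pi$ is the universal curve, of relative dimension $1$, so pushing forward the codimension-$2$ class $c_1(\ca L)^2$ drops the codimension by one and yields a class of degree $1$ on $\Pictdz$. Thus $\eta$ is already a divisor class, and the entire programme of your second paragraph (finding a piecewise-linear normal form for $\ca L$ on a log blowup, controlling the fibral terms of $\pi_*$, and so on) is unnecessary --- you are in exactly the situation you describe in your final hedge. The paper's proof does indeed start from the observation that $\bb Q[\eta] \sub \divCHop(\Pictdz)$ because $\eta$ has degree $1$.

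However, the step you dismiss as ``formal'' is where the real content lies, and as written it has a gap. The stronger result is \ref{thm:LogDR_tautological}: $\LogDR \in \bb Q[\eta]^\log$, where by \ref{def:log_tautological} this is the sub-$\bb Q[\eta]$-algebra of $\LogChow(\Pictdz)$ generated by the image of $\Phi^\log\colon \on{sPP}(\Pictdz) \to \LogChow(\Pictdz)$, i.e.\ by classes of arbitrary (subdivided) piecewise-\emph{polynomial} functions --- not merely by monomials in boundary divisor classes. A global piecewise-polynomial of degree $n \ge 2$ need not be a polynomial in global piecewise-linear functions (see \ref{eg:nodal_cubic}, where $ab$ is a global section of $\on{PP}^2_S$ that does not come from $\Sym^2$ of the global PL functions), so it is not automatic that every such class is divisorial. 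The paper closes this gap with \ref{lem:log_is_divisorial}: one determines the class on a \emph{simple} log blowup and invokes the global generation theorem \ref{thm:global_gen_simple_stacks} to write the piecewise-polynomial globally as a polynomial in piecewise-linear functions, each of which maps under the ring homomorphism $\Phi$ to a class of degree $1$. You would need to supply this argument (or cite that theorem) before your concluding paragraph goes through.
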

By pulling back to $\Mbar_{g,n}$ this proves \cite[Conjecture C]{Molcho2021The-Hodge-bundl}. 

%

\subsection{Strategy of proof}
As with many things in life, our strategy is best illustrated by carrying it out over $\Mbar_{1,2}$. We write $C$ for the universal curve with markings $p_1$, $p_2$, and we let $\ca L = \ca O(2p_1 - 2p_2)$. Then $\DRop(\ca L)$ is invariant under various changes to $\ca L$; these are listed quite exhaustively in \cite[\S 0.6]{Bae2020Pixtons-formula}. In particular, let $D$ be the prime divisor on $C$ given by the rational tails (via the isomorphism $C = \Mbar_{1,3}$ this is the closure of the locus of curves with a single rational tail and all markings on the tail). Then Invariance V of \cite[\S 0.6]{Bae2020Pixtons-formula} states that 
\begin{equation}
\DRop(\ca L) = \DRop(\ca L(D)). 
\end{equation}
Our toehold on $\LogDR$ is obtained by realising that it should satisfy a stronger invariance property, corresponding to twisting by vertical divisors which only exist after blowing up $\Mbar_{1,2}$.  Let $x \in \Mbar_{1,2}$ be the 2-marked 2-gon (\ref{fig:2_gon}), and let $\widetilde{\ca M}_{1,2}$ be the blowup of $\Mbar_{1,2}$ in $x$ (\ref{fig:2_gon_blowup}), with $\tilde C$ the pullback of $C$. The curve $C_x$ has two irreducible components $Y_1$, $Y_2$ (say $Y_1$ carries $p_1$), and the pullbacks of these to $\tilde C$ are prime divisors supported over the exceptional locus of the blowup, which we denote $\tilde Y_i$. We would like to say that $\LogDR$ satisfies the invariance 
\begin{equation}
\LogDR(\ca L) = \LogDR(\ca L(\tilde Y_1)), 
\end{equation}
but this makes no sense because $\tilde Y_1$ is only a Weil divisor, not a Cartier divisor over the `danger' points marked in \ref{fig:2_gon_blowup}. To rectify this we blow up $\tilde C$ quite carefully so that the result $\tilde{\tilde C}$ is still a prestable curve, but now $\tilde Y_1$ is a Cartier divisor on $\tilde{\tilde C}$. Then the invariance
\begin{equation}
\LogDR(\ca L) = \LogDR(\ca L(\tilde Y_1))
\end{equation}
makes sense on $\tilde{\tilde C}$, and is moreover true. 

It is at this point perhaps not clear what we have gained; we have replaced the rather simple bundle $\ca L$ on $C$ by the rather complicated $\ca L(\tilde Y_1)$ on $\tilde{\tilde C}$. The magic is that $\ca L(\tilde Y_1)$ has multidegree $\ul 0$ --- that is, it has degree zero on every irreducible component of every fibre of $\tilde{\tilde C}$ (with the exception of the danger points, which we will sweep under the carpet for now). Now, for a line bundle of multidegree $\ul 0$ the cycle $\LogDR$ is just the pullback of the corresponding $\DRop$ (\ref{lem:invariance_7_a}), and we know the latter to be tautological by Pixton's formula \cite{Bae2020Pixtons-formula}. 

\definecolor{uuuuuu}{rgb}{0.26666666666666666,0.26666666666666666,0.26666666666666666}
\definecolor{qqqqff}{rgb}{0.0,0.0,1.0}
\begin{wrapfigure}[17]{r}{0.5\textwidth} 
  \begin{center}
\begin{tikzpicture}[line cap=round,line join=round,>=triangle 45,x=0.5cm,y=0.5cm]
\clip(-8,-2) rectangle (9.89,9);
\draw (-1.2575567486914516,-0.6094175315700505)-- (3.2947599414603594,-0.5937198878109063);
\draw (-2.6082830683267932,-1.8558013606222377)-- (-0.3470934106610911,1.1644162132132425);
\draw (-0.3470934106610911,1.1644162132132425)-- (4.676152592265047,1.1801138569723866);
\draw (4.676152592265047,1.1801138569723866)-- (2.7139471223720246,-1.8966243198198738);
\draw (2.7139471223720246,-1.8966243198198738)-- (-2.6082830683267932,-1.8558013606222377);
\draw (0.0767429708358017,-1.7710431697467202)-- (2.0703437282471135,0.8033704067529263);
\draw [shift={(3.8597867559711254,5.893383742884459)}] plot[domain=2.0557173218197633:4.240773412522362,variable=\t]({1.0*3.2846227238901546*cos(\t r)+-0.0*3.2846227238901546*sin(\t r)},{0.0*3.2846227238901546*cos(\t r)+1.0*3.2846227238901546*sin(\t r)});
\draw [shift={(-1.1084407902287268,5.674656743995157)}] plot[domain=-1.3335895279112524:1.4727510495010576,variable=\t]({1.0*2.9253667060483504*cos(\t r)+-0.0*2.9253667060483504*sin(\t r)},{0.0*2.9253667060483504*cos(\t r)+1.0*2.9253667060483504*sin(\t r)});
\draw [->] (0.9538423421561176,2.799959044307189) -- (0.9822755814323633,-0.6016939718110028);
\end{tikzpicture}
  \end{center}
  \caption{Curve over $\Mbar_{1,2}$}
  \label{fig:2_gon}
\end{wrapfigure}

\definecolor{ffqqqq}{rgb}{1.0,0.0,0.0}
\definecolor{uuuuuu}{rgb}{0.26666666666666666,0.26666666666666666,0.26666666666666666}
\definecolor{qqqqff}{rgb}{0.0,0.0,1.0}
\begin{wrapfigure}[17]{r}{0.5\textwidth}
  \begin{center}
\begin{tikzpicture}[line cap=round,line join=round,>=triangle 45,x=0.5cm,y=0.5cm]
\clip(-7.219293512707992,-3.65412224507374) rectangle (12.484251285056095,7.5);
\draw (-0.5,-0.1)-- (4.052316690151814,-0.0843023562408544);
\draw (-1.850726319635342,-1.346383829052186)-- (0.41046333803036056,1.6738337447832956);
\draw (0.41046333803036056,1.6738337447832956)-- (5.433709340956501,1.6895313885424388);
\draw (5.433709340956501,1.6895313885424388)-- (3.4715038710634754,-1.3872067882498222);
\draw (3.4715038710634754,-1.3872067882498222)-- (-1.850726319635342,-1.346383829052186);
\draw (0.8342997195272535,-1.2616256381766688)-- (2.8279004769385647,1.3127879383229795);
\draw[line width = 1.3pt] (3.5250519819297237,5.12414667861977)-- (1.2059221114802503,4.200118370862558);
\draw (-1.6007263196353416,3.593616170947813)-- (0.6604633380303611,6.613833744783292);
\draw (0.6604633380303611,6.613833744783292)-- (5.683709340956501,6.629531388542435);
\draw (5.683709340956501,6.629531388542435)-- (3.7215038710634762,3.552793211750177);
\draw (3.7215038710634762,3.552793211750177)-- (-1.6007263196353416,3.593616170947813);
\draw [->] (1.91,2.95) -- (2.0,1.915);
\draw [shift={(-0.6964891103728331,2.17087973421927)}] plot[domain=0.6342953849600048:1.371519404810235,variable=\t]({1.0*3.149076961094006*cos(\t r)+-0.0*3.149076961094006*sin(\t r)},{0.0*3.149076961094006*cos(\t r)+1.0*3.149076961094006*sin(\t r)});
\draw [shift={(4.883917140396212,6.392420826521825)}] plot[domain=3.119040086939489:4.00756136045104,variable=\t]({1.0*2.4103338249347677*cos(\t r)+-0.0*2.4103338249347677*sin(\t r)},{0.0*2.4103338249347677*cos(\t r)+1.0*2.4103338249347677*sin(\t r)});
\begin{scriptsize}
\draw[color=ffqqqq] (-0.4589393493372107,2.6306291128337578) node {danger};
\draw [fill=ffqqqq] (1.5783069721991914,4.348490463805261) circle (1.5pt);
\draw [fill=ffqqqq] (2.9843583942872822,4.908714077293484) circle (1.5pt);
\draw [fill=ffqqqq] (-0.6964891103728331,2.17087973421927) circle (1.5pt);
\draw (5.7238971216853125,2.256941194255474) node {$\Mbar_{1,2}$};
\draw (5.995670153378611,7) node {$\widetilde{\ca M}_{1,2}$};
\draw[line width = 1.3pt] (-1,-3) -- (1,-3);
\draw (4,-3) node {= exceptional locus};
\end{scriptsize}
\end{tikzpicture}
  \end{center}
  \caption{$\widetilde{ \ca M}_{1,2} \to \Mbar_{1,2}$}
    \label{fig:2_gon_blowup}
\end{wrapfigure}

\subsection{Interpretation as a new invariance of $\LogDR$}

Dimitri Zvonkine asked us whether the six invariance properties listed in \cite[\S0.6]{Bae2020Pixtons-formula}, together with knowledge of $\DRop$ for families $C/S$, $\ca L$ of multidegree zero, would be enough to determine $\DRop$ completely. The answer is no, essentially because the invariances in \cite{Bae2020Pixtons-formula} do not allow us to twist by vertical divisors on $C$ coming from non-separating edges. We saw above how to rectify this in the case of $\Mbar_{1,2}$; here we give a more general statement of the new invariance satisfied by $\LogDR$. 

Let $C/S$ be a log curve with $S$ log regular, and $\ca L$ on $C$ a line bundle. We say $C/S$ is \emph{twistable}\footnote{We thank Rahul Pandharipande for suggesting this terminology} if there exists a Cartier divisor $D$ on $C$ supported over the boundary of $S$ and such that $\ca L(D)$ has multidegree $\ul 0$. We write $\LogDR(\ca L)$ for the pullback of $\LogDR$ from $\Pictdz$ along the map $S \to \Pictdz$ induced by $\ca L$, and we write $\DRop(\ca L(D))$  for the pullback of $\DRop$ from $\Pictdz$ along the map $S \to \Pictdz$ induced by $\ca L(D)$. Viewing $\DRop(\ca L(D))$ as an element of $\LogChow(S)$ by pullback, our new invariance states
\begin{equation}\label{eq:intro_invariance}
\LogDR(\ca L) = \DRop(\ca L(D)). 
\end{equation}
That this invariance holds is quite straightforward once the definitions are set up correctly, see \ref{lem:invariance_7_a}. However, there are not enough twistable families that $\LogDR$ is determined by $\DRop$ and the invariance \ref{eq:intro_invariance}; requiring multidegree $\ul 0$ over \emph{every} point in $S$ is too restrictive a condition (e.g. it fails over the `danger' points in $\widetilde{\ca M}_{1,2}$ mentioned above). Because of this we introduce in \ref{def:almost_twistable} a notion of \emph{almost twistable} families. In \ref{lem:invariance_7_b} we show the analogue of \ref{eq:intro_invariance} for almost twistable families, and in \ref{lem:extend_PL_after_alteration} we show that there are enough almost-twistable families to completely determine $\LogDR$ from $\DRop$. 



\subsection{Notation and conventions}

We work with algebraic stacks in the sense of \cite{stacks-project}, and with log structures in the sense of Fontaine-Illusie-Kato, for which we find \cite{Ogus2018Lectures-on-log} and \cite{Kato1989Logarithmic-str} particularly useful general references. The sheaf of monoids on a log scheme (or stack) $X$ is denoted $\M_X$, and the characteristic (or ghost) sheaf is denoted $\ghost_X$, with groupifications $\M_X^\gp$ and $\ghost_X^\gp$. Occasionally we write $\ul X$ for the underlying scheme (or algebraic stack) of $X$. 

We work over a field or Dedekind scheme $\field$ equipped with trivial log structure. We work in the category of fine saturated (fs) log schemes (and stacks) over $\field$. 
In \ref{thm:DDR_tautological_on_Mbar,sec:conjecture_C,sec:logDR_tautological} we assume that $\field$ has characteristic zero, so that we can apply the results of \cite{Bae2020Pixtons-formula}; it is plausible that the results would become false were this assumption omitted. 

A \emph{log algebraic stack} is an algebraic stack equipped with an (fs) log structure. 

We work almost exclusively with operational Chow groups with rational coefficients, as defined in \cite[\S2]{Bae2020Pixtons-formula}, denoted $\CHop$.

\subsection{Acknowledgements}
We are very grateful to 
Younghan Bae, 
Lawrence Barrott, 
Samouil Molcho,
Giulio Orecchia,
Rahul Pandharipande, 
Dhruv Ranganathan, 
Johannes Schmitt, 
Pim Spelier, 
and
Jonathan Wise
for numerous discussions of double ramification cycles on Picard stacks and logarithmic Chow groups. The idea of extending the multiplication formulae in \cite{Holmes2017Multiplicativit} to a $\GL_r(\bb Z)$-invariance property came up in a discussion with Adrien Sauvaget, and was further developed at the AIM workshop on Double ramification cycles and integrable systems. 

The first-named author is very grateful to Alessandro Chiodo for many extensive discussions on computing the double ramification cycle on blowups of $\Mbar_{g,n}$, which provided key motivation and examples. 


Both authors are supported by NWO grant 613.009.103. 

\section{Logarithmic Chow rings}\label{sec:Log_Chow}


\subsection{Logarithmic Chow rings of algebraic stacks}
In this section we make a slight generalisation of some of the ideas from \cite{Holmes2017Multiplicativit}, see also \cite{Molcho2021The-Hodge-bundl}. We work extensively with log schemes (and stacks) which are both regular and log regular; equivalently, with log structures that are induced by normal crossings divisors (see \cite{Nizio2006Toric-singulari}). We make quite some effort in this and other sections to avoid unnecessary separatedness or quasi-compactness assumptions, and to work with algebraic stacks in place of (for example) schemes. This is not (primarily?) due to a particular personal preference, but rather because the objects we consider (such as the stack of log curves, or its universal Picard space) make this essential. 

\begin{definition}[{\cite[Example 4.3]{Adiprasito2018Semistable-redu}}]
A morphism $f\colon X \to Y$ of log algebraic stacks is a \emph{monoidal alteration} if it is proper, log \'etale, and is an isomorphism over the locus in $Y$ where the log structure is trivial. 
\end{definition}
Examples of monoidal alterations are log blowups and root stacks. We expect that every monoidal alteration can be dominated by a composition of log blowups and root stacks, but have not written down a proof. 

\begin{definition}
Let $X$ be an algebraic stack locally of finite type over $\field$. We define $\CHop(X)$ to be the operational Chow group of $X$ with rational coefficients, using finite-type algebraic spaces as test objects, see \cite[\S2]{Bae2020Pixtons-formula} for details. 
\end{definition}

\begin{remark}
If in addition $X$ is smooth and Deligne-Mumford then the intersection pairing induces an isomorphism from the usual Chow ring of $X$ (as defined by Vistoli \cite{Vistoli1989Intersection-th}) to the operational Chow ring $\CHop(X)$. 
\end{remark}

\begin{definition}\label{def:Logch_qcpt}
Let $X$ be a log smooth stack of finite type over $\field$. We define the (operational) log Chow ring of $X$ to be 
\begin{equation}
\LogChow(X) =  \on{colim}_{\tilde X}\CHop(\tilde X), 
\end{equation}
where the colimit runs over monoidal alterations $\tilde X\to X$ with $\tilde X$ smooth over $\field$. 
\end{definition}

\begin{definition}\label{def:determination}
Let $z \in \LogChow(X)$ and let $U \hra X$ be a quasi-compact open. We say the restriction of $z$ to $U$ is \emph{determined} on a monoidal alteration $\tilde U \to U$ if there exists a cycle $z' \in \CHop(\tilde U)$ in the equivalence class of $z$ as defined in the above remark; we then call $z'$ the \emph{determination} of $z$ on $\tilde U$. 
\end{definition} 

\begin{remark}
Because we work with rational coefficients, taking the colimit over log blowups would yield the same operational Chow ring; in particular, our Log Chow ring is the same as that in \cite[\S9]{Holmes2017Multiplicativit}. Throughout the paper we use the possibility of determining a cycle on a (smooth) log blowup without further comment.
%
\end{remark}

\begin{remark}
The idea of allowing monoidal alterations rather than just log blowups was suggested to the authors by Leo Herr. It will play little role in most of the paper, but is absolutely essential in \ref{sec:logDR_tautological}, where it allows us to apply ideas of \cite{Adiprasito2018Semistable-redu} on canonical resolution of singularities. 
\end{remark}

%

\begin{remark}
The ring $\on{colim}_{\tilde X}\CHop(\tilde X)$ can be realised concretely as the disjoint union of the rings $\CHop(\tilde X)$, modulo the equivalence relation where we set cycle $z_1$ on $\tilde X_1$ and $z_2$ on $\tilde X_2$ to be equivalent if there exists a monoidal alteration $\tilde X$ dominating both $\tilde X_1$ and $\tilde X_2$ and on which the pullbacks of $z_1$ and $z_2$ coincide. 
\end{remark}

\subsection{Operations on the logarithmic Chow ring}\label{sec:operations_on_log_chow}
Throughout this subsection $X$ and $Y$ are log smooth stacks of finite type over $\field$. 

\begin{definition}[$\LogChow$ is a $\CHop$-algebra]\label{def:CH_to_LogCH_pullback}
If $\tilde X \to X$ is a log blowup then pullback gives a ring homomorphism $\CHop(X) \to \CHop(\tilde X)$. These assemble into a ring homomorphism $\CHop(X) \to \LogChow(X)$. 
\end{definition}


\begin{definition}[Pullback for $\LogChow$]\label{def:logCH_pullback}Let $f\colon X \to Y$ be a morphism and let $z\in \LogChow(Y)$. Let $\tilde Y \to Y$ be a log blowup on which $z$ is determined, with $\tilde Y$ smooth, and let $\tilde X \to X \times_Y \tilde Y$ be a log blowup which is smooth. Then the composite $\tilde X \to X$ is a log blowup, and $\tilde f \colon \tilde X \to \tilde Y$ is lci, and we have a pullback $\tilde f^!z \in \CHop(\tilde X)$. This class $\tilde f^!z$ is independent of all choices, and the construction yields a ring homomorphism \begin{equation}f^!\colon \LogChow(Y) \to \LogChow(X). \end{equation}\end{definition}

\begin{lemma}
Let $f\colon X \to Y$ be a morphism, then the following diagram commutes: 
\begin{equation}
 \begin{tikzcd}
   \CHop(Y)  \arrow[r] \arrow[d, "f^!"] & \LogChow(Y) \arrow[d, "f^!"]\\
  \CHop(X)  \arrow[r] & \LogChow(X) \\
\end{tikzcd}
\end{equation}
\end{lemma}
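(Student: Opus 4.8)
The plan is to compute both composites on a single common smooth log blowup of $X$ and to reduce the statement to contravariant functoriality of operational pullback around a commutative square.

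First I would fix a class $z \in \CHop(Y)$. Since the pullback $f^!$ on $\LogChow$ of \ref{def:logCH_pullback} is independent of all choices, I am free to pick convenient models: take a smooth log blowup $p_Y\colon \tilde Y \to Y$ (on which the image of $z$ in $\LogChow(Y)$ is automatically determined, by \ref{def:CH_to_LogCH_pullback}) together with a smooth log blowup $\tilde X \to X\times_Y \tilde Y$, and write $p_X\colon \tilde X \to X$ for the resulting log blowup and $\tilde f\colon \tilde X \to \tilde Y$ for the projection. These assemble into a square of underlying stacks commuting on the nose, $p_Y\circ\tilde f = f\circ p_X$, in which $\tilde f$ is lci because $\tilde X$ and $\tilde Y$ are both smooth over $\field$.

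Next I would identify the two determinations on $\tilde X$. Going right then down, the image of $z$ in $\LogChow(Y)$ is determined on $\tilde Y$ by $p_Y^! z$, so by the construction in \ref{def:logCH_pullback} its pullback $f^!$ is determined on $\tilde X$ by $\tilde f^!\, p_Y^! z$. Going down then right, $f^! z \in \CHop(X)$ maps into $\LogChow(X)$ to the class determined on $\tilde X$ by $p_X^!\, f^! z$. Hence it suffices to establish the equality
\[
\tilde f^!\, p_Y^! z \;=\; p_X^!\, f^! z \qquad\text{in } \CHop(\tilde X).
\]
This is precisely contravariant functoriality of operational pullback applied to the two factorisations of the common map $\tilde X \to Y$: one has $(p_Y\circ\tilde f)^! = \tilde f^!\circ p_Y^!$ and $(f\circ p_X)^! = p_X^!\circ f^!$, and these agree because $p_Y\circ\tilde f = f\circ p_X$.

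The only genuine obstacle is the bookkeeping inside the operational framework of \cite[\S2]{Bae2020Pixtons-formula}, over stacks that are neither Deligne--Mumford nor quasi-compact: one must know that operational pullback is contravariantly functorial and, crucially, that the lci Gysin map $\tilde f^!$ entering \ref{def:logCH_pullback} coincides with the operational pullback along $\tilde f$, so that all four arrows are instances of a single contravariantly functorial pullback and the functoriality above applies uniformly. Once this compatibility is recorded the lemma follows formally; I would also note that no fresh independence-of-choices argument is required here, since that point was already settled in \ref{def:logCH_pullback}.
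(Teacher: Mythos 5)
Your argument is correct: the paper states this lemma without proof (evidently regarding it as immediate), and your computation of both composites on a common smooth log blowup $\tilde X$ dominating $X\times_Y\tilde Y$, followed by contravariant functoriality of operational pullback applied to $p_Y\circ\tilde f = f\circ p_X$, is exactly the intended argument. Your closing caveat is also the right one to flag: since all classes in sight are operational and determination produces an operational class on $\tilde Y$, every arrow is an instance of operational pullback, so the functoriality applies uniformly and nothing further is needed.
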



\begin{definition}[Pushforward from $\LogChow$ to $\CHop$]\label{def:pushforward_smooth}
Suppose $X$ is smooth, and let $z \in \LogChow(X)$. Let $\tilde X \to X$ be a log blowup on which $z$ is determined, with $\tilde X$ smooth. Then $\pi\colon \tilde X \to X$ is proper and lci, so we have a proper pushforward on operational Chow $\pi\colon \CHop(\tilde X) \to \CHop(X)$. These assemble into a pushforward 
\begin{equation}
\LogChow(X) \to \CHop(X). 
\end{equation}
\end{definition}

\subsection{Extension to the non-quasi-compact case}

\begin{definition}\label{def:log_chow}Let $X$ be a log smooth log algebraic stack over $\field$ (we no longer assume $X$ to be quasi-compact). Let $\on{qOp}(X)$ denote the category of open substacks of $X$ which are quasi-compact. We define the (operational) log Chow ring of $X$ to be 
\begin{equation}
\LogChow(X) = \on{lim}_{U \in \on{qOp}(X)} \LogChow(U). 
\end{equation}
\end{definition}

\begin{remark}
Morally, we can think of an element of $\LogChow(X)$ as an (operational) cycle on the valuativisation\footnote{See for example \cite{Kato1989Logarithmic-deg}. } of $X$, which can be everywhere-locally represented on some finite log blowup of $X$. In the absence of a good theory of Chow groups of valuativisations of algebraic stacks, we make the above definition. 
\end{remark}

\begin{remark}
All of the constructions of \ref{sec:operations_on_log_chow} carry through to this setting by restricting to suitable quasi-compact opens. We will make use of these extensions without further comment. 
\end{remark}

\section{Tautological subrings of logarithmic Chow rings}\label{sec:tautological_subrings}

In this section we develop a fairly general theory of piecewise-polynomial functions on log algebraic stacks, generalising the theory for toric varieties (for which see \cite{payne2006equivariant} and the references therein). We use these piecewise-polynomial functions to build tautological subrings of the log Chow ring. Once again we need only log blowups in this section, root stacks are unnecessary. 

In the toric case one can hope to realise every element of the Chow ring in terms of piecewise-polynomial functions, which is far from the case in the our context; for example, all piecewise-polynomials functions are zero on a scheme equipped with the trivial log structure, but the Chow ring can be large and interesting. However, in the presence of a non-trivial log structure the piecewise-polynomial functions can still generate many interesting Chow elements. 

The theory in this section was largely developed before we became aware of the related work of Molcho, Pandharipande and Schmitt \cite{Molcho2021The-Hodge-bundl}, where `normally decorated strata classes' approximately correspond to classes coming from our piecewise-polynomial functions. Their approach is probably better for writing formulae for (log) tautological classes, and ours has the advantage that piecewise-polynomials on opens can be glued (which is very useful when working on large algebraic stacks as we do in this paper; as far as we are aware the theory in \cite{Molcho2021The-Hodge-bundl} has so far only been developed for finite-type Deligne-Mumford stacks). 


\subsection{Piecewise polynomial functions}

Let $(X, \ca O_X)$ be a ringed site and $\ca M$ a sheaf of $\ca O_X$-modules. We write $\Sym \ca M$ for the sheafification of the presheaf $U \mapsto \Sym(M(U))$; it is a sheaf of $\ca O_X$-algebras. If $X$ is any site and $\ca A$ a sheaf of abelian groups, then we view $\ca A$ as a sheaf of modules for the constant sheaf of rings $\bb Z$, yielding a sheaf $\Sym \ca A$ of graded $\bb Z$-algebras. 

\begin{example}
If $X$ is a scheme and $\ca A$ is the constant sheaf $\bb Z^n$ of abelian groups, then $\Sym \ca A$ is the constant sheaf $\bb Z[x_1, \dots, x_n]$. 
\end{example}

\begin{definition}
We define the \emph{sheaf of piecewise-polynomial functions} on a log algebraic stack $S$ as
\begin{equation}
\on{PP}_S \coloneqq \Sym \ghost_S^\gp. 
\end{equation}
we write 
\begin{equation}
\on{PP}^n_S = \Sym^n \ghost_S^\gp, 
\end{equation}
for the graded pieces, and \emph{piecewise-linear} functions are
\begin{equation}\label{eq:PL_function}
\on{PP}^1_S = \Sym^1 \ghost_S^\gp = \ghost_S^\gp. 
\end{equation}
\end{definition}

\begin{remark}\leavevmode
\begin{enumerate}
\item
The sheaf $\ghost_S^\gp$ makes sense on the big strict \'etale site of $S$, so the same holds for the sheaf $\on{PP}_S$. 
\item There is natural map $\Sym^n(\ghost_S^\gp(S)) \to \on{PP}_S^n(S)$, but is in general not surjective unless $n=1$, see \ref{eg:nodal_cubic}; this will play a prominent role in what follows. 
\item Given a map of log algebraic stacks $f\colon S' \to S$ there is a natural map $f^*\ghost_S \to \ghost_{S'}$, inducing a natural map of sheaves of $\bb Z$-algebras $f^*\on{PP}_S \to \on{PP}_{S'}$.
\end{enumerate}
\end{remark}

\begin{example}\label{eg:nodal_cubic}
Let $\ul S = \bb P_\field^2$, and let $E$ be an irreducible nodal cubic in $\ul S$, with complement $i\colon U \hra \ul S$. We define $\M_S = i_*\ca O_U$, so that $\ghost_S(S) = \bb N$, and $\Sym(\ghost_S^\gp(S))  = \bb Z[e]$, where $e$ corresponds to the divisor $E$. There is an \'etale chart for $S$ at the singular point of $E$ given by $\field[\Span{a,b}]$ where $a$, $b$ correspond to the two branches of $E$ through the singular point. The image of $\Sym^2(\ghost_S^\gp(S))$ is the free module $\bb Z\Span{(a+b)^2}$. However, there is another global section of $\on{PP}^2_S$ given by $ab$, and in fact $\on{PP}^2_S(S) = \bb Z\Span{(a+b)^2, ab} = \bb Z\Span{a^2+b^2, ab}$. 
\end{example}

\subsection{Simple log algebraic stacks}
\subsubsection{Barycentric subdivision}
If $S$ is a regular log regular log algebraic stack then by \cite[5.2]{Nizio2006Toric-singulari} there exists a unique normal crossings divisor $Z$ on $S$ (the \emph{boundary divisor} of $S$) with complement $i\colon U \to S$ and $\M_S = i_*\ca O_U$. We write the irreducible components of $Z$ as $(D_i)_{i \in I}$. 

If $S$ is a regular log regular atomic\footnote{In the sense of \cite{Abramovich2018Birational-inva}: $S$ has a unique stratum that is closed and connected, and the restriction of the characteristic monoid to this stratum is a constant sheaf.} log scheme then we define the \emph{barycentric ideal sheaf} to be the product 
\begin{equation*}
\prod_{J \sub I} \ca I(\bigcap_{j \in J} D_j), 
\end{equation*}
and the \emph{barycentric subdivision of $S$} to be the blowup of $S$ along the barycentric ideal sheaf. This blowup is stable under strict smooth pullback, defining a barycentric subdivision of any log regular log algebraic stack. A more explicit description can be found in \cite[\S 5.3]{Molcho2021The-Hodge-bundl}

\subsubsection{Simple log algebraic stacks}
\begin{definition}
If $S$ is a regular log regular log algebraic stack with boundary divisor $Z = \bigcup_{i \in I} D_i$, we say $S$ is \emph{simple} if for every $J \sub I$ the fibre product 
\begin{equation}\label{eq:D_J}
D_J \coloneqq \bigtimes_{j \in J,S} D_j
\end{equation}
is regular and the natural map on sets of connected components $\pi_0(D_J) \to \pi_0(S)$ is injective. The closed connected substacks $D_J$ are the \emph{closed strata} of $S$. 
\end{definition}

This condition is more restrictive than requiring the boundary divisor to be a strict normal crossings divisor; consider the union of a line and a smooth conic in $\bb P^2$ meeting at two points, then the intersection is not connected. 

\subsubsection{Simplifying blowups}

\begin{lemma}\label{lem:simplifying_blowups}
Let $S$ be a log regular log algebraic stack. Then there exists a log blowup $\tilde S \to S$ such that $\tilde S$ is simple. 
\end{lemma}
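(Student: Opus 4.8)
The plan is to take $\tilde S \to S$ to be a smoothing of the barycentric subdivision constructed above, and to verify simplicity by translating the defining conditions into the combinatorics of the associated cone complex. Since $S$ is log regular, by \cite{Nizio2006Toric-singulari} its log structure is that of a toroidal embedding, so $S$ carries an associated (generalised) cone complex $\Sigma_S$ whose cones encode the local characteristic monoids and whose face maps record how the boundary strata are glued. Log blowups of $S$ correspond to subdivisions of $\Sigma_S$, and under this dictionary the conditions in the definition of simplicity become purely combinatorial: regularity and the strict-normal-crossings property of $\tilde S$ hold exactly when every cone of the subdivision is smooth (unimodular), while the injectivity of each $\pi_0(D_J) \to \pi_0(\tilde S)$ holds exactly when the subdivided complex is a \emph{genuine} simplicial complex --- no cone glued to itself and every pair of cones meeting along a single common face. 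It is essential to argue on atomic \'etale charts, where a self-intersecting boundary component such as the nodal cubic of \ref{eg:nodal_cubic} is resolved into two distinct branches, so that $\Sigma_S$ faithfully records the local geometry.

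Next I would apply the barycentric subdivision. The key combinatorial fact is that the barycentric subdivision of any generalised cone complex is a genuine simplicial complex: its cones are indexed by flags $\tau_0 \subsetneq \cdots \subsetneq \tau_k$ of cones of $\Sigma_S$, distinct flags produce distinct cones, and a common face corresponds to a common sub-flag. Thus within any connected component a given multi-intersection is carried by at most one cone, so $\pi_0(D_J) \to \pi_0(\tilde S)$ becomes injective for every $J$. Because the barycentric subdivision is defined atomic-locally and glued along strict smooth pullback, it simultaneously separates self-intersecting components and disconnects multi-point intersections, for instance splitting apart the two intersection points in the line-and-conic example following the definition of simplicity.

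It remains to achieve smoothness, which barycentric subdivision does not by itself provide: its cones are simplicial but generally not unimodular, so $\tilde S$ may still carry quotient singularities and the $D_J$ need not yet be regular. I would therefore compose with a smoothing subdivision that refines each simplicial cone into smooth subcones by a canonical rule depending only on the cone (so the refinements agree on shared faces and glue globally, cf. \cite{Adiprasito2018Semistable-redu}); this is again a log blowup, and since it only subdivides within the cones of an already genuine simplicial complex it introduces no new gluings. The composite log blowup is then regular and log regular with strict-normal-crossings boundary and injective $\pi_0(D_J) \to \pi_0(\tilde S)$, hence simple.

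The main obstacle is the $\pi_0$-injectivity: unlike regularity and the strict-normal-crossings property, it is a genuinely global condition --- it fails already for smooth strict-normal-crossings boundary, as the line-and-conic example shows --- and taming it is exactly the purpose of the barycentric subdivision. The two points requiring care are that the atomic-local barycentric subdivisions glue to the claimed global genuine-simplicial-complex structure, and that the subsequent smoothing step preserves this structure rather than reintroducing identifications along shared faces.
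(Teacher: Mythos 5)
Your overall framework (translating simplicity into combinatorics of the generalized cone complex, and composing a barycentric subdivision with a face-compatible smoothing subdivision) is reasonable, but the central combinatorial claim is false: the \emph{first} barycentric subdivision of a generalized cone complex is not in general a genuine simplicial complex in the strong sense you need. It is true that its cones are indexed by flags and that distinct flags give distinct cones, but two distinct cones can still share the \emph{same set of rays}, and hence meet along more than one common face; this is exactly what breaks the injectivity of $\pi_0(D_J)\to\pi_0(\tilde S)$. The paper's own \ref{eg:nodal_cubic} is a counterexample to your claim: the cone complex of $(\bb P^2, E)$ with $E$ an irreducible nodal cubic has one ray $\rho$ and one two-dimensional cone $\sigma$ with both of its faces glued to $\rho$. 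The barycentric subdivision has rays $b_\rho$ and $b_\sigma$ and \emph{two} distinct two-dimensional cones, each with ray set $\{b_\rho, b_\sigma\}$ (one for each of the two flags $\rho\subsetneq\sigma$). Geometrically this is the blowup of the node: the strict transform of $E$ and the exceptional divisor are smooth but meet in two points --- precisely the non-simple line-and-conic configuration described after the definition of simplicity. So after one barycentric subdivision you have achieved strict normal crossings, but not simplicity. (This is the same phenomenon as the classical fact that the first barycentric subdivision of a $\Delta$-complex need not be a simplicial complex, while the second one is.)

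The repair is to barycentrically subdivide \emph{twice}, which is what the paper does: its proof first applies the functorial resolution of \cite{Illusie2014Gabbers-modific} to make $S$ regular and log regular (needed because the paper's barycentric subdivision is defined via the ideal sheaves of the boundary components $D_j$, which requires the boundary to be a normal crossings divisor), then observes that one barycentric subdivision produces strict normal crossings boundary and a second one produces simplicity. Your remaining points --- that a canonical, face-compatible unimodular refinement of an honest simplicial cone complex glues and introduces no new identifications --- are fine, but as written your argument stops one subdivision short of the conclusion.
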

The proof consists of three observations:
\begin{enumerate}
\item
By \cite{Illusie2014Gabbers-modific} there exists a \emph{functorial} resolution algorithm for log regular log schemes, hence there exists a log blowup of $S$ which is regular and log regular;
\item If $S$ is regular log regular then the barycentric subdivision has strict normal crossings boundary (i.e. the substacks $D_J$ of \ref{eq:D_J} are regular);
\item If S is regular log regular with strict normal crossings boundary then the barycentric subdivision is simple. 
\end{enumerate}

\subsubsection{Global generation on simple log schemes}
In this section and the next we prove the key technical result on piecewise-polynomial functions on log schemes and stacks. The version for stacks implies that for schemes, but the proof is a little fiddly, so for expository reasons we treat the case of schemes first (the proofs in the two cases are similar). 

If $S$ is a simple log stack with irreducible boundary divisors $D_b$, we write $D_S$ for the sheafification of the presheaf on the small Zariski site $S_{Zar}$ associating to an open $U \hra S$ the free abelian group on those $D_b$ such that $D_b \cap U \neq \emptyset$. On connected opens the values of the sheaf and the presheaf coincide. 

\begin{theorem}\label{thm:simple_global_gen_sch}
Let $S$ be a quasi-compact simple log scheme, and let $n \in \bb Z_{\ge 0}$. Then the natural map of $\bb Z$-modules 
\begin{equation}
\Sym^n(\ghost_S^\gp(S)) \to (\Sym^n\ghost_S^\gp)(S)
\end{equation}
is surjective. 
\end{theorem}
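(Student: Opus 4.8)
The plan is to translate the statement into a purely combinatorial surjectivity about polynomials indexed by the poset of boundary strata, the crucial point being that on a \emph{simple} log scheme the strata are globally labelled by subsets of $I$ and are connected. First I would reduce to the case that $S$ is connected (hence, being regular, integral), since $S$ is quasi-compact and both sides commute with the finite decomposition into connected components. On a connected $S$ simplicity forces each closed stratum $D_J = \bigcap_{j\in J} D_j$ to be empty or irreducible: it is regular, and $\pi_0(D_J)\to\pi_0(S)$ is injective with $\pi_0(S)$ a point. Write $\Delta = \{J\sub I : D_J\neq\emptyset\}$ and note this is downward closed, i.e. a simplicial complex. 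Each global divisor $D_i$ gives a global section of $\ghost_S^\gp$, so the monomials $\prod_i D_i^{a_i}$ genuinely lie in $\Sym^n(\ghost_S^\gp(S))$; this is the only structural fact about the source that I will use.

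Next I would set up a combinatorial description of the target. For $J\in\Delta$ let $\eta_J$ be the generic point of $D_J$; there exactly the divisors $D_j$, $j\in J$, pass through (any further intersection with $D_m$ has strictly smaller dimension by the normal-crossings regularity of a simple scheme), so $\ghost_{S,\eta_J}^\gp\cong\bb Z^J$ and the germ at $\eta_J$ of a section $\alpha\in(\Sym^n\ghost_S^\gp)(S)$ is a homogeneous degree-$n$ polynomial $Q_J$ in the variables $\{x_j\}_{j\in J}$. Since $\ghost_S^\gp$ is locally constant along the open stratum $D_J^\circ$, which is irreducible by simplicity, the germ of $\alpha$ is constantly $Q_J$ on $D_J^\circ$; as every point of $S$ lies in a unique such open stratum, $\alpha$ is determined by the family $(Q_J)_{J\in\Delta}$. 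For $J'\sub J$ the point $\eta_{J'}$ generises $\eta_J$, and the generisation map $\bb Z^J\to\bb Z^{J'}$ is the projection killing the coordinates in $J\setminus J'$, so the family satisfies $Q_{J'} = Q_J|_{x_j=0\,(j\in J\setminus J')}$; conversely any such compatible family glues to a section.

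Finally I would carry out the combinatorial surjectivity. Writing $Q_J = \sum_{\on{supp}(\bd a)\sub J} q_{J,\bd a}\,x^{\bd a}$, the compatibility forces $q_{J,\bd a}$ to be independent of $J$ whenever $\on{supp}(\bd a)\in\Delta$, since any two admissible $J$ both restrict to $\on{supp}(\bd a)\in\Delta$. Hence there is a well-defined $q_{\bd a}$ for each $\bd a$ with $\on{supp}(\bd a)\in\Delta$, and I would set
\[
P \;=\; \sum_{\on{supp}(\bd a)\in\Delta} q_{\bd a}\prod_{i} D_i^{a_i}\;\in\;\Sym^n(\ghost_S^\gp(S)).
\]
The germ of $\prod_i D_i^{a_i}$ at $\eta_J$ is $x^{\bd a}$ if $\on{supp}(\bd a)\sub J$ and $0$ otherwise, so, using that $\Delta$ is downward closed, the germ of $P$ at $\eta_J$ equals $Q_J$ for every $J\in\Delta$. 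By the previous paragraph $P$ maps to $\alpha$, which proves surjectivity.

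I expect the main obstacle to be the middle step: rigorously identifying global sections of the sheafified symmetric power $\Sym^n\ghost_S^\gp$ with compatible families $(Q_J)$ of polynomials on the generic points of strata. This is exactly where simplicity is indispensable — it guarantees that the strata are connected and globally indexed by subsets of $I$, with no self-intersections or monodromy of the kind exhibited by the nodal cubic in \ref{eg:nodal_cubic} — and it is the source of the \textbf{fiddliness}: one must check that $\ghost_S^\gp$ is locally constant along each open stratum and that the generisation maps are the expected coordinate projections. Once this dictionary is in place, the concluding combinatorial argument is entirely formal.
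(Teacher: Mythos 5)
Your proof is correct, but it takes a genuinely different route from the paper's. You compute both sides of the map explicitly against the stratification: sections of $\Sym^n\ghost_S^\gp$ become compatible families $(Q_J)_{J\in\Delta}$ of polynomials at the generic points of the (irreducible, globally indexed) strata, and surjectivity is then the elementary observation that compatibility pins down a single coefficient $q_{\bd a}$ for each monomial with support in $\Delta$, yielding an explicit global preimage $P$. The paper instead presents the target as a quotient of the constant sheaf $\bb S$ on $\Sym^n(\ghost_S^\gp(S))$ with kernel $K$, and reduces surjectivity to the vanishing $H^1_{\et}(S,K)=0$, proved by an ordered \v{C}ech computation on a finite atomic cover; the combinatorial heart there is likewise the irreducibility of the $D_J$. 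What your argument buys is an explicit description of $(\Sym^n\ghost_S^\gp)(S)$ (hence of both the image and the kernel) and the avoidance of any cohomology; what the paper's buys is that the \v{C}ech argument transports almost verbatim to algebraic stacks via strict smooth atomic covers (\ref{thm:global_gen_simple_stacks}), whereas your dictionary via generic points of strata and connectedness of the open strata $D_J^\circ$ would need some extra care in the stacky setting. Two small points on the step you flagged as fiddly: you do not actually need the converse direction that every compatible family glues, only that a section is determined by its $(Q_J)$; and the facts you need from simplicity --- that each nonempty $D_J$ is irreducible, that $D_J^\circ$ is a nonempty connected dense open containing $\eta_J$ (a finite union of the strictly smaller $D_{J'}$ cannot cover the irreducible $D_J$, finiteness of $I$ coming from quasi-compactness), and that $\ghost_S^\gp$ is canonically the constant sheaf $\bb Z^J$ along $D_J^\circ$ because each regular $D_j$ contributes exactly one branch --- all hold and are exactly what fails for the nodal cubic of \ref{eg:nodal_cubic}.
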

In other words, $\on{PP}_S(S)$ is just the symmetric algebra on $\ghost_S(S)$; every global piecewise-polynomial function can be written \emph{globally} as a polynomial in piecewise-linear functions. 
\begin{proof}
To simplify notation we assume $S$ connected (hence irreducible). For $n=0$ and $n=1$ the result is obvious. We write $\bb S$ for the constant sheaf on the $\bb Z$-algebra $\Sym^n(\ghost_S^\gp(S))$. Restriction gives a natural surjection
\begin{equation}
\phi\colon \bb S \to \Sym(\ghost_S^\gp)
\end{equation}
whose kernel we denote $K$, a sheaf of sub-$\bb Z$-modules of $\bb S$. Fixing an ordering on the index set $B$ of the $D_b$, we identify $\bb S$ with the constant sheaf on the free abelian group on monomials in the $D_b$. If $U \sub S$ is a connected (equivalently, non-empty) open then $K(U)$ is the free abelian group on monomials $\prod_{a \in A} D_a$ such that $\cap_{a \in A} D_a \cap U = \emptyset$. 

To prove the theorem it suffices to show that $H_\et^1(S, K) = 0$. For this we choose a finite Zariski cover $\ca U = (U_i)_I$ of $S$ by atomic log schemes (this exists because we assume $S$ simple). These $U_i$ are connected and $S$ is irreducible, so all intersections $U_{ij}$, $U_{ijk}$, ... among $U_i$ are also connected. 
The $U_i$ are evidently acyclic for $K$, so it is enough to prove vanishing of the Cech cohomology group $H^1_\ca U(S, K)$. We consider the relevant piece of the ordered Cech complex $\check{C}_{ord}^\bullet$
\begin{equation*}
\prod_{i \in I} K(U_i) \stackrel{d_1}{\longrightarrow} \prod_{i < j \in I} K(U_{ij}) \stackrel{d_2}{\longrightarrow}  \prod_{i < j < k \in I} K(U_{ijk}). 
\end{equation*}
Since $K$ is a subsheaf of a constant sheaf of free abelian groups, the kernel of $d_2$ is generated by elements $s$ where, for some triple of indices $i_0 < j_0 < k_0 \in I$, we have
\begin{itemize}
\item $s_{ij} = 0$ unless $(i,j) = (i_0,j_0)$ or $(i,j) = (j_0,k_0)$;
\item
$s_{i_0j_0}$ and $s_{j_0k_0}$ map to the same monomial in $K(U_{i_0j_0k_0})$. 
\end{itemize}
Choose such an element $s$, where to simplify the notation we assume $i_0 = 1,$ $j_0 = 2,$ $k_0 = 3$. Again remembering that $K$ is a subsheaf of the constant sheaf $\bb S$ and that $U_{123}$ is connected, we can assume that $s_{12}$ and $s_{2,3}$ are both given by the monomial $\prod_{a \in A} D_a$. Then necessarily 
\begin{equation*}
\bigcap_{a \in A} D_a \cap U_{12} = \emptyset = \bigcap_{a \in A} D_a \cap U_{23} 
\end{equation*}
and
\begin{equation}
\bigcap_{a \in A} D_a \cap U_{ij} \neq \emptyset \text{ unless } (i,j) = (1,2) \text{ or } (i,j) = (2,3). 
\end{equation}
 Further, since $\bigcap_{a \in A} D_a$ is irreducible\footnote{If it is empty there is nothing to check.} we see that at least one of 
\begin{equation}
\bigcap_{a \in A} D_a \cap U_1 \text{ and } \bigcap_{a \in A} D_a \cap U_2
\end{equation}
is empty, and at least one of 
\begin{equation}
\bigcap_{a \in A} D_a \cap U_2 \text{ and } \bigcap_{a \in A} D_a \cap U_3
\end{equation}
is empty. Hence if $\bigcap_{a \in A} D_a \cap U_2 \neq \emptyset$ then both 
\begin{equation}
\bigcap_{a \in A} D_a \cap U_1 \text{ and } \bigcap_{a \in A} D_a \cap U_3
\end{equation}
are empty, hence $\bigcap_{a \in A} D_a \cap U_{13} = \emptyset$, a contradiction. We see that the element $s' \in \prod_{i \in I} K(U_i)$ given by $s'_2 = \prod_{a \in A} D_a$ and $s'_i = 0 $ for $i \neq 2$ has $d_1(s') = s$ as required. 
\end{proof}

\subsubsection{Global generation on simple log algebraic stacks}

We now prove the analogous result for log algebraic stacks. We strongly encourage the reader to skip the proof; it is almost identical to that for schemes, except that we have to work with smooth covers. 


\begin{theorem}\label{thm:global_gen_simple_stacks}
Let $S$ be a quasi-compact simple log algebraic stack, and let $n \in \bb Z_{\ge 0}$. Then the natural map of $\bb Z$-modules 
\begin{equation}
\Sym^n(\ghost_S^\gp(S)) \to (\Sym^n\ghost_S^\gp)(S)
\end{equation}
is surjective. 
\end{theorem}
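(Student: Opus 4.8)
The plan is to reduce the statement for algebraic stacks to the already-proven case for schemes (\ref{thm:simple_global_gen_sch}), using a smooth presentation. First I would choose a smooth surjective morphism $p\colon S_0 \to S$ from a quas-compact simple log scheme $S_0$, where the log structure on $S_0$ is the pullback of that on $S$. The existence of such a presentation follows from the fact that $S$ is a quasi-compact algebraic stack (so it admits a smooth atlas by a scheme) together with \ref{lem:simplifying_blowups}-type manipulations ensuring we may take the atlas to be simple; since $p$ is strict, we have $p^*\ghost_S^\gp \cong \ghost_{S_0}^\gp$, and hence $p^*\on{PP}_S^n \cong \on{PP}_{S_0}^n$.

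The key idea is to express global sections on $S$ as equalizers of global sections on the two levels of the simplicial presentation. Writing $S_1 = S_0 \times_S S_0$ with its two projections $\pi_1, \pi_2\colon S_1 \to S_0$ (again a simple log scheme after a further simplifying blowup if necessary), the sheaf $\Sym^n \ghost_S^\gp$ being a (strict \'etale, or smooth) sheaf means that its global sections fit into an exact sequence
\begin{equation*}
(\Sym^n \ghost_S^\gp)(S) \to (\Sym^n \ghost_{S_0}^\gp)(S_0) \rightrightarrows (\Sym^n \ghost_{S_1}^\gp)(S_1).
\end{equation*}
The same descent holds for the constant-coefficient object $\Sym^n(\ghost^\gp(-))$ viewed appropriately. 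Then I would invoke \ref{thm:simple_global_gen_sch} for both $S_0$ and $S_1$ to know that on each level the natural map from the symmetric power of global piecewise-linear functions is surjective, and chase the resulting commutative diagram: given a global section $t$ of $\Sym^n\ghost_S^\gp$ over $S$, its pullback $p^*t$ to $S_0$ lifts to an element of $\Sym^n(\ghost_{S_0}^\gp(S_0))$, and I must show this lift can be chosen to descend, i.e. to come from $\Sym^n(\ghost_S^\gp(S))$.

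The main obstacle will be precisely this descent of the lift. Surjectivity on each level gives a preimage over $S_0$, but a priori different lifts over $S_0$ need not agree after pullback along $\pi_1$ and $\pi_2$, so the naive lift need not be in the image of $\Sym^n(\ghost_S^\gp(S))$. The clean way to handle this is to run the Cech/cohomological argument of \ref{thm:simple_global_gen_sch} directly on the stack, replacing the Zariski cover by atomic log schemes with a smooth cover by atomic log schemes and checking that the geometric inputs of that proof survive: namely that the strata $D_J$ remain irreducible (guaranteed by simplicity, since $\pi_0(D_J) \to \pi_0(S)$ is injective), that the covering pieces and their overlaps remain connected, and that the kernel sheaf $K$ is again a subsheaf of a constant sheaf of free abelian groups so that its first cohomology vanishes by the same combinatorial argument. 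This is why the authors remark that the proof is ``almost identical'' to the scheme case; the essential new point is verifying that connectivity and irreducibility of strata are preserved under a smooth (rather than Zariski) cover, which holds because simplicity was built precisely to guarantee injectivity of $\pi_0(D_J) \to \pi_0(S)$. I would therefore carry out the Cech vanishing $H^1(S, K) = 0$ in the smooth topology verbatim, noting only the points where smoothness rather than openness is used.
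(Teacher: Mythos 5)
Your final plan coincides with the paper's proof: the authors also discard any descent along a simplicial presentation and instead run the \v{C}ech vanishing $H^1(S,K)=0$ for a smooth cover of $S$ by atomic log schemes, using irreducibility of the strata and connectedness of all multi-overlaps exactly as in \ref{thm:simple_global_gen_sch}. So the first half of your proposal is a detour you yourself abandon, and the second half is the paper's argument.

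One point is under-justified, and it is precisely the point you wave at. You assert that connectedness of the covering pieces and their overlaps ``holds because simplicity was built precisely to guarantee injectivity of $\pi_0(D_J)\to\pi_0(S)$.'' Simplicity of $S$ does not by itself give this for an arbitrary smooth cover by atomic log schemes: the fibre products $U_i\times_S U_j$ can be disconnected even when $f(U_i)\cap f(U_j)$ is connected (think of an \'etale double cover), and if $U_{ij}$ is disconnected the identification of $K(U_{ij})$ with the free abelian group on monomials $\prod_{a\in A}D_a$ satisfying $\bigcap_a D_a\cap f(U_{ij})=\emptyset$ breaks down, as does the step where irreducibility of $\bigcap_a D_a$ forces one of $\bigcap_a D_a\cap f(U_1)$, $\bigcap_a D_a\cap f(U_2)$ to be empty. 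The paper secures both properties by a specific device: it first chooses a strict smooth atlas $S'\to S$ and \emph{shrinks it so that the fibre of $S'$ over the generic point of every stratum of $S$ is connected}; this makes $S'$ simple, makes all fibre products of the induced atomic cover connected, and yields the compatibility between intersections of images and images of intersections (property (2) in the paper's proof) that replaces the Zariski identity $U_1\cap U_2=U_{12}$ in the combinatorial step. Your write-up needs this (or an equivalent) choice of atlas made explicit; with it, the rest goes through verbatim as you say.
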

\begin{proof}
As in the proof of \ref{thm:simple_global_gen_sch} we assume $S$ connected, we write $\bb S$ for the constant sheaf on the $\bb Z$-algebra $\Sym^n(\ghost_S^\gp(S))$, and 
\begin{equation}
\phi\colon \bb S \to \Sym(\ghost_S^\gp)
\end{equation}
for the natural surjection whose kernel we denote $K$, a sheaf of sub-$\bb Z$-modules of $\bb S$. Fixing an ordering on the index set $B$ of the $D_b$, we identify $\bb S$ with the constant sheaf on the free abelian group on monomials in the $D_b$. If $f\colon U \to S$ is a smooth map from a connected scheme then $K(U)$ is the free abelian group on monomials $\prod_A D_a$ such that $\cap_A D_a \cap f(U) = \emptyset$. 

To prove the theorem it suffices to show that $H_{sm}^1(S, K) = 0$. For this we first choose a strict smooth map $f\colon S' \to S$ from a log scheme. Shrinking $S'$, we may assume that \emph{the fibre of $S'$ over the generic point of any stratum of $S$ is connected}. This condition has two crucial consequences:
\begin{enumerate}
\item $S'$ is simple; 
\item If $(U_i)_{i \in I}$ is any collection of opens of $S'$, and $A \sub B$ any subset, then
\begin{equation}
\bigcap_{a\in A} D_a \cap \bigcap_{i \in I} f(U_i) \neq \emptyset
\end{equation}
if and only if 
\begin{equation}
\bigcap_{a\in A} D_a \cap f\left(\bigcap_{i \in I} U_i\right) \neq \emptyset. 
\end{equation}
\end{enumerate}

We choose a Zariski cover of $S'$ by atomic schemes, inducing a smooth cover $\ca U = (U_i)_I$ of $S$ by atomic log schemes. This cover has the property that the fibre product of any number of the $U_i$ over $S$ is connected.

The proof now proceeds exactly as in the case when $S$ was a scheme (\ref{thm:simple_global_gen_sch}), though we write the details for completeness. The $U_i$ are acyclic for $K$, so it is enough to prove vanishing of the Cech cohomology group $H^1_\ca U(S, K)$. The relevant piece of the ordered Cech complex $\check{C}_{ord}^\bullet$ is
\begin{equation*}
\prod_{i \in I} K(U_i) \stackrel{d_1}{\longrightarrow} \prod_{i < j \in I} K(U_{ij}) \stackrel{d_2}{\longrightarrow}  \prod_{i < j < k \in I} K(U_{ijk}). 
\end{equation*}
Since $K$ is a subsheaf of a constant sheaf of free abelian groups, the kernel of $d_2$ is generated by elements $s$ where, for some triple of indices $i_0 < j_0 < k_0 \in I$, we have
\begin{itemize}
\item $s_{ij} = 0$ unless $(i,j) = (i_0,j_0)$ or $(i,j) = (j_0,k_0)$;
\item
$s_{i_0j_0}$ and $s_{j_0k_0}$ map to the same monomial in $K(U_{i_0j_0k_0})$. 
\end{itemize}
Choose such an element $s$, where to simplify the notation we assume $i_0 = 1,$ $j_0 = 2,$ $k_0 = 3$. Suppose that $s_{12}$ and $s_{2,3}$ are given by the monomial $\prod_{a \in A} D_a$, so necessarily 
\begin{equation*}
\bigcap_{a \in A} D_a \cap f(U_{12}) = \emptyset = \bigcap_{a \in A} D_a \cap f(U_{23}) 
\end{equation*}
and
\begin{equation}
\bigcap_{a \in A} D_A \cap f(U_{ij}) \neq \emptyset \text{ unless } (i,j) = (1,2) \text{ or } (i,j) = (2,3)
\end{equation}
by injectivity of the restriction maps. Then we apply property (2) above to see that at least one of
\begin{equation}
\bigcap_{a \in A} D_a \cap f(U_1) \text{ and } \bigcap_{a \in A} D_a \cap f(U_2)
\end{equation}
is empty, and at least one of 
\begin{equation}
\bigcap_{a \in A} D_a \cap f(U_2) \text{ and } \bigcap_{a \in A} D_a \cap f(U_3)
\end{equation}
is empty. Hence if $\bigcap_{a \in A} D_a \cap f(U_2) \neq \emptyset$ then both 
\begin{equation}
\bigcap_{a \in A} D_a \cap f(U_1) \text{ and } \bigcap_{a \in A} D_a \cap f(U_3)
\end{equation}
are empty, hence $\bigcap_{a \in A} D_a \cap f(U_{13}) = \emptyset$, a contradiction. We see that the element $s' \in \prod_{i \in I} K(U_i)$ given by $s'_2 = \prod_{a \in A} D_a$ and $s'_i = 0 $ for $i \neq 2$ has $d_1(s') = s$ as required. 
\end{proof}

\subsection{Map to the Chow group}
\subsubsection{Map on divisors}
For an algebraic stack $X$, we write $\on{Div}(X)$ for the monoid of isomorphism classes of pairs $(\ca L, \ell)$ where $\ca L$ is a line bundle on $X$ and $\ell \in \ca L(X)$ a section, with monoid operation given by tensor product. 

Let $S$ be a log algebraic stack and $m \in \ghost_S(S)$. The preimage $\ca O_S(-m)^\times$ of $m$ in $\M_S$ is an $\ca O_S^\times$-torsor and the log structure equips it with a map to $\ca O_S(-m)^\times \to \ca O_S$. This map admits a unique $\ca O_S^\times$-equivariant extension to a map of line bundles $\ca O_S(-m) \to \ca O_S$, where we built $\ca O_S(-m)$ from $\ca O_S(-m)^\times$ by filling in the zero section. Dualising gives a map $\ca O_S \to \ca O_S(m) \coloneqq \ca O_S(-m)^\vee$, and the image of the section $1$ of $\ca O_S$ gives a section of $\ca O_S(m)$. This defines a map 
\begin{equation*}
\ca O_S(-)\colon \ghost_S(S) \to \on{Div}(S). 
\end{equation*}
This can be upgraded to a monoidal functor of fibred symmetric monoidal categories, see \cite[\S 3.1]{Borne2012Parabolic-sheav}. Taking the (operational) first chern class yields a group homomorphism
\begin{equation}\label{eq:map_to_divisors}
\Phi^1\colon \ghost_S^\gp(S) \to \CHop^1(S), 
\end{equation}
with image contained in the subgroup generated by Cartier divisors. 


\subsubsection{The case of simple finite-type stacks}
Let $S$ be a simple log algebraic stack, smooth\footnote{If $\field$ is a field of characteristic zero then being smooth is here equivalent to being locally of finite type (since simple implies regular). } over $\field$. 
Since $S$ is regular, the intersection pairing equips the Chow group $\CHop(S)$ with a commutative ring structure. As such, the map 
\begin{equation}
\Phi^1\colon \ghost_S(S) \to \CHop^1(S)
\end{equation}
of \ref{eq:map_to_divisors} extends uniquely to a ring homomorphism 
\begin{equation}
\Phi'\colon \Sym(\ghost_S(S)) \to \CHop(S). 
\end{equation}
Since 
\begin{equation}
\Sym^n(\ghost_S^\gp(S)) \to (\Sym^n\ghost_S^\gp)(S)
\end{equation}
is surjective, and any element of the kernel evidently maps to $0$ in $\CHop(S)$, this map $\Phi'$ descends to a unique ring homomorphism
\begin{equation}
\Phi\colon (\Sym\ghost_S)(S) = \on{PP}_S(S) \to \CHop(S), 
\end{equation}
whose degree 1 part is $\Phi^1$. 

\subsubsection{The case of log smooth finite-type stacks}
Let $S$ be a quasi-compact log smooth log algebraic stack over $\field$. By \ref{lem:simplifying_blowups} there exists a log blowup $\pi\colon \tilde S \to S$ with $\tilde S$ simple. We define 
\begin{equation}
\Phi_S\colon (\Sym\ghost_S)(S) = \on{PP}_S(S) \to \CHop(S)
\end{equation}
as the composite
\begin{equation}
(\Sym\ghost_S)(S) \to \Sym\ghost_{\tilde S}(\tilde S) \stackrel{\Phi_{\tilde S}}{\longrightarrow} \CHop(\tilde S) \stackrel{\pi_*}{\longrightarrow} \CHop(S). 
\end{equation}
\Cref{lem:simplifying_blowups} actually yields a \emph{canonical} choice of log blowup $\pi$, but we should still check that the map $\Phi_S$ is independent of the choice of $\pi$ (for example, if $S$ was already simple, we don't want to have changed the map by blowing up).  
\begin{lemma}\label{lem:blowup_of_simple}
Let $\pi\colon \tilde S \to S$ be a log blowup with $S$ and $\tilde S$ simple. The diagram
\begin{equation}
 \begin{tikzcd}
  \on{PP}_{\tilde S}(\tilde S) \arrow[r, "\Phi_{\tilde S}"] & \CHop(\tilde S) \arrow[d, "\pi_*"] \\
  \on{PP}_S(S) \arrow[u, "\pi^*"] \arrow[r, "\Phi_S"] & \CHop(S)
\end{tikzcd}
\end{equation}
commutes. 
\end{lemma}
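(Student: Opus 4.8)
The plan is to reduce commutativity to the degree-one (piecewise-linear) generators, while taking care that the pushforward $\pi_*$ is \emph{not} a ring homomorphism. First I would record the inputs from the simple case. Since $S$ and $\tilde S$ are simple, \ref{thm:global_gen_simple_stacks} identifies $\on{PP}_S(S)$ with the polynomial ring $\Sym(\ghost_S^\gp(S))$, and likewise for $\tilde S$; moreover $\Phi_S$ and $\Phi_{\tilde S}$ were constructed precisely as the ring homomorphisms extending the chern-class maps $\Phi^1$ on piecewise-linear functions. Consequently any ring homomorphism out of $\on{PP}_S(S)$ is determined by its restriction to $\ghost_S^\gp(S)$, and the same holds on $\tilde S$.

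The key intermediate step is the identity $\Phi_{\tilde S}\circ\pi^* = \pi^*\circ\Phi_S$ as maps $\on{PP}_S(S) \to \CHop(\tilde S)$, where on the right $\pi^*$ denotes the (ring homomorphism) pullback on operational Chow. Both composites are ring homomorphisms: the left because $\pi^*\colon \on{PP}_S \to \on{PP}_{\tilde S}$ and $\Phi_{\tilde S}$ are, and the right because $\Phi_S$ and $\pi^*\colon\CHop(S)\to\CHop(\tilde S)$ are. Hence it suffices to check the identity on a piecewise-linear function $m \in \ghost_S^\gp(S)$. There $\Phi_{\tilde S}(\pi^* m) = c_1(\ca O_{\tilde S}(\pi^* m))$ while $\pi^*\Phi_S(m) = \pi^* c_1(\ca O_S(m)) = c_1(\pi^*\ca O_S(m))$, and these agree because the functor $\ca O(-)$ is compatible with pullback, i.e. $\ca O_{\tilde S}(\pi^* m) \cong \pi^*\ca O_S(m)$ (see \cite[\S 3.1]{Borne2012Parabolic-sheav}).

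To conclude, I would use the previous step to rewrite $\pi_*\bigl(\Phi_{\tilde S}(\pi^* p)\bigr) = \pi_*\bigl(\pi^*\Phi_S(p)\bigr)$ for every $p \in \on{PP}_S(S)$, and then invoke $\pi_*\circ\pi^* = \on{id}_{\CHop(S)}$. This last identity is where the real content lies: by the projection formula $\pi_*(\pi^*\alpha) = \alpha\cdot\pi_*(1)$, so it reduces to $\pi_*(1)=1$. Since $\pi$ is a log blowup it is proper and an isomorphism over the dense open where the log structure is trivial, hence birational of degree one between smooth (in particular integral, equidimensional) stacks; therefore it pushes the fundamental class to the fundamental class, giving $\pi_*(1)=1$. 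Combining the two displays yields $\pi_*\bigl(\Phi_{\tilde S}(\pi^* p)\bigr) = \Phi_S(p)$, which is exactly the asserted commutativity.

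The main obstacle is this final step. Everything before it is bookkeeping with ring homomorphisms, but because $\pi_*$ is not multiplicative one cannot argue purely on generators for the outer square; the maneuver of first proving $\Phi_{\tilde S}\circ\pi^* = \pi^*\circ\Phi_S$ (all four maps ring homomorphisms) and only then applying the projection formula is what circumvents this. I expect the one genuinely non-formal point to be ensuring the projection formula and $\pi_*(1)=1$ are legitimately available in the operational setting for the proper lci morphism $\pi$ between smooth stacks, as arranged in \ref{def:pushforward_smooth}.
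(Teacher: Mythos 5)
Your proof is correct and follows essentially the same route as the paper's: reduce to degree-one generators using \ref{thm:global_gen_simple_stacks} (the paper phrases this as checking on monomials $\prod_{a}D_a$, with $\pi^*$ acting by total transform), then conclude via the projection formula and $\pi_*\pi^*=\on{id}$. Your write-up is simply a more carefully organized version of the same argument, making explicit the point — left implicit in the paper — that one must first establish the intermediate identity $\Phi_{\tilde S}\circ\pi^*=\pi^*\circ\Phi_S$ between ring homomorphisms before applying the non-multiplicative $\pi_*$.
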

\begin{proof}
Since $S$ is simple it is enough to check this for a monomial in elements of $\ghost_S(S)$ corresponding to prime boundary divisors on $S$, say $\prod_{a \in A} D_a$. Applying $\pi^*$ corresponds to taking the total transforms of these divisors up to $\tilde S$. We then need to show that 
\begin{equation}
\pi_*(\prod_{a} \pi^*D_a) = \prod_a D_a, 
\end{equation}
which follows from the projection formula and the fact that $\pi_*\pi^*$ is the identity. 
\end{proof}

\begin{lemma}
For any log regular $S$, the map $\Phi_S$ is independent of the choice of log blowup $\pi\colon \tilde S \to S$. 
\end{lemma}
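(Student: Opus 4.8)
The plan is to reduce any two choices of simplifying log blowup to a common refinement and then apply Lemma~\ref{lem:blowup_of_simple}. Suppose $\pi_1\colon \tilde S_1 \to S$ and $\pi_2\colon \tilde S_2 \to S$ are log blowups with $\tilde S_1$, $\tilde S_2$ both simple, and write $\Phi_S^{(i)} = (\pi_i)_* \circ \Phi_{\tilde S_i} \circ \pi_i^*$ for the two candidate maps. The goal is to show $\Phi_S^{(1)} = \Phi_S^{(2)}$. Throughout I take $S$ quasi-compact, as in the construction of $\Phi_S$; the non-quasi-compact case then follows by restricting to quasi-compact opens, since $\Phi_S$ is defined there and the restrictions are compatible.

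First I would produce a common refinement over $S$. Log blowups of $S$ form a cofiltered system (they correspond to finite subdivisions of the fan associated to the log structure of $S$, and any two subdivisions admit a common refinement), so there is a log blowup $q\colon \tilde S_3 \to S$ dominating both, together with factorisations $q = \pi_i \circ p_i$ where each $p_i\colon \tilde S_3 \to \tilde S_i$ is again a log blowup (being the modification induced by the refinement of the corresponding subdivision). Applying \ref{lem:simplifying_blowups} to $\tilde S_3$ and composing, I may further assume $\tilde S_3$ is simple; since a log blowup of a log blowup is again a log blowup, this does not disturb the domination, and the $p_i$ remain log blowups between simple stacks.

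With this in place the argument is a formal diagram chase. Because $\tilde S_3$ and $\tilde S_i$ are both simple and $p_i$ is a log blowup, \ref{lem:blowup_of_simple} gives $\Phi_{\tilde S_i} = (p_i)_* \circ \Phi_{\tilde S_3} \circ p_i^*$. Substituting, and using that the pullback on piecewise-polynomial functions is contravariantly functorial ($p_i^* \circ \pi_i^* = q^*$, from the natural map $f^*\on{PP}_S \to \on{PP}_{S'}$) while the operational proper pushforward is covariantly functorial for composable proper lci morphisms ($(\pi_i)_* \circ (p_i)_* = q_*$, valid since log blowups between smooth stacks are proper and lci), I obtain
\[
\Phi_S^{(i)} = (\pi_i)_* \circ (p_i)_* \circ \Phi_{\tilde S_3} \circ p_i^* \circ \pi_i^* = q_* \circ \Phi_{\tilde S_3} \circ q^*
\]
for each $i$. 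The right-hand side does not depend on $i$, so $\Phi_S^{(1)} = \Phi_S^{(2)}$, as desired.

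The main obstacle is the geometric input of the second paragraph: confirming that any two log blowups of $S$ are dominated by a third which is simultaneously a log blowup of each, and which can be taken simple. Once this cofiltering property is granted, everything else is purely formal, resting only on \ref{lem:blowup_of_simple} and the functoriality of $p \mapsto p^*$ and $p \mapsto p_*$.
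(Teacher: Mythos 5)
Your proposal is correct and follows exactly the route the paper sketches: the paper's proof is the one-line instruction ``reduce to one blowup dominating another, then apply \ref{lem:blowup_of_simple}'', and you have simply written out that reduction (common refinement, made simple via \ref{lem:simplifying_blowups}) together with the functoriality of $\pi^*$ and $\pi_*$ needed to conclude. No further comment is needed.
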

\begin{proof}
Reduce to one blowup dominating another, then apply \ref{lem:blowup_of_simple}. 
\end{proof}

\begin{example}
We resume \ref{eg:nodal_cubic}, and recall that $\on{PP}^2(S) = \bb Z\Span{(a+b)^2, ab}$. Then $(a+b)^2$ maps to $E^2 \in \CHop^2(S)$, and $ab$ maps to the class of the singular point of $E$ in $\CHop^2(S)$. 
\end{example}

\subsubsection{The case of log regular stacks locally of finite type}

Let $\ul S$ be an algebraic stack locally of finite type over $\field$, and write $\on{qOp}(\ul S)$ for the category of open substacks of $\ul S$ which are quasi-compact over $\field$, with maps over $\ul S$. 
Then one sees easily that 
\begin{equation}
\CHop(\ul S) = \on{lim}_{U \in \on{qOp}(\ul S)} \CHop(U). 
\end{equation}

\begin{lemma}\label{lem:open_immersion_compatibility}
Let $i\colon S_1 \hra S_2$ be a strict open immersion of quasi-compact log smooth log algebraic stacks over $\field$. The diagram
\begin{equation}
 \begin{tikzcd}
  \on{PP}_{S_2}(S_2) \arrow[r, "\Phi_2"]\arrow[d, "i^*"] & \CHop(S_2) \arrow[d, "i^*"]\\
    \on{PP}_{S_1}(S_1) \arrow[r, "\Phi_1"] & \CHop(S_1) 
\end{tikzcd}
\end{equation}
commutes. 
\end{lemma}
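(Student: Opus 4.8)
The plan is to reduce to the case where both stacks are \emph{simple} --- where $\Phi$ is defined directly on boundary divisors --- and then to assemble the statement from three elementary compatibility squares and a diagram chase.

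First I would choose a simplifying log blowup $\pi_2\colon \tilde S_2 \to S_2$ with $\tilde S_2$ simple (this exists by \ref{lem:simplifying_blowups}) and pull it back along $i$ to form the cartesian square
\begin{equation}
\begin{tikzcd}
\tilde S_1 \arrow[r, "\tilde i"] \arrow[d, "\pi_1"] & \tilde S_2 \arrow[d, "\pi_2"] \\
S_1 \arrow[r, "i"] & S_2.
\end{tikzcd}
\end{equation}
Since $i$ is a strict open immersion, so is $\tilde i$, and $\pi_1\colon \tilde S_1 \to S_1$ is again a log blowup (log blowups are stable under strict base change). The key point is that $\tilde S_1$ is an open substack of the simple stack $\tilde S_2$, and any open substack of a simple log stack is again simple: regularity of each stratum $D_J$ is preserved under open restriction, and since each connected component of $D_J$ is regular and connected, hence irreducible, its intersection with an open remains irreducible (hence connected), which gives injectivity of $\pi_0(D_J \cap \tilde S_1) \to \pi_0(\tilde S_1)$. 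As $\Phi_S$ is independent of the chosen simplifying blowup, I may then compute $\Phi_{S_j}$ as $\pi_{j*}\,\Phi_{\tilde S_j}\,\pi_j^*$ for $j = 1,2$.

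Then I would check three commuting squares. (i) [$\on{PP}$ functoriality] $\tilde i^*\pi_2^* = \pi_1^* i^*$ on $\on{PP}$, which follows formally from $\pi_2\tilde i = i\pi_1$ and the contravariant functoriality of $\on{PP} = \Sym\ghost^\gp$. (ii) [simple case] $\tilde i^*\Phi_{\tilde S_2} = \Phi_{\tilde S_1}\tilde i^*$: both composites are ring homomorphisms out of $\on{PP}_{\tilde S_2}(\tilde S_2)$, which by \ref{thm:global_gen_simple_stacks} is generated as a ring by the classes of boundary divisors $D_b$; on such a generator $\tilde i^*$ sends the ghost section $D_b$ to its restriction $D_b \cap \tilde S_1$, while on Chow $\tilde i^* c_1(\ca O(D_b)) = c_1(\ca O(D_b)|_{\tilde S_1}) = c_1(\ca O(D_b \cap \tilde S_1))$, so both composites return $\Phi_{\tilde S_1}(D_b \cap \tilde S_1)$ and the square commutes by generation. (iii) [Chow base change] $i^*\pi_{2*} = \pi_{1*}\tilde i^*$, the compatibility of proper pushforward with restriction to a quasi-compact open in the cartesian square, which holds for operational Chow since $i$ is an open immersion. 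Combining these gives
\begin{align*}
i^*\Phi_{S_2} &= i^*\pi_{2*}\Phi_{\tilde S_2}\pi_2^*
= \pi_{1*}\tilde i^*\Phi_{\tilde S_2}\pi_2^* \\
&= \pi_{1*}\Phi_{\tilde S_1}\tilde i^*\pi_2^*
= \pi_{1*}\Phi_{\tilde S_1}\pi_1^* i^*
= \Phi_{S_1} i^*.
\end{align*}

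The heart of the argument is the reduction to the simple case together with square (ii): I expect the main work to lie in confirming that restricting the simplifying blowup preserves simplicity (via the irreducibility-of-strata argument above) and in the observation that, in the simple case, $\Phi$ is rigidly determined by its values on boundary divisors, so that compatibility collapses to the evident identity on a single restricted divisor. Square (iii) is a standard property of operational Chow groups which I would cite rather than prove, and square (i) is purely formal.
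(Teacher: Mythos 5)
Your proposal is correct and takes essentially the same route as the paper: the paper's (two-sentence) proof likewise pulls back a simplifying blowup of $S_2$ to one of $S_1$, reduces to the simple case, and then checks the statement on boundary divisors using that both horizontal maps are ring homomorphisms and that $\on{PP}$ of a simple stack is generated by divisors (\ref{thm:global_gen_simple_stacks}). Your squares (i) and (iii) and the verification that an open substack of a simple stack is simple are exactly the details the paper leaves implicit in the phrase ``pulls back to one for $S_1$.''
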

\begin{proof}
A simplifying blowup for $S_2$ pulls back to one for $S_1$, so we may assume both $S_i$ simple. Then it is enough to check the result for divisors (since both maps $i^*$ are ring homomorphisms), but this is easy. 
\end{proof}

Now let $S$ be a log smooth log algebraic stack over $\field$. 
Given $p \in \on{PP}_S(S)$ and any $U \hra S$ quasi-compact, we restrict $p$ to $p_U \in \on{PP}_{U}(U)$, yielding an element $\Phi_U(p_U) \in \CHop(U)$. By \ref{lem:open_immersion_compatibility} these glue, yielding a map 
\begin{equation}\label{eq:PP_to_CHop}
\Phi_S\colon \on{PP}_S(S) \to \CHop(S). 
\end{equation}

\subsection{Subdivided piecewise-polynomials and the log-tautological ring}

%
%
%

For a log algebraic stack $S$ we define the group of subdivided piecewise-polynomial functions as
\begin{equation}
\on{sPP}'(S) = \on{colim}_{\tilde S \to S} \on{PP}(\tilde S),
\end{equation}
where $\tilde S \to S$ runs over all log blowups of $S$. 

\begin{lemma}
The pullback $\on{PP}(S) \to \on{PP}(\tilde S)$ is injective for $\tilde S \to S$ any log blowup, so the natural maps to the colimit are injective.  
\end{lemma}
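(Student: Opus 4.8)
The plan is to reduce injectivity of $\pi^*\colon \on{PP}(S) \to \on{PP}(\tilde S)$ to a statement at the generic points of the \emph{deepest} strata, and there to exploit that a log blowup is a subdivision of cones. First I would cover $S$ by atomic opens; after a simplifying blowup (\ref{lem:simplifying_blowups}) and by \ref{thm:global_gen_simple_stacks}, on such an atomic open $U$ with closed stratum of generic point $\xi$ every global piecewise-polynomial function is a \emph{global} polynomial in the piecewise-linear functions, and $\ghost^\gp_S(U) = \ghost^\gp_{S,\xi}$. Evaluation at $\xi$ therefore identifies $\on{PP}(U)$ with $\Sym\ghost_{S,\xi}^\gp$ (the composite $\Sym(\ghost^\gp_S(U)) \to \on{PP}(U) \to \Sym\ghost^\gp_{S,\xi}$ is the identity, so both maps are isomorphisms). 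Consequently a section $p \in \on{PP}(S)$ vanishes if and only if its stalk $p_\xi$ vanishes at the generic point $\xi$ of every deepest (closed) stratum of $S$.

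The geometric heart is that the subdivision $\tilde S \to S$ carries each full-dimensional cone of $S$ onto a union of full-dimensional cones of $\tilde S$; equivalently, every deepest point $\tilde\xi$ of $\tilde S$ maps to a deepest point $\xi = \pi(\tilde\xi)$ of $S$, and conversely every deepest point of $S$ is so obtained. For such a pair the induced map on groupified characteristics $\ghost^\gp_{S,\xi} \to \ghost^\gp_{\tilde S,\tilde\xi}$ is injective: on a full-dimensional cone the groupified characteristic is the ambient cocharacter lattice, which the subdivision leaves unchanged, so this is an isomorphism of free abelian groups of the same rank. Since $\Sym$ of an injection of free abelian groups is again injective (tensor with $\bb Q$, using that $\Sym \ghost^\gp_{S,\xi}$ is torsion-free) and $(\pi^*p)_{\tilde\xi}$ is the image of $p_\xi$ under this $\Sym$-map, vanishing of $\pi^*p$ forces $p_\xi = 0$ at every deepest $\xi$, whence $p = 0$.

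The step I expect to be the genuine obstacle, and the reason the result is not formal, is that the morphism of sheaves $\pi^*\ghost_S^\gp \to \ghost_{\tilde S}^\gp$ is \emph{not} injective: at the generic point of an exceptional divisor the branches meeting there each acquire a multiplicity, so the map collapses their difference (for the blowup of the origin in $\bb A^2$ as in \ref{eg:nodal_cubic} it is $(a,b)\mapsto a+b$). One therefore cannot test vanishing stratum-by-stratum on all of $\tilde S$; the whole content is that injectivity is restored precisely on the deepest strata, where the cones are full-dimensional, and that---by the atomic description of $\on{PP}$---testing there suffices. The remaining assertions (existence of atomic covers, that the deepest strata of $\tilde S$ surject onto those of $S$, and the $\Sym$-injectivity lemma) are routine once this is isolated. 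Finally, injectivity of $\on{PP}(S) \to \on{sPP}'(S)$ into the colimit is then automatic: the log blowups of $S$ form a filtered system under common refinement with pullback transition maps, so a class dying in the colimit already dies in some $\on{PP}(\tilde S)$, and hence is zero by what we have shown.
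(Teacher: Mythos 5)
Your proposal is correct and follows the same basic strategy as the paper's (very terse) proof: localize to atomic patches and reduce to a toric computation about subdivisions. The difference is in how the reduction to degree one is handled. The paper simply asserts that on an atomic patch it is ``enough to check that $\ghost_S(S) \to \ghost_{\tilde S}(\tilde S)$ is injective,'' citing the toric picture; taken literally this is not a complete reduction, since $\Sym(\ghost^\gp_{\tilde S}(\tilde S)) \to \on{PP}(\tilde S)$ is surjective but not injective (monomials in divisors with empty common intersection die), so injectivity in degree one does not formally propagate to $\on{PP}$. Your device of evaluating at the stalk of a generic point of a \emph{minimal} stratum of $\tilde S$ lying over the closed stratum of the atomic patch --- where the induced map on $\ghost^\gp$-stalks is an isomorphism of lattices because a full-dimensional cone of the subdivision spans the same subspace as the original cone --- is exactly the right way to close this, and your observation that $\pi^{-1}\ghost^\gp_S \to \ghost^\gp_{\tilde S}$ fails to be injective at non-maximal points correctly identifies why a naive stalkwise argument is unavailable. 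Two small blemishes: the appeal to a simplifying blowup of $S$ is unnecessary and, read literally, circular (you would need the injectivity being proved for that auxiliary blowup); a regular log regular atomic patch is already simple, so \ref{thm:simple_global_gen_sch} applies directly. Likewise ``deepest stratum of $S$'' should be understood locally, i.e.\ as the closed stratum of each atomic patch in the cover, since a non-quasi-compact $S$ need not have globally deepest strata; with that reading your generization argument goes through unchanged.
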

\begin{proof}It suffices to show this locally, so we reduce to the atomic case. It is then enough to check that the natural map $\ghost_S(S) \to \ghost_{\tilde S} ( \tilde S) $ is injective. This is clear from the construction of the blowup in the toric case, but any log blowup is locally a strict base-change of a toric blowup. 
\end{proof}

We define the sheaf of subdivided piecewise-polynomials $\on{sPP}$ on the small strict \'etale site of $S$ as the sheafification of the presheaf of rings $\on{sPP}'\colon U \mapsto \on{sPP}'(U)$. This sheaf property then immediately yields
\begin{equation}
\on{sPP}(S) = \on{lim}_{U \in \on{qOp}(S)}\on{sPP}(U). 
\end{equation}

%
%
%

The natural maps 
\begin{equation}
\Phi_i^{\log}\colon \on{sPP}(U_i) \to \on{colim}_{\tilde U_i}\CHop(\tilde U_i)
\end{equation}
then assemble into a ring homomorphism 
\begin{equation}\label{eq:logPhi}
\Phi^{\log}\colon \on{sPP}(S) \to \LogChow(S). 
\end{equation}

\begin{remark}
The presheaf $\on{sPP}'$ is always separated, and is a sheaf if $S$ is quasi-compact and quasi-separated; we make the above construction to avoid having to worry about finding common refinements of blowups of very large stacks. 
\end{remark}

\subsubsection{The log-tautological ring}


\begin{definition}\label{def:log_tautological}
Let $S$ be a smooth log smooth log algebraic stack over $\field$ and let $T \sub \CHop(S)$ be a subring. We define $T^\log \sub \LogChow(S)$ to be the sub-$T$-algebra of $\LogChow(S)$ generated by the image of 
\begin{equation}
\Phi^\log\colon \on{sPP}(S) \to \LogChow(S). 
\end{equation}
\end{definition}
A natural application is to take $S = \Mbar_{g, n}$ and $T$ to be the usual tautological subring of the Chow ring. We want to ensure that after carrying out our logarithmic constructions and pushing back down to $\Mbar_{g,n}$ we still have tautological classes. 

\begin{definition}\label{def:large_enough}\label{lem:pushforward_of_tectonic_log}
We say a subring $T \sub \CHop(S)$ is \emph{tectonic}\footnote{It contains many strata, which are formed by overlaps of other strata, perhaps after some things blow up...} if the pushforward of $T^\log$ from $\LogChow(S)$ to $\CHop(S)$ is equal to $T$. 
\end{definition}

%
%

Giving criteria for when a subring $T \sub \CHop(S)$ is tectonic is somewhat subtle. Certainly if a subring is tectonic then it must contain all boundary strata, and the converse holds if $S$ is simple, but not in general. Fortunately for us a precise criterion has been worked out in \cite{Molcho2021The-Hodge-bundl} for the case where $S$ is Deligne-Mumford and quasi-compact, which will be enough for our applications\footnote{It seems likely that their results (perhaps with slight modification) will also hold in the setting of smooth log smooth algebraic stacks, but we have not verified the details. }. Their criterion goes by way of defining certain \emph{normally decorated strata class} in $\CHop(S)$; the definition is somewhat lengthy, and the details will not be so important for us. We need only the following lemma, and the fact that the usual tautological ring of $\Mbar_{g,n}$ contains these normally decorated strata classes. 

\begin{lemma}\label{lem:tectonic_condition}
Suppose $S$ is Deligne-Mumford and quasi-compact. Then a subring $T \sub \CHop(S)$ is tectonic if and only if it contains all normally decorated strata class. 
\end{lemma}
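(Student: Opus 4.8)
The plan is to reduce the entire biconditional to a single geometric input extracted from \cite{Molcho2021The-Hodge-bundl}: that the image of the composite $\nu_* \circ \Phi^{\log}\colon \on{sPP}(S) \to \CHop(S)$ is \emph{exactly} the $\bb Q$-span $N \subseteq \CHop(S)$ of the normally decorated strata classes. Granting this, both implications follow from formal manipulations in the log Chow ring. The first thing I would record is the structure of $T^{\log}$. Since $\Phi^{\log}$ of \ref{eq:logPhi} is a ring homomorphism, its image is a subring of $\LogChow(S)$, and the $T$-algebra structure is induced by the ring pullback $\CHop(S)\to \LogChow(S)$ of \ref{def:CH_to_LogCH_pullback}. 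Hence the sub-$T$-algebra $T^{\log}$ is simply the $T$-submodule generated by the image of $\Phi^{\log}$: every element may be written as $\sum_i t_i \cdot \Phi^{\log}(p_i)$ with $t_i \in T$ (via pullback) and $p_i \in \on{sPP}(S)$.

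For the forward implication, suppose $T$ is tectonic, so $\nu_*(T^{\log}) = T$. Because $1 \in T$ we have $\Phi^{\log}(\on{sPP}(S)) \subseteq T^{\log}$, and therefore $N = \nu_*\Phi^{\log}(\on{sPP}(S)) \subseteq \nu_*(T^{\log}) = T$. Thus $T$ contains every normally decorated strata class.

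For the reverse implication, suppose $N \subseteq T$; I must show $\nu_*(T^{\log}) = T$. The inclusion $T \subseteq \nu_*(T^{\log})$ holds since for $t \in T$ the class $\nu^* t$ lies in $T^{\log}$ and $\nu_*\nu^* = \on{id}$ on operational Chow (a log blowup is proper and birational, so with rational coefficients pushforward of pullback is the identity). For the opposite inclusion, take a typical element $\sum_i t_i \Phi^{\log}(p_i)$; choosing, by quasi-compactness, a single smooth log blowup $\tilde S \to S$ on which all the $\nu^* t_i$ and (determinations of the) $p_i$ live, the projection formula gives
\[
\nu_*\Big(\sum_i t_i \Phi^{\log}(p_i)\Big) = \sum_i t_i \cdot \nu_*\Phi^{\log}(p_i) \in \sum_i t_i \cdot N \subseteq T \cdot T = T,
\]
where the last step uses that $T$ is a subring. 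Hence $\nu_*(T^{\log}) = T$ and $T$ is tectonic.

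The only substantive ingredient, and the step I expect to be the main obstacle, is the identification $\on{image}(\nu_* \circ \Phi^{\log}) = N$. This is precisely the computation of \cite{Molcho2021The-Hodge-bundl} relating pushforwards of (subdivided) piecewise-polynomial functions to normally decorated strata classes; the real work lies in matching their conventions (their strata algebra, the definition of the normal decorations, and their pushforward formulae) to the map $\Phi^{\log}$ built here, and in checking this matching is compatible with the log blowups defining $\on{sPP}$. The Deligne--Mumford and quasi-compactness hypotheses enter exactly to keep us within the scope of \cite{Molcho2021The-Hodge-bundl} and to guarantee a common determining blowup for the finitely many $p_i$.
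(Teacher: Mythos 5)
Your proposal is correct and follows essentially the same route as the paper: both reduce the lemma to the comparison in \cite{Molcho2021The-Hodge-bundl} between pushforwards of (subdivided) piecewise polynomials and normally decorated strata classes, and then conclude by the projection formula together with $\nu_*\nu^*=\operatorname{id}$. If anything you are more explicit than the paper, whose two-line proof only records the ingredients for the ``if'' direction (that $R^\star(\tilde S)$ contains all boundary strata of a blowup and that $\nu_*R^\star(\tilde S)\subseteq R^\star(S)$ by their Theorem 13) and leaves the ``only if'' direction --- your containment $N\subseteq\operatorname{image}(\nu_*\circ\Phi^{\log})$ --- implicit.
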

\begin{proof}
Let $\nu\colon \tilde S \to S$ be a log blowup. We denote by $R^\star(S)$ the ring of normally decorated strata classes on $S$, and similarly for $\tilde S$. By definition $R^\star(\tilde S)$ contains\footnote{If we take $\tilde S$ simple then $R^\star(\tilde S)$ is in fact generated by strata, but this is not needed for our argument. } all boundary strata of $\tilde S$, and $\nu_*(R^\star(\tilde S)) \sub R^\star(S)$ by \cite[Theorem 13]{Molcho2021The-Hodge-bundl}. 
\end{proof}

\subsection{Constructing a class}\label{sec:log_fun_class}
Suppose we are given the following data: 
\begin{enumerate}
\item
a quasi-separated log smooth log algebraic stack $S/\field$ which is stratified by global quotients\footnote{In practise this last condition means we must be exclude $\frak M_{1,0}$ from our results; but this is fairly harmless since we can just consider the corresponding cycle on $\frak M_{1,1}$ with zero weighting on the new marking. };
\item $f\colon X \to S$ a birational representable log \'etale morphism;
\item $\J/\field$ an algebraic stack and $i\colon e \tra \J$ a regularly embedded closed substack (or more generally an lci morphism);
\item $\sigma\colon X \to \J$ a morphism over $\field$. 
\end{enumerate}
Suppose also that the base-change $X \times_\J e$ is proper over $S$. Then we construct a class $[\sigma^*e]_{f, \log} \in \LogChow(S)$ --- we often omit the $f$ from the notation when it is clear from context. 

The simplest case of our construction is when $X$ is smooth and $f\colon X \to S$ is a log blowup (then $X \times_\J e \to S$ is automatically proper). Then $\sigma^!e$ is a well-defined class on $X$, and automatically gives an element of $\LogChow(S)$, which we denote $[\sigma^*e]_{f, \log}$.

In the general case a little more care is needed. Because $\LogChow(S)$ is defined as a limit over quasi-compact opens, we may assume that $S$ is quasi-compact. Then by \ref{lem:compactify_to_log_blowup} there exist log blowups $\tilde S \to S$ and $\tilde X \to X$ and a strict open immersion $\tilde f\colon \tilde X \hra \tilde S$ over $f$; after further log blowup we may also assume $\tilde S$ (and hence $\tilde X$) to be smooth. 

\begin{definition}\label{def:sufficiently_fine}
We call such an $\tilde S \to S$ a \emph{sufficiently fine} log blowup (for $f\colon X \to S$), and $\tilde X \hra \tilde S$ the \emph{lift} of $X$. 
\end{definition}

Set $Z = \tilde X \times_{\sigma, \J, i} e$, and consider the diagram 
\begin{equation}
 \begin{tikzcd}
Z   \arrow[r] \arrow[d] \arrow[dd, bend right, swap, "j"]& e \arrow[d, "i"]\\
\tilde X \arrow[r, "\sigma"] \arrow[d, "\tilde f"] & \J\\
\tilde S & 
\end{tikzcd}
\end{equation}
We then define 
\begin{equation}
[\sigma^*e]_{f, \log} = j_*i^![\tilde X]. 
\end{equation}
To unravel this formula, recall that $i$ is lci so we have a gysin morphism $i^!\colon A^*(\tilde X) \to A^*Z$.  The composite $j\colon Z \to \tilde S$ is a closed immersion, in particular projective, so we have a pushforward $j_*\colon A^*Z \to A^*\tilde S$. Finally, $\tilde S$ is smooth, so the intersection product furnishes a map $A^*(S) \to \CHop(\tilde S)$, and we have a natural inclusion $\CHop(\tilde S) \hra \LogChow(S)$. 
%
%

To see that the above construction is independent of the choice of $\tilde S$ we use that gysin pullbacks along lci maps commute with each other and with projective pushforward \cite[Theorem 2.1.12 (xi)]{Kresch1999Cycle-groups-fo}

\begin{remark}\label{lem:determined_on_sufficiently_fine}
Let $\tilde S \to S$ be a sufficiently fine log blowup. Then $[\sigma^*e]_{f, \log}$ is determined on $\tilde S$ (in the sense of \ref{def:determination}). 
\end{remark}

\begin{lemma}\label{lem:cover_gives_same_DR}
Let $\phi\colon X' \to X$ be another birational representable log \'etale morphism, and write $f'\colon X' \to S$, $\sigma'\colon X' \to \J$ for the obvious composites. Then 
\begin{equation}
[\sigma^*e]_{f, \log} = [\sigma'^*e]_{f', \log} \in \LogChow(S). 
\end{equation}
\end{lemma}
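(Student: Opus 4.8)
The plan is to compute both classes on a single sufficiently fine log blowup on which the two constructions become visibly identical, mirroring the argument for independence of $\tilde S$ given above. First I would reduce to the case where $S$ is quasi-compact: since $\LogChow(S) = \on{lim}_{U}\LogChow(U)$ over quasi-compact opens and the whole construction of \ref{sec:log_fun_class} is compatible with strict open restriction, it suffices to prove the equality after restricting to each $U \in \on{qOp}(S)$. Note also that $f' = f \circ \phi$ is again birational, representable and log \'etale, and that both classes are defined because both $X \times_\J e$ and $X' \times_\J e$ are proper over $S$.

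Next I would build the common model. Choose a sufficiently fine log blowup $\tilde S \to S$ for $f$ (\ref{def:sufficiently_fine}), with lift $\tilde X \hra \tilde S$, so that $\tilde X \to X$ is a log blowup and $\tilde X$ is a smooth open substack of $\tilde S$. The rational map $\tilde S \dashrightarrow X'$ lifting $\tilde S \dashrightarrow X$ along $\phi$ is defined on an open $V \sub \tilde X$, and on $V$ it is compatible with $\phi$ by construction of the fibre product. Since $\phi$ is a log \'etale cover, its base-change to $\tilde X$ is a log modification and hence can be resolved by a log blowup of $\tilde X$; as $\tilde X$ is open in $\tilde S$, this blowup extends to a log blowup $\tilde S_1 \to \tilde S$ whose restriction over $\tilde X$ is exactly the resolving blowup. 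Passing to $\tilde S_1$ (and blowing up further to ensure smoothness) I would arrange that the lift $\hat X \hra \tilde S_1$ of $X$ \emph{coincides} with the lift of $X'$: concretely, $\hat X$ is the preimage of $\tilde X$, it is simultaneously a log blowup of $X$ and of $X'$, and the maps $\hat X \to X$ and $\hat X \to X' \stackrel{\phi}{\to} X$ agree. Here properness of $X' \times_\J e$ over $S$ is what guarantees that the lift of $X'$ captures all of the locus $Z$, so that no part of the intersection is lost in passing from $X$ to $X'$.

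With this common model the conclusion is immediate. On $\hat X$ the two structure maps to $\J$, namely $\hat X \to X \stackrel{\sigma}{\to} \J$ and $\hat X \to X' \stackrel{\sigma'}{\to} \J$, agree because $\sigma' = \sigma \circ \phi$ and the two lifts are compatible. Consequently $Z \coloneqq \hat X \times_{\J} e$ is the same closed substack of $\tilde S_1$ for both constructions, the closed immersion $j \colon Z \to \tilde S_1$ is the same, and the refined gysin class $i^{!}[\hat X]$ is the same. Thus $j_* i^{!}[\hat X]$ computes both $[\sigma^*e]_{f,\log}$ and $[\sigma'^*e]_{f',\log}$ as one and the same element of $\CHop(\tilde S_1) \hra \LogChow(S)$ (each class being determined on $\tilde S_1$ in the sense of \ref{def:determination}), and since the construction is independent of the sufficiently fine blowup, the two classes in $\LogChow(S)$ coincide.

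The main obstacle I expect is the second paragraph: arranging that the lifts of $X$ and $X'$ can be made to coincide on a common blowup, equivalently that the lift of $X'$ sees all of $Z$. This requires two inputs working together --- that a log blowup of the open $\tilde X$ extends to a log blowup of $\tilde S$ (so the resolution of the indeterminacy of $\tilde S \dashrightarrow X'$ is global), and that properness of the intersection loci over $S$, together with $\phi$ being a cover, prevents any boundary component of $Z$ from being discarded when we pass to the model $X'$. Any residual comparison of refined gysin classes across the intermediate blowups is then handled exactly as in the independence argument, by the commutativity of gysin pullback along lci maps with proper pushforward \cite[Theorem 2.1.12 (xi)]{Kresch1999Cycle-groups-fo}.
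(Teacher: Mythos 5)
Your overall strategy --- pass to a single sufficiently fine log blowup of $S$ on which both classes are determined and compare the two determinations there --- is the same as the paper's, which takes $T \to S$ sufficiently fine for $X' \to S$ and dominating a sufficiently fine blowup for $X \to S$ and then unravels the definitions. The gap is in your second paragraph. The assertion that one can arrange the lift $\hat X \hra \tilde S_1$ of $X$ to \emph{coincide} with the lift of $X'$ is false in general: an open substack of $\tilde S_1$ that is simultaneously a log blowup of $X$ and of $X'$ would be proper and surjective over $X'$ and proper over $X$, and since properness descends along proper surjective morphisms this would force $\phi\colon X' \to X$ itself to be proper. But $\phi$ is only assumed birational, representable and log \'etale, and in the place the lemma is actually used (\ref{thm:GL_invariance_on_Pic}) it is a base change of the non-proper map $\nu\colon \Pictdz^{r\loz} \to \Pictdz^r$. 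Relatedly, $\phi$ is not a ``cover'' and its base change to $\tilde X$ is not a log modification (those are proper by definition), so the resolution step does not produce what you claim. What the construction actually hands you on a common model $T$ is two \emph{a priori different} opens $\tilde X, \tilde X' \hra T$ with two a priori different closed loci $Z = \tilde X \times_\J e$ and $Z' = \tilde X' \times_\J e$, and the content of the lemma is precisely that $j_*i^![\tilde X] = j'_*i^![\tilde X']$.

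Your fallback justification --- that properness of $X'\times_\J e$ over $S$ guarantees the lift of $X'$ ``captures all of $Z$'' --- is not sufficient either. Since each class depends only on a neighbourhood of its locus in $T$, what is really needed is that $Z$ and $Z'$ coincide with the same Gysin class, i.e.\ that $X'\times_\J e \to X\times_\J e$ is proper, surjective and of degree one. Properness of both sides over $S$ gives properness of this map but not its surjectivity: for example, if $\phi$ were a dense strict open immersion deleting a closed boundary substack over which $\sigma$ meets $e$ in an entire connected component of $X\times_\J e$, then $X'\times_\J e \to S$ would remain proper while the class would visibly change. So the surjectivity of $X'\times_\J e \to X\times_\J e$ is the real input (it holds in the paper's applications because there $\phi$ is a genuine subdivision over the relevant loci), and your write-up asserts it rather than establishing it. Once that point is secured, the remaining comparison of Gysin classes across the intermediate blowups does go through by \cite[Theorem 2.1.12 (xi)]{Kresch1999Cycle-groups-fo}, as in your final paragraph.
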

\begin{proof}
Let $T \to S$ be a blowup that is sufficiently fine for $X' \to S$, and which dominates a sufficiently fine blowup for $X \to S$. Unravelling the definitions one sees that the classes agree already in $\CHop(T)$. 
\end{proof}

The key to the above construction is the following lemma, whose proof is essentially the same as that of Lemma 6.1 of \cite{Holmes2017Extending-the-d}. 

\begin{lemma}\label{lem:compactify_to_log_blowup}
Let $S$ be a regular log regular qcqs stack and $f\colon X \to S$ birational separated log \'etale representable. Then there exist log blowups $\tilde S \to S$ and $\tilde X \to X$ and a strict open immersion $\tilde X \to \tilde S$ over $S$. 
\end{lemma}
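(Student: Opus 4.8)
The plan is to convert the statement into the combinatorics of cone complexes and to reduce it to a \emph{refine-and-extend} statement for partial subdivisions. Since $S$ is regular and log regular, by \cite{Nizio2006Toric-singulari} its log structure is the one attached to its normal crossings boundary, and $S$ acquires a tropicalisation $\Sigma_S$ under which log blowups of $S$ correspond exactly to proper subdivisions of $\Sigma_S$; crucially, this dictionary is compatible with strict smooth (in particular strict \'etale) localisation, since the boundary stratification is \'etale-local. As $X \to S$ is log \'etale and $S$ is log regular, $X$ is log regular too and carries its own tropicalisation $\Sigma_X$. Because $\LogChow$, and indeed this whole construction, is computed on quasi-compact opens, I may and do assume $S$ quasi-compact, so that only finitely many strata occur.

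First I would extract the combinatorial shadow of $f$. A representable log \'etale birational morphism induces a map of cone complexes $\Sigma_X \to \Sigma_S$ which is a subdivision onto its image; the extra hypothesis that $f$ is \emph{separated} is precisely what forces this to be an open immersion onto a subcomplex of a subdivision, with no cones of $\Sigma_X$ identified in $\Sigma_S$. Concretely, there is an open subcomplex $V \sub \Sigma_S$ (the image of $\Sigma_X$) of which $\Sigma_X$ is a subdivision. In more geometric terms, and in the spirit of the valuativisation of \cite{Kato1989Logarithmic-deg}, this says the valuativisation $X^{\mathrm{val}} \to S^{\mathrm{val}}$ is an open immersion; that is the content I want to capture.

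The main step is then a relative completion of fans: I would produce a refinement $\widetilde{\Sigma}_X$ of the subdivision $\Sigma_X$ of $V$, together with a proper subdivision $\Sigma'$ of all of $\Sigma_S$, such that the restriction $\Sigma'|_V$ equals $\widetilde{\Sigma}_X$. Over an atomic chart this is the classical fact that a subdivision of an open subfan can be refined so as to extend to a complete subdivision of the ambient fan (one takes a common refinement with a fixed fan-completion). The proper subdivision $\Sigma'$ corresponds to a log blowup $\tilde S \to S$ (proper, log \'etale, and an isomorphism over the trivial-log locus). The subcomplex of $\Sigma'$ lying over $V$, namely $\widetilde{\Sigma}_X$, cuts out a strict open substack $\tilde X \hra \tilde S$; since $\widetilde{\Sigma}_X$ refines $\Sigma_X = $ trop$(X)$, the induced map $\tilde X \to X$ is the log blowup corresponding to that refinement. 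By construction the composite $\tilde X \hra \tilde S \to S$ agrees with $\tilde X \to X \to S$, and $\tilde X \hra \tilde S$ is strict because it is an open immersion of log regular stacks carrying the restricted log structure.

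The hard part will be two intertwined points. First, completion of a partial subdivision is not canonical, so to globalise it over a stack $S$ that is not atomic I cannot simply complete chart-by-chart and glue; instead I would carry out the refine-and-extend step directly on the cone stack $\Sigma_S$ (using quasi-compactness to reduce to finitely many strata), or, along the valuativisation route, observe that $X^{\mathrm{val}}$ is a quasi-compact open of $S^{\mathrm{val}} = \on{lim}_{\tilde S} \tilde S$ and descend this open to a single finite log blowup by a limit argument, setting $\tilde X = \tilde S \times_{S^{\mathrm{val}}} X^{\mathrm{val}}$. Second, one must verify that $\tilde X \to X$ is genuinely a log blowup (not merely log \'etale) and that $\tilde X \hra \tilde S$ is a \emph{strict} open immersion over $S$; both are read off from the combinatorics, as the cones of $\tilde X$ are literally a subcomplex of those of $\tilde S$. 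This is exactly the mechanism of \cite[Lemma 6.1]{Holmes2017Extending-the-d}, which I would follow essentially verbatim, replacing schemes by stacks via strict smooth charts.
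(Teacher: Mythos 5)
Your overall strategy is the paper's: reduce to the atomic/toric case, where the statement is the classical fact that a partial subdivision of a cone can be refined and completed, and then globalise over a finite atlas obtained from the qcqs hypotheses; you even cite the same source (Lemma 6.1 of the earlier extension paper) that the paper itself points to. The local step and the identification of the combinatorial content are fine.

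However, the one genuinely nontrivial idea in the paper's proof is exactly the point you flag as ``the hard part'' and then leave unresolved, so as written there is a gap. Your first proposed route (``carry out the refine-and-extend step directly on the cone stack'') restates the problem rather than solving it, and your second (descend a quasi-compact open of the valuativisation to a finite log blowup) is essentially equivalent to the lemma being proved, so it risks circularity unless developed independently. The paper's resolution is concrete: a strict smooth map of atomic patches corresponds to a face inclusion of cones $\bb N^r \hra \bb N^s$; along such a face inclusion a subdivision of the big cone pulls back to the face, but also a subdivision of the face extends \emph{canonically} to the big cone by taking the product. One then takes the finite diagram $D$ of atomic patches (finitely many by quasi-compactness, with intersections covered by finitely many patches by quasi-separatedness), formally inverts its morphisms to get $D'$, and equips each cone with the superposition (common refinement) of the pullbacks along all morphisms of $D'$ of the initially chosen local completions. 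This superposition is what restores canonicity and makes the local completions agree along every face map, so that they glue to a single log blowup of $S$ restricting to one of $X$. Without this (or some substitute for it), the chart-by-chart completions need not be compatible and the gluing step fails.
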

Probably this lemma is false if one drops either the quasi-compactness or quasi-separatedness assumptions, but we have not managed to write down an example, and would be interested to see one. 
\begin{proof}
Consider first the case where $S$ is an affine toric variety, given by some cone $c \isom \bb N^r$. Then $X$ is given by a fan $F$ consisting of a collection of cones contained in $c$. Let $\bar F$ be a complete fan in $c$ such that every cone in $F$ is a union of cones in $c$; after further refinement of $\bar F$ we can assume that it corresponds to a log blowup $\tilde S \to S$. The restriction of $\bar F$ to $F$ gives a log blowup $\tilde X \to X$, and a strict open immersion $\tilde X \to \tilde S$. 

In the case where $S$ is an atomic log scheme we can follow essentially the same procedure, where the cone $c\isom \bb N^r$ is replaced by the stalk of the ghost sheaf over the closed stratum of $S$. 

%
%

In the general case we can find a smooth cover $S$ by finitely many atomic patches (by quasi-compactness), and each intersection can be covered by finitely many atomic patches (by quasi-separatedness). A strict map of atomic patches just corresponds to some inclusion of a cone as a face of another cone: $\bb N^r \hra \bb N^s$. 

Given a face inclusion $\bb N^r \hra \bb N^s$ and a subdivision of $\bb N^s$, we can pull the subdivision back to a (unique) subdivision of $\bb N^r$. But also, given a subdivision of $\bb N^r$ we can turn it into a subdivision of $\bb N^s$ in a \emph{canonical} way, by taking the product. Hence if $\bb N^r \hra \bb N^s$ is a face map (where we allow $r=s$) and we have subdivisions of $\bb N^r$ and $\bb N^s$, then we can find `common refinements' to subdivisions of $\bb N^r$ and $\bb N^s$ which agree along the face map. Moreover if both starting subdivisions were log blowups then so are these common refinements. 

To conclude the proof, we just need to extend this `common refinement' procedure from a single face map to any diagram $D$ of face maps with finitely many objects. Such a diagram necessarily also has finitely many morphisms (since there are only finitely many face maps between any two cones), hence the same is true for the category $D'$ obtained by formally inverting all the maps in $D$. By the discussion in the previous paragraph we can pull back a log blowup along any map in $D'$. 

We are given a log blowup of each cone in $D$. For a fixed cone $c$ there are only finitely many pairs $d, f$ where $d$ is another cone and $f\colon d \to c$ is a morphism in $D'$. We then give $c$ the log blowup which is the superposition over all these pairs $(f, d)$ of the pullback along $f$ of the given log blowup of $d$. In this way we equip every object of $D$ with a log blowup, in such a way that these are compatible along all the face maps in $D$. These then glue to a global log blowup of $S$, which pulls back to a global log blowup of $X$. 
%
%
%
\end{proof}

\section{Logarithmic double ramification cycles}

\subsection{Notation}
Here we introduce notation needed for applying the machinery developed in the previous two sections to moduli of curves and to double ramification cycles. 
\begin{enumerate}\label{def:P} 
\item
$\frak M_{g,n}$ denotes the (smooth, algebraic) stack of prestable curves of genus $g$ with $n$ ordered disjoint smooth markings. This has a normal crossings boundary, inducing a log smooth log structure. Equivalently, this is the stack of log curves of genus $g$ and $n$ markings, with a choice of total ordering on the markings (see \cite{Kato2000Log-smooth-defo}, \cite[Appendix A]{Gross2013Logarithmic-gro}; the underlying algebraic stack is then given by the machinery of minimal log structures \cite{Gillam2012Logarithmic-sta}). 
\item $\Mbar_{g,n}$ is the open substack of $\frak M_{g,n}$ consisting of Deligne-Mumford-Knudsen stable curves. 
\item 
$\frak M = \bigsqcup_{g,n} \frak M_{g,n}$ denotes the stack of all log curves with a choice of total ordering on their markings. Often the genus and markings will not be so important to us, so we can use this more compact notation. 
\item $C$ is the universal curve over $\frak M$. We will abusively use the same notation for the tautological curve over any stack over $\frak M$ (so for example, for the universal curve over $\Mbar_{g,n}$). 
\item $\Picabs$ is the relative Picard stack of $C$ over $\frak M$; objects are pairs of a curve and a line bundle on the curve. This is smooth over $\frak M$ with relative inertia $\bb G_m$; we equip it with the strict (pullback) log structure over $\frak M$. 
\item $\Pictdz$ denotes the connected component of $\Picabs$ corresponding to line bundles of (total) degree 0 on every fibre. 
\item $\J$ is the relative coarse moduli space over $\frak M$ of the \emph{fibrewise} connected component of identity in $\Pictdz$ (or equivalently in $\Picabs)$. Over the locus of smooth curves in $\frak M$ this is an abelian variety, the classical jacobian. In general it is a semiabelian variety over $\frak M$ which parametrises isomorphism classes of line bundles on $\frak C/\frak M$ which have degree 0 on every irreducible component of every geometric fibre (sometimes we refer to this condition as having \emph{multidegree $\ul 0$}). The morphism $\J \to \frak M$ is separated, quasi-compact, and relatively representable by algebraic spaces (none of which hold for $\Picabs$ or $\Pictdz$). 
\item $\bar \J $ is the relative coarse moduli space of $\Pictdz$ over $\frak M$; it can be defined analogously to $\J$ except that we require total degree 0 instead of multidegree $\ul 0$. In particular we have an open immersion $\J \hra \bar \J$, which is an isomorphism over the locus of irreducible curves. 
\end{enumerate}





\begin{lemma}\label{lem:Picabs_quasi-sep}
${\Pictdz}$ is quasi-separated. 
\end{lemma}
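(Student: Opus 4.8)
The plan is to factor the structure morphism as $\Pictdz \to \frak M \to \Spec\field$ and prove that each factor is quasi-separated, using that a composite of quasi-separated morphisms is again quasi-separated. The quasi-separatedness of the stack of prestable curves $\frak M$ over $\field$ is standard: its relative diagonal is computed by Isom-functors of families of prestable (hence proper, flat) curves, which are representable by separated schemes of finite type, so $\frak M \to \Spec\field$ is quasi-separated. Everything therefore reduces to showing that $\pi\colon \Pictdz \to \frak M$ is quasi-separated, i.e.\ that its relative diagonal $\Delta_\pi\colon \Pictdz \to \Pictdz \times_{\frak M} \Pictdz$ is quasi-compact and quasi-separated. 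Since quasi-separatedness concerns only the underlying algebraic stacks, I will ignore the log structures throughout.

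To analyse $\Delta_\pi$ I would take an arbitrary affine scheme $T$ together with a map $T \to \Pictdz \times_{\frak M} \Pictdz$; such a map is the data of a prestable curve $\rho\colon C_T \to T$ (from the induced map $T \to \frak M$) together with two line bundles $\ca L_1, \ca L_2$ on $C_T$ of total degree $0$. Because we work relative to $\frak M$, the curve and its identity automorphism are fixed, so the fibre product $T \times_{\Pictdz \times_{\frak M} \Pictdz} \Pictdz$ is exactly the functor of isomorphisms $\on{Isom}_{C_T}(\ca L_1, \ca L_2)$ of line bundles on $C_T$ (nonempty fibres being torsors under the relative inertia $\bb G_m$). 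It thus suffices to show that $\on{Isom}_{C_T}(\ca L_1, \ca L_2)$ is representable by a scheme separated and of finite type over $T$.

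For this, set $\ca M = \sheafhom(\ca L_1, \ca L_2) = \ca L_1^\vee \otimes \ca L_2$, a line bundle on $C_T$ flat over $T$. Since $\rho$ is proper and flat of relative dimension $1$, cohomology and base change lets me represent $R\rho_* \ca M$ by a two-term complex of finite locally free $\ca O_T$-modules, so the functor $T' \mapsto \on{Hom}_{C_{T'}}(\ca L_1, \ca L_2) = H^0(C_{T'}, \ca M_{T'})$ is representable by a scheme $\on{Hom}_{C_T}(\ca L_1, \ca L_2)$ that is affine and of finite type over $T$. The isomorphism locus $\on{Isom}_{C_T}(\ca L_1, \ca L_2)$ is then the open subscheme on which the universal homomorphism $\ca L_1 \to \ca L_2$ is nowhere vanishing; being open in an affine finite-type $T$-scheme it is quasi-affine, hence separated and of finite type over $T$.

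Consequently $\Delta_\pi$ is representable by separated schemes of finite type; in particular it is quasi-compact and quasi-separated, so $\pi$ is quasi-separated and the claim follows. The one genuinely substantive point is the representability of $\on{Isom}$ by a finite-type scheme, which I expect to be the main obstacle: it rests on cohomology and base change for the proper flat family $\rho$ together with the openness of the non-vanishing locus. (One should note that this is precisely where total degree $0$ — rather than some compactness of the base — is irrelevant: $\Pictdz$ is neither separated nor quasi-compact over $\frak M$, yet the above argument shows its diagonal remains well-controlled.) Everything else in the argument is formal.
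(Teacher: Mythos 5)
Your proof is correct and follows essentially the same route as the paper: factor through $\frak M$, quote quasi-separatedness of $\frak M$ via the Isom-schemes of prestable curves, and then control the relative diagonal of $\Pictdz \to \frak M$ via isomorphisms of line bundles (the paper simply observes these form a $\bb G_m$, while you spell out the representability of the Isom-scheme via cohomology and base change). No gap; your version is just a more detailed writeup of the paper's two-line argument.
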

\begin{proof}
First we check that $\frak M$ is quasi-separated; equivalently, that the diagonal is qcqs. In other words, if $C/S$ is a prestable curve, then $\on{Isom}_S(C)$ is qcqs over $S$ - but this is well-known. 

Now we show that ${\Pictdz}$ is quasi-separated over $\frak M$. In other words, we fix a prestable curve $C/S$ and a line bundle $\ca L$ on $C$, and look at the automorphisms of $\ca L$ over $C$; but this is just $\bb G_m$. 
\end{proof}

\subsubsection{Piecewise linear functions}

If $C/S$ is a log curve and $\alpha \in \ghost_C^\gp(C)$, the \emph{outgoing slope} at a marked section $c$ of $C/S$ is the image of $\alpha$ in the stalk of the relative characteristic monoid $\ghost_{C/S, s} = \bb N$. 

\begin{definition}
A piecewise-linear (PL) function on a log curve $C/S$ is an element $\alpha \in \ghost_C^\gp(C)$ (cf. \ref{eq:PL_function}) with all outgoing slopes vanishing\footnote{It would be cleaner to work with vertical (`unmarked') log curves, but we will make use of smooth sections of $C/S$ in other places in our arguments, so we do not wish to impose verticality. }. 
\end{definition}

The preimage of $\alpha$ in the exact sequence 
\begin{equation}
1 \to \ca O_C^\times \to \M_C^\gp \to  \ghost_C^\gp \to 1
\end{equation}
defines an associated $\bb G_m$-torsor $\ca O^\times(\alpha)$, which we compactify to a line bundle $\ca O(\alpha)$ by glueing in the $\infty$ section (this is just a choice of sign; it corresponds to $\ca O(-p)$ being an ideal sheaf, rather than its dual). 

The bundle $\ca O(\alpha)$ always has total degree zero, but rarely multidegree $\ul 0$; more precisely, it has multidegree $\ul 0$ if and only if $\ca O(\alpha)$ is a pullback from $S$, if and only if $\alpha$ is constant on geometric fibres.

\subsection{Defining $\LogDR$}
Before defining the logarithmic double ramification cycle it seems useful to summarise the construction of the `usual' double ramification cycle from \cite{Bae2020Pixtons-formula}. Various constructions of double ramification cycles are given in various places in the literature in various levels of generality (e.g. \cite{Holmes2017Extending-the-d,Marcus2017Logarithmic-com,Holmes2017Jacobian-extens,Bae2020Pixtons-formula}). They are mostly\footnote{With the exception of classes constructed by means of admissible covers, which only apply in cases when working with the sheaf of differentials.} equivalent, but descriptions of the relations between the constructions are scattered across various sketches in various papers at various levels of generality\footnote{This is in fairly large part the responsibility of the first-named author.}, making it troublesome to assemble a complete picture. Here we attempt to rectify this by giving a precise and general statement of the relation between the two most widely-used definitions, that of the first-named author by resolving rational maps, and that of Marcus and Wise via tropical divisors (in the form used in \cite{Bae2020Pixtons-formula}).

\subsubsection{Tropical divisors}\label{sec:div}

If $C/S$ is a log curve and $\alpha$ a PL function on $C$, then the line bundle $\ca O_C(\alpha)$ determines a map $S \to \Pictdz$. In this way we have an Abel-Jacobi map from the stack of pairs $(C/S, \alpha)$ to $\Pictdz$. We can see this Abel-Jacobi map as a first approximation of the double ramification cycle, but the map has relative dimension 1 (a section $\alpha$ admits no non-trivial automorphisms, whereas the line bundle $\ca O_C(\alpha)$ has a $\bb G_m$ worth of automorphisms), and hence will not induce a good Chow class on $\Pictdz$. To fix this we need a little more setup. Given a log scheme $S = (S, M_S)$, we write
\begin{equation*}
\mathbb G_m^{\trop}(S) = \Gamma(S, \ghost_S^{gp}), 
\end{equation*}
which we call the tropical multiplicative group; it can naturally be extended to a presheaf on the category $\cat{LSch}_S$ of log schemes over $S$. A \emph{tropical} line on $S$ is a $\bb G_m^{\trop}$ torsor on $S$ for the strict \'etale topology. Then a point of $\cat{Div}$ is a quadruple
\begin{equation}
(C/S, P, \alpha, \ca M)
\end{equation}
where $C/S$ is a log curve, $P$ a tropical line on $S$, $\alpha\colon C \to P$ a morphism over $S$ with zero outgoing slopes, and $\ca M$ is a line bundle on $S$. 
An isomorphism
\begin{equation}
(\pi\colon C\to S, P, \alpha, \ca M) \to (\pi\colon C\to S, P', \alpha', \ca M')
\end{equation}
in $\cat{Div}$ is an isomorphism $\pi^*\ca M(\alpha)  \to \pi^*\ca M'(\alpha)$, and the Abel-Jacobi map $\aj\colon \cat{Div} \to \Pictdz$ sends $(\pi\colon C\to S, P, \alpha, \ca M) $ to $\pi^*\ca M(\alpha)$. 

In \cite{Bae2020Pixtons-formula} we defined $\DRop \in \CHop(\Pictdz)$ to be the fundamental class of the proper log monomorphism $\cat{Div} \to \Pictdz$ (we describe this in more detail in \ref{def:DRop}. )

%
%

\subsubsection{Universal $\sigma$-extending morphisms}

Over the locus of irreducible curves in $\Pictdz$ the notions of total degree and multidegree coincide, so that $\J$ comes with a tautological map from $\Pictdz$. We can think of this as a rational map 
\begin{equation}
\sigma\colon \Pictdz \dashrightarrow \J
\end{equation}
(rational because it is only defined on the open locus of irreducible curves). 

We call a map $t \colon T \to \Pictdz$ of algebraic stacks \emph{$\sigma$-extending} if\footnote{The analogous definition in \cite{Holmes2017Extending-the-d} had the additional assumption that $T$ be normal. At the time this was needed in order to be able to apply \cite[Theorem 4.1]{Holmes2014Neron-models-an} at a certain critical step in the arguments, but since then Marcus and Wise have proven the analogue of \cite[Theorem 4.1]{Holmes2014Neron-models-an} with no regularity assumptions, see \cite[Corollary 3.6.3]{Marcus2017Logarithmic-com}. This can then be use to modify they theory of \cite{Holmes2017Extending-the-d} without a normality assumption. 
Alternatively one can reinstate the condition that $T$ be normal, and all of the subsequent discussion will go through unchanged except that we will have to insert normalisations in various places. By Costello's Theorem \cite{Herr2021Costellos-pushf} this will have no effect on the resulting cycles, but will make things much less readable, which is why we prefer to omit the condition. 
}
\begin{enumerate}
\item The pullback along $t$ of the locus of line bundles on smooth curves is schematically dense in $T$;
\item The rational map $T \dashrightarrow \J$ induced by $\sigma$ extends to a morphism (necessarily unique if exists, by separatedness of $\J$ over $\frak M$). 
\end{enumerate}

One can then show just as in \cite{Holmes2017Extending-the-d} that the category of $\sigma$-extending stacks over $\Pictdz$ has a terminal object, which we denote ${\Pictdz}^\loz$. The natural map 
\begin{equation}\label{eq:PicLoz_to_Jac}
f\colon {\Pictdz}^\loz \to \Pictdz
\end{equation}
is separated, relatively representable by algebraic spaces, of finite presentation, and an isomorphism over the locus of smooth (even treelike) curves, but it is not in general proper. The construction equips it with a map
\begin{equation}\label{map:sigma}
\sigma\colon {\Pictdz}^\loz \to \J. 
\end{equation}

\subsubsection{The functor of points of $\Pictdz^\loz$}\label{rem:exists_PL_function}
One can describe the functor of points (on the category of log schemes) of $\Pictdz^\loz$ in a manner very similar to the definition of $\cat{Div}$. Namely, a point of $\Pictdz^\loz$ is a quadruple
\begin{equation}
(C/S, P, \alpha, \ca L)
\end{equation}
where $C/S$ is a log curve, $P$ a tropical line on $S$, $\alpha\colon C \to P$ a morphism over $S$ with zero outgoing slopes, and $\ca L$ a line bundle on $C$ such that the line bundle $\ca L(\alpha)$ has \emph{multidegree $\ul 0$} on every fibre of $C/S$. 

Given such $(C/S, P, \alpha, \ca L)$, the map $S \to \J$ given by $\ca L(\alpha)$ is an extension of $\sigma$, so by the universal property of $\Pictdz^\loz$ we obtain a map from the stack of such quadruples to $\Pictdz^\loz$. To show this is an isomorphism, we may work locally (so assume $C/S$ to be nuclear in the sense of \cite{holmes2020models}, and smooth over a dense open of $S$), then $P$ can be taken trivial, and it is enough to show that the extension of $\sigma$ is given by a PL function; but this follows from \cite[Corollary 3.6.3]{Marcus2017Logarithmic-com}.

\subsubsection{Comparison}\label{subsec:comparison}
The key actor in \ref{sec:log_fun_class} is the fibre product of the diagram
\begin{equation}
 \begin{tikzcd}
  & \frak M \arrow[d, "e"]\\
  \Pictdz^\loz \arrow[r, "\sigma"] & \J. 
\end{tikzcd}
\end{equation}
From the functor-of-points description of $\Pictdz$ this fibre product is exactly given by tuples $(C/S, P, \alpha, \ca L) \in   \Pictdz^\loz$ such that $\ca L(-\alpha)$ is the pullback of some line bundle on $S$, say $\ca L(-\alpha) = \pi^*\ca M$. Giving the data of $\ca L$ or of $\ca M$ is exactly equivalent, and the tuple $(C/S, P, \alpha, \ca M)$ is exactly a point of $\cat{Div}$; in other words we have a pullback square
\begin{equation}
 \begin{tikzcd}
\cat{Div} \arrow[r] \arrow[d]  & \frak M \arrow[d, "e"]\\
  \Pictdz^\loz \arrow[r, "\sigma"] & \J. 
\end{tikzcd}
\end{equation}
Marcus and Wise show that the composite $\cat{Div} \to \Pictdz$ is proper, and in \cite{Bae2020Pixtons-formula} we define $\DRop$ to be the associated cycle on $\Pictdz$. The full construction of the operational class is a little subtle (see \cite[\S2]{Bae2020Pixtons-formula} for details), but is easy to describe for a smooth stack $S$ mapping to $\Pictdz$. 

\begin{definition}\label{def:DRop}
Let $S$ be a smooth stack and $\phi\colon S \to \Pictdz$ a morphism. Then $\phi$ is lci, so we have a gysin pullback $\phi^!\colon A^*(\cat{Div})\to A^*(\cat{Div}\times_\Pictdz S)$, and following \cite{Skowera2012Proper-pushforw} a proper pushforward $i_*\colon A^*(\cat{Div}\times_\Pictdz S) \to A^*(S)$. Since $S$ is smooth the intersection pairing furnishes a map $\cap\colon A^*(S) \to \CHop(S)$, and we define
\begin{equation}
\phi^*\DRop = \cap(i_*\phi^![\cat{Div}])c\in \CHop(S), 
\end{equation}
where $[\cat{Div}]$ denotes the fundamental class of $\cat{Div}$ as a cycle on itself. 
\end{definition}

\subsubsection{Defining $\LogDR$}\label{sec:LogDR}
We construct the cycle $\LogDR$ in $\LogChow(\Pictdz)$ as hinted at in \cite[\S3.8]{Bae2020Pixtons-formula}. We apply the construction of \ref{sec:log_fun_class}, taking $S = \Pictdz$, $X = \Pictdz^\loz$, $\J = \J$, and $\sigma = \sigma$. We need the natural map 
\begin{equation}
\Pictdz^\loz \times_\J \frak M \to \Pictdz
\end{equation}
to be proper; this can be proven in the same way as in \cite[\S5]{Holmes2017Extending-the-d}, or follows by the comparison to the construction of Marcus-Wise in \ref{subsec:comparison}. 

\begin{definition}\label{def:logDR}
The construction in \ref{sec:log_fun_class} yields a class $\LogDR \coloneqq [\sigma^*e]_{f, \log} \in \LogChow(\Pictdz)$, the \emph{log double ramification cycle}. 
\end{definition}

Comparing the constructions yields
\begin{lemma}
Applying the pushforward $\nu_* \colon \LogChow(\Pictdz) \to \CHop(\Pictdz)$ of \ref{def:pushforward_smooth} to $\LogDR \in \LogChow(\Pictdz)$ recovers the double ramification cycle $\DRop \in \CHop(\Pictdz)$ of \cite{Bae2020Pixtons-formula}. 
\end{lemma}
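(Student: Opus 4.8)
The plan is to unravel both sides on a single sufficiently fine log blowup and then match the resulting Gysin classes. Throughout I write $i\colon \frak M \to \J$ for the regularly embedded zero section (the morphism called $e$ in \ref{sec:log_fun_class}) and $q\colon \cat{Div} \to \Pictdz$ for the proper morphism of \ref{subsec:comparison}. Since both $\LogChow(\Pictdz)$ and $\CHop(\Pictdz)$ are limits over quasi-compact opens, and $\nu_*$ is compatible with restriction to such opens, I would first reduce to a quasi-compact open $U \hra \Pictdz$ over which every stack in sight is of finite type; to keep the notation light I suppress $U$ and argue as though $\Pictdz$ were quasi-compact.

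First I would compute $\nu_*(\LogDR)$ directly from the definitions. By \ref{def:logDR} and the construction of \ref{sec:log_fun_class}, if $\pi\colon \tilde S \to \Pictdz$ is a sufficiently fine log blowup (\ref{def:sufficiently_fine}) with $\tilde S$ smooth, then $\LogDR$ is determined on $\tilde S$ (\ref{lem:determined_on_sufficiently_fine}) by the class $j_* i^![\tilde X]$, where $\tilde X \hra \tilde S$ is the lift of $\Pictdz^\loz$, where $Z = \tilde X \times_{\sigma, \J, i} \frak M$, and where $j\colon Z \hra \tilde S$ is the resulting closed immersion. By \ref{def:pushforward_smooth}, $\nu_*$ is computed by applying $\pi_*$ to this determination, so with $p = \pi \circ j \colon Z \to \Pictdz$ (proper, as the composite of a closed immersion and a log blowup) we obtain
\begin{equation}
\nu_*(\LogDR) = \pi_* j_* i^![\tilde X] = p_*(i^![\tilde X]).
\end{equation}

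Next I would identify $Z$ with a blowup of $\cat{Div}$. Writing $\tilde f\colon \tilde X \to \Pictdz^\loz$ for the log blowup and recalling from \ref{subsec:comparison} that $\cat{Div} = \Pictdz^\loz \times_{\sigma, \J, i} \frak M$, one checks the identity $Z = \tilde X \times_{\Pictdz^\loz} \cat{Div}$, which exhibits a proper morphism $\rho\colon Z \to \cat{Div}$ (the base change of $\tilde f$) satisfying $q \circ \rho = p$. As $\tilde f$ is a log blowup it is proper and birational of degree one, whence $\tilde f_*[\tilde X] = [\Pictdz^\loz]$. Applying the fact that the refined Gysin morphism for $i$ commutes with proper pushforward (\cite[Theorem 2.1.12 (xi)]{Kresch1999Cycle-groups-fo}) to the two cartesian squares stacked over $i$, I get
\begin{equation}
\nu_*(\LogDR) = q_* \rho_*(i^![\tilde X]) = q_*(i^! \tilde f_*[\tilde X]) = q_*(i^![\Pictdz^\loz]).
\end{equation}
Finally, since $i$ is regularly embedded of codimension $g$ and $\cat{Div}$ is pure of the expected codimension $g$ in $\Pictdz^\loz$ (this purity is exactly the geometry underlying the definition of the DR cycle), the refined Gysin class is the fundamental class, $i^![\Pictdz^\loz] = [\cat{Div}]$. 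On the other hand, \ref{def:DRop} applied to the (open immersion of the) smooth stack $\Pictdz$ expresses $\DRop$ as $\cap(q_*[\cat{Div}])$. Comparing the two descriptions yields $\nu_*(\LogDR) = \DRop$, and reassembling over all quasi-compact opens gives the equality in $\CHop(\Pictdz)$.

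I expect the main obstacle to be the bookkeeping in the middle step rather than any deep input: one must verify that the two squares really are cartesian and correctly stacked over the regular embedding $i$, that $p = q \circ \rho$ and that $j$ is genuinely a closed immersion (which rests on the properness of $\cat{Div} \to \Pictdz$ and hence of $Z \to \Pictdz$), and that the operational/bivariant classes produced in \ref{sec:log_fun_class} are precisely the objects to which Kresch's compatibility applies. The identification $i^![\Pictdz^\loz] = [\cat{Div}]$ also deserves care, as it encodes the expected-dimensionality of $\cat{Div}$; but this is part of the very construction of $\DRop$ in \cite{Bae2020Pixtons-formula} and so may be invoked rather than reproved.
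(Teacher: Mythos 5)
Your argument is correct and is exactly the route the paper intends: the paper offers no written proof beyond the phrase ``comparing the constructions,'' with the cartesian square $\cat{Div} = \Pictdz^\loz \times_{\sigma,\J,e}\frak M$ of \ref{subsec:comparison} as the designated input, and your proof is that comparison carried out in detail (restrict to quasi-compact opens, unravel $\nu_*\LogDR$ on a sufficiently fine blowup, push the Gysin class down through the stacked cartesian squares via the compatibility of $i^!$ with proper pushforward, and match against \ref{def:DRop}). The one step carrying genuine content is $e^![\Pictdz^\loz]=[\cat{Div}]$, and you are right that it can be invoked rather than reproved: it follows from the Marcus--Wise structure theory (namely that $\cat{Div}$ is log \'etale over $\frak M$, hence of pure expected codimension $g$ and reduced, and being cut out by a section of the rank-$g$ group scheme $\J$ it is lci, so the refined Gysin class is the fundamental cycle) --- though I would phrase this as an input from \cite{Marcus2017Logarithmic-com}/\cite{Bae2020Pixtons-formula} rather than as ``part of the definition,'' since the definition of $\DRop$ via $[\cat{Div}]$ does not by itself presuppose purity.
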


\subsection{Invariance of $\LogDR$ in twistable families}

Throughout this subsection, $C/S$ is a log curve over a smooth log smooth base, and $\ca L$ is a line bundle on $C$. 

\begin{definition}
We say the pair $(C/S,\ca L)$ is \emph{twistable} if there exists a PL function $\alpha$ on $C$ such that $\ca L(\alpha)$ has multidegree $\ul 0$; we call such an $\alpha$ a \emph{twisting function}. 
\end{definition}
Being twistable is equivalent to the existence of a Cartier divisor on $D$ on $C$ supported over the boundary of $S$ and such that $\ca L(D)$ has multidegree $\ul 0$; see \cite[Theorem 4.1]{Holmes2014Neron-models-an} or \cite[Corollary 3.6.3]{Marcus2017Logarithmic-com}.


\begin{lemma}\label{lem:invariance_7_a}
Let $(C/S, \ca L)$ be twistable with $\alpha$ a twisting function. Write $\phi_\ca L\colon S \to \Pictdz$ for the map induced by $\ca L$, and $\phi_{\ca L(\alpha)}\colon S \to \Pictdz$ for the map induced by $\ca L(\alpha)$. Then
\begin{equation}
\phi_{\ca L}^*\LogDR = \phi_{\ca L(\alpha)} ^* \DRop 
\end{equation}
in $\LogChow(S)$ (where we view $\phi_{\ca L(\alpha)}^* \DRop $ in $\LogChow(S)$ by pullback, cf. \ref{sec:operations_on_log_chow}). 
\end{lemma}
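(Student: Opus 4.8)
The plan is to use the twisting function $\alpha$ to lift $\phi_{\ca L}$ through $f\colon \Pictdz^\loz\to\Pictdz$, and then to check that both sides of the asserted identity compute the \emph{same} honest gysin class on $S$, with no nontrivial log blowup of $S$ required. First I would construct the lift: by the functor-of-points description of $\Pictdz^\loz$ (\ref{rem:exists_PL_function}), the quadruple $(C/S, P_{\mathrm{triv}}, \alpha, \ca L)$ with $P_{\mathrm{triv}}$ the trivial tropical line defines a morphism $\psi\colon S\to\Pictdz^\loz$ (this uses precisely that $\ca L(\alpha)$ has multidegree $\ul 0$), satisfying $f\circ\psi=\phi_{\ca L}$ and $\sigma\circ\psi=\beta$, where $\beta\colon S\to\J$ is the map induced by the multidegree-$\ul 0$ bundle $\ca L(\alpha)$.

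Next I would compute $\phi_{\ca L}^*\LogDR$. Writing $X_S = S\times_{\phi_{\ca L},\Pictdz}\Pictdz^\loz$ with projections $f_S\colon X_S\to S$ and $\sigma_S\colon X_S\to\J$, the construction of \ref{sec:log_fun_class} is compatible with base change along $\phi_{\ca L}$ (this follows from the same commutation of refined gysin pullbacks with proper pushforward used there to prove independence of choices), so $\phi_{\ca L}^*\LogDR = [\sigma_S^*e]_{f_S,\log}$. The lift $\psi$ now furnishes a \emph{section} $s=(\mathrm{id}_S,\psi)\colon S\to X_S$ of $f_S$; since $f_S$ is separated, representable, log \'etale and an isomorphism over the dense locus of smooth curves, $s$ is a birational representable log \'etale morphism, so \ref{lem:cover_gives_same_DR} applies along $s$ and gives $[\sigma_S^*e]_{f_S,\log}=[\beta^*e]_{\mathrm{id}_S,\log}$. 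As the structure morphism is now $\mathrm{id}_S$, no log blowup of $S$ is needed: the construction produces the honest class $j_*e^![S]\in\CHop(S)\subset\LogChow(S)$, where $Z=S\times_{\beta,\J,e}\frak M$, the projection $j\colon Z\hra S$ is a closed immersion (base change of the identity section $e\colon\frak M\hra\J$), and $e^!$ is the associated gysin pullback.

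Finally I would compute $\phi_{\ca L(\alpha)}^*\DRop$ using the comparison square $\cat{Div}=\Pictdz^\loz\times_{\sigma,\J,e}\frak M$ of \ref{subsec:comparison} together with \ref{def:DRop}. The key observation is that $f$ is an isomorphism over the multidegree-$\ul 0$ locus of $\Pictdz$: over a multidegree-$\ul 0$ bundle the only available twisting function is fibrewise constant (if $\ca L'$ and $\ca L'(\alpha)$ both have multidegree $\ul 0$ then $\ca O(\alpha)$ does, forcing $\alpha$ to be fibrewise constant), so the trivial-twist point is the unique preimage. Since $\phi_{\ca L(\alpha)}$ lands in this locus, one identifies $\cat{Div}\times_{\Pictdz}S \cong \cat{Div}\times_{\Pictdz^\loz}S = S\times_{\beta,\J,e}\frak M = Z$, and the commutation of the gysin pullback $\phi_{\ca L(\alpha)}^!$ with $e^!$ yields $\phi_{\ca L(\alpha)}^*\DRop = j_*e^![S]$, the same class obtained above. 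This gives the desired equality in $\LogChow(S)$.

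The main obstacle is concentrated in two bookkeeping-heavy but conceptually routine points. The first is the base-change compatibility $\phi_{\ca L}^*\LogDR=[\sigma_S^*e]_{f_S,\log}$: although morally identical to the well-definedness argument of \ref{sec:log_fun_class}, it must be spelled out by choosing a common sufficiently fine blowup and invoking the commutation of operational gysin maps with proper pushforward. The second is the identification $\cat{Div}\times_{\Pictdz}S=Z$ and the matching of refined gysin classes across it, which genuinely relies on $f$ being an isomorphism over the multidegree-$\ul 0$ locus; establishing that isomorphism carefully (via the characterisation of multidegree-$\ul 0$ PL functions as the fibrewise-constant ones) is the technical heart. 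Everything else reduces to the functor-of-points manipulations and the cited properties of gysin pullbacks.
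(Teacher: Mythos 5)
Your proposal is correct and takes essentially the same route as the paper: both proofs reduce the two sides to gysin classes on $S$ itself computed from the single fibre square $S\times_{\ca L(\alpha),\J,e}\frak M \to S$, and conclude by commutativity of gysin pullbacks. The paper packages your steps 1--3 into the single observation that $\mathrm{id}_S$ is already the universal $\sigma$-extending morphism for the rational map $S\dashrightarrow\J$ induced by $\ca L$ (with extension given by $\ca L(\alpha)$), so your lift $\psi$, the section of $f_S$, and the identification of $\cat{Div}\times_{\Pictdz}S$ over the multidegree-$\ul 0$ locus are exactly the base-change compatibilities that the paper's one-line proof leaves implicit.
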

\begin{proof}
Write $\sigma\colon S \dashrightarrow \J$ for the rational map induced by $\ca L$. Then the identity on $S$ is the universal $\sigma$-extending morphism! More precisely, the extension is given by $\ca L(\alpha)\colon S \to \J$, and it is easily seen to be universal among extensions. We have a pullback diagram
\begin{equation}
 \begin{tikzcd}
  S \times_J e \arrow[r]\arrow[d, "j"] & e \arrow[d, "i"] \\
  S \arrow[r, "\sigma"] & \J, 
\end{tikzcd}
\end{equation}
and the definitions of $\phi_{\ca L}^*\LogDR$ and $\phi_{\ca L(\alpha)} ^* \DRop$ simplify to
\begin{equation}
\phi_{\ca L}^*\LogDR = j_*i^![S] \;\;\; \text{and}\;\;\; \phi_{\ca L(\alpha)} ^* \DRop = j_*\sigma^![e], 
\end{equation}
which are equal since $\sigma^![e] = i^![s]$ (commutativity of the intersection pairing). 
%
\end{proof}

\begin{remark}
If $(C/S,\ca L)$ is twistable then $\alpha$ is not unique, but the line bundle $\ca L(\alpha)$ is uniquely determined up to pullback from $S$. Hence $\phi_{\ca L(\alpha)} ^* \DRop $ does not depend on the choice of $\alpha$. 
\end{remark}

%


\begin{remark}
Unfortunately the notion of twistable families seems a little too restrictive; not enough of them seem to exist to determine $\LogDR$ from $\DRop$ (though we have not written down a proof). Because of this we now introduce a weaker notion. 
\end{remark}

\begin{definition}\label{def:almost_twistable}
We say $(C/S, \ca L)$ is \emph{almost twistable} if there exist a PL function $\alpha$ on $C$ and a dense open $U \sub S$ such that:
\begin{enumerate}
\item
the restriction of $\alpha$ to $U$ is a twisting function; 
\item for any trait $T$ with generic point $\eta$ and any map $T \to S$ sending $\eta$ to a point in $U$, if the map $\eta \to \J$ induced by $\ca L(\alpha)$ can be extended to a map $T \to \J$ then the map $T \to S$ factors via $U$. 
\end{enumerate}
\end{definition}

%
\begin{remark}
%
Condition (2) implies that $U$ is the largest open of $S$ such that $(C_U/U, \ca L|_{C_U}$) is twistable. However, it is not equivalent to this; the definition also captures the possibility that the family might become twistable after some blowup.

Condition (2) is equivalent to the following: suppose $u$ lies in the closure of $U$ in $\Pictdz$, and that there exists a point in the closure of the unit section of $\Pictdz$ with the same multidegree as $u$; then $u \in U$. In fact for the invariance below it would be enough to replace this with the weaker condition that the intersection of the closure of $U$ with the closure of the unit section is contained in $U$. 
\end{remark}
\begin{lemma}\label{lem:invariance_7_b}
Let $(C/S, \ca L)$ be \emph{almost} twistable with $\alpha$ a twisting function. Write $\phi_\ca L\colon S \to \Pictdz$ for the map induced by $\ca L$, and $\phi_{\ca L(\alpha)}\colon S \to \Pictdz$ for the map induced by $\ca L(\alpha)$. Then
\begin{equation}
\phi_{\ca L}^*\LogDR = \phi_{\ca L(\alpha)} ^* \DRop 
\end{equation}
in $\LogChow(S)$. 
\end{lemma}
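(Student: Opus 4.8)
The plan is to deduce the statement from the genuinely twistable case (\ref{lem:invariance_7_a}) by a support argument. Since $\LogChow(S)$ is a limit over quasi-compact opens, I may assume $S$ quasi-compact. Restricting to the dense open $U$ of \ref{def:almost_twistable}, the pair $(C_U/U, \ca L|_{C_U})$ is twistable with twisting function $\alpha|_U$, so \ref{lem:invariance_7_a} already gives $\phi_{\ca L}^*\LogDR = \phi_{\ca L(\alpha)}^*\DRop$ after restriction to $U$. The only danger is that the difference of the two classes is supported on the boundary $S\setminus U$; so the real work is to control the supports of both classes and show their difference vanishes there. My strategy is to realise \emph{both} classes, on a single sufficiently fine smooth log blowup, as the proper pushforward of the \emph{same} bivariant class living on a closed substack $Z$ that in fact lies entirely over $U$.

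Concretely, I would first pass (via \ref{lem:compactify_to_log_blowup}, cf. \ref{def:sufficiently_fine}) to a sufficiently fine smooth log blowup $\pi\colon \widetilde S\to S$ for $f\colon X_S\coloneqq \Pictdz^\loz\times_{\Pictdz}S\to S$, with lift $\widetilde X\hookrightarrow\widetilde S$, so that $\phi_{\ca L}^*\LogDR$ is determined on $\widetilde S$ as $j_*i^![\widetilde X]$ with $j\colon Z\hookrightarrow\widetilde S$ a closed immersion and $Z=\widetilde X\times_{\sigma,\J,i}e$ (see \ref{sec:log_fun_class}, \ref{def:logDR}). On the other side, since $\widetilde S$ is smooth, \ref{def:DRop} computes $\pi^*\phi_{\ca L(\alpha)}^*\DRop$ as $\cap\,k_*\psi^![\cat{Div}]$, where $\psi=\phi_{\ca L(\alpha)}\circ\pi$ and $k\colon \cat{Div}\times_{\Pictdz}\widetilde S\to\widetilde S$. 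Using the pullback square of \ref{subsec:comparison} ($\cat{Div}=\Pictdz^\loz\times_{\J}\frak M$) one gets a canonical identification over $\widetilde U\coloneqq\pi^{-1}(U)$,
\begin{equation*}
\cat{Div}\times_{\Pictdz}\widetilde U \;\cong\; \widetilde U\times_{\J}e \;\cong\; Z|_{\widetilde U},
\end{equation*}
because over $\widetilde U$ the lift $\widetilde X|_{\widetilde U}\cong\widetilde U$ is the universal $\sigma$-extension given by $\ca L(\alpha)$ (exactly as in the proof of \ref{lem:invariance_7_a}). Under this identification the two bivariant classes $i^![\widetilde X]$ and $\psi^![\cat{Div}]$ agree by the commutativity-of-gysin-maps computation of \ref{lem:invariance_7_a}.

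The main obstacle is to upgrade this agreement over $\widetilde U$ to an equality on all of $\widetilde S$, and this is where condition (2) of \ref{def:almost_twistable} enters. I would prove that both closed substacks $Z$ and $k(\cat{Div}\times_{\Pictdz}\widetilde S)$ actually lie over $U$, i.e. are contained in $\widetilde U$. Both are proper over $\widetilde S$ (since $X_S\times_{\J}e$ is proper over $S$ and $\cat{Div}\to\Pictdz$ is proper), so their images are closed and one may test containment valuatively. Given a point of such a locus lying over $s\in S$, choose a trait $T\to\widetilde S$ through it whose generic point $\eta$ lands in $\widetilde U$; the chosen point encodes an extension of $\sigma$ along $T$, i.e. a map $T\to\J$, and over $\widetilde U$ this extension is forced to be the one induced by $\ca L(\alpha)$ (by the universality used above). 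Thus $\ca L(\alpha)|_\eta\colon\eta\to\J$ extends over $T$, and condition (2) forces $T\to S$ to factor through $U$, whence $s\in U$; equivalently this is the reformulation that $\overline U\cap\overline{(\text{unit section})}\subseteq U$ noted after \ref{def:almost_twistable}. Granting that the delicate point is that every point of these loci is a specialisation of a point over $\widetilde U$, so that such a trait exists, both loci are contained in the open $\widetilde U$; being equal there, they coincide as closed substacks of $\widetilde S$, and the two pushforwards of the (now identical) bivariant classes agree in $\CHop(\widetilde S)$. This yields $\phi_{\ca L}^*\LogDR=\phi_{\ca L(\alpha)}^*\DRop$ in $\LogChow(S)$, as desired.
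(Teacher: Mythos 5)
Your high-level plan (reduce to the twistable case over $U$ and then control supports using condition (2) of \ref{def:almost_twistable}) is reasonable, but it is not the paper's route, and as written it has a genuine gap at exactly the point you flag as "the delicate point" and then grant without proof. The issue is asymmetric between your two loci. For $Z=\tilde X\times_{\J}e$ the argument can be completed: $\tilde X$ is \emph{open} in $\tilde S$, so any trait through a point of $Z$ with generic point in the dense open $\tilde U$ automatically factors through $\tilde X$, hence carries a genuine extension $T\to\J$ of $\ca L(\alpha)|_\eta$, and condition (2) applies. But for the $\DRop$ side the locus $\cat{Div}\times_{\Pictdz}\tilde S$ is only \emph{proper} over $\tilde S$, not open in it; a trait $T\to\tilde S$ through a point of its image with generic point in $\tilde U$ need not lift to $\cat{Div}\times_{\Pictdz}\tilde S$, and a single point of $\cat{Div}$ over the closed point of $T$ does not by itself "encode an extension of $\sigma$ along $T$" — you would need to extend the piecewise-linear datum over the whole trait. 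In particular your argument does not exclude irreducible components of $\cat{Div}\times_{\Pictdz}\tilde S$ lying entirely over $\tilde S\setminus\tilde U$, and such components would contribute to $k_*\psi^![\cat{Div}]$. Closing this requires an extension statement of the type of \cite[Lemma 4.3]{Holmes2017Extending-the-d} (used elsewhere in the paper, in \ref{lem:extend_PL_after_alteration}), not just condition (2) applied naively.

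The paper avoids the support analysis altogether. Since $\ca L(\alpha)$ has total degree $0$ everywhere, it defines a morphism $\bar\sigma\colon S\to\bar\J$ on all of $S$ (no blowup of $S$ is needed: $U\hra S$ is itself the universal $\sigma$-extending morphism, already a strict open immersion into the smooth $S$). Condition (2) is then exactly the statement that the square with vertical arrows $U\hra S$ and $\J\hra\bar\J$ and horizontal arrows $\sigma$, $\bar\sigma$ is Cartesian, whence $U\times_{\J}e=S\times_{\bar\J}e$. This single fibre product is automatically closed in $S$ and computes both $\phi_{\ca L}^*\LogDR=j_*i^![U]$ and $\phi_{\ca L(\alpha)}^*\DRop=j_*\bar\sigma^![e]$, which agree by commutativity of Gysin pullbacks. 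If you want to salvage your approach, you should either supply the missing density/extension argument for the $\cat{Div}$ locus, or better, replace it with the factorisation through $\bar\J$.
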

\begin{proof}
Write $\sigma\colon S \dashrightarrow \J$ for the rational map induced by $\ca L$. Then the inclusion $U \hra S$ is the universal $\sigma$-extending morphism. More precisely, the extension is given by $\ca L(\alpha)\colon U \to \J$, and the second property of \ref{def:almost_twistable} shows it to be universal among extensions. Since $\ca L(\alpha)$ is of total degree $0$ over the whole of $S$, it defines a map $\bar\sigma\colon  S \to \bar \J$ over the whole of $S$. Consider the diagram 
\begin{equation}
 \begin{tikzcd}
  U \times_\J e \arrow[r] \arrow[d] \arrow[dd, bend right, "j", swap]& e \arrow[d] \\
  U \arrow[d, hook] \arrow[r, "\sigma"] & \J \arrow[d, hook]\\
  S \arrow[r, "\bar \sigma"] & \bar J
\end{tikzcd}
\end{equation}
where both squares are pullbacks (the top by construction, the bottom by the defining property of $U$), so that $U \times_\J e = S \times_{\bar \J} e$. 
In the notation of \ref{sec:log_fun_class} we take $S = \tilde S$ and $X = \tilde X = U$. Then the definitions of $\phi_{\ca L}^*\LogDR$ and $\phi_{\ca L(\alpha)} ^* \DRop $ simplify to 
\begin{equation}
\phi_{\ca L}^*\LogDR = j_*i^![U]\;\;\;  \text{and}\;\;\; \phi_{\ca L(\alpha)} ^* \DRop = j_*\bar\sigma^![e] = j_*\sigma^![e], 
\end{equation}
which are equal by the commutativity of the intersection pairing. 
\end{proof}

The hard work remaining in this paper is to show that there are `enough' almost-twistable families for \ref{lem:invariance_7_b} to determine $\LogDR$ from $\DRop$.

\subsection{Extending piecewise-linear functions}

Let $C/S$ be a log curve. The key to showing the existence of enough almost-twistable families will be to extend PL functions over open subsets of $S$ to PL functions over the whole of $S$, perhaps after some monoidal alteration.

\begin{lemma}\label{lem:regular_semistable_model}
Let $C/S$ be a log curve with $S$ a smooth log smooth log algebraic stack. Then there exist
\begin{enumerate}
\item
a monoidal alteration $\tilde S \to S$;
\item a subdivision $\tilde C \to C \times_S \tilde S$
\end{enumerate}
with $\tilde C/\tilde S$ a log curve and $\tilde C$ regular. 
\end{lemma}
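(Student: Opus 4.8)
The plan is to reduce the statement to a combinatorial problem about subdividing cone complexes, solve it locally at the nodes, and then glue using a functorial toroidal resolution; the global gluing over the stack $S$ will be the main difficulty.

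First I would record the local structure. Since $S$ is smooth log smooth over $\field$ and $C\to S$ is a log curve, the composite $C \to \Spec\field$ is log smooth, so $C$ is log regular in the sense of \cite{Nizio2006Toric-singulari}; its failure to be \emph{regular} is therefore entirely encoded in the characteristic monoids $\ghost_C$. Working \'etale-locally on $S$ in atomic (or nuclear) charts, so that the tropicalisation is a genuine tropical curve $\Gamma$ metrised by $\ghost_S$, the only non-regular points of $C$ lie at the nodes. At a node $e$ the characteristic monoid is the pushout $\ghost_S \oplus_{\bb N} \bb N^2$ along the diagonal $\bb N \to \bb N^2$ and the edge-length map $\bb N \to \ghost_S$, $1\mapsto \delta_e$. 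Dualising the pushout, the associated cone is the fibre product $\sigma_C = \sigma_S \times_{\bb R_{\ge 0}} \bb R_{\ge 0}^2$, where $\sigma_S\to \bb R_{\ge 0}$ is pairing against $\delta_e$ and $\bb R_{\ge 0}^2 \to \bb R_{\ge 0}$ is the sum; and $C$ is regular at $e$ exactly when $\sigma_C$ is unimodular. So the lemma reduces to making all such cones smooth, using only moves that preserve the structure of a family of log curves.

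Two moves are available. A subdivision of the cone complex $\Sigma_S$ of the base is realised by a log blowup (or, for stacky refinements, a root stack) $\tilde S \to S$, i.e. a monoidal alteration. Subdividing a tropical edge of $\Gamma$ by inserting $2$-valent vertices is realised by a subdivision $\tilde C \to C\times_S \tilde S$ which keeps $\tilde C/\tilde S$ a log curve of relative dimension $1$ (geometrically this glues chains of $\bb P^1$s into the fibres). I would then run the toroidal resolution of $\sigma_C$ \emph{respecting} the projection to $\sigma_S$: first subdivide $\sigma_S$ until it is smooth and $\delta_e$ is unimodular, and then over each resulting smooth cone the fibre is an interval of integral length, which is cut into unit sub-intervals by inserting vertices. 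One checks on the two basic local models --- the surface $A_n$-singularity $xy=s^{n+1}$, resolved by inserting $n$ vertices, and the threefold node $xy=s_1s_2$, resolved by splitting the edge of length $f_1+f_2$ into edges of lengths $f_1$ and $f_2$ --- that a unimodular subdivision of exactly this shape always exists, so that $\tilde C$ becomes regular.

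The main obstacle is global: the subdivisions chosen at different nodes and in different charts must be compatible so as to glue over the (non-quasi-compact, non-Deligne--Mumford) stack $S$, and the fibre subdivisions must be cut out by globally defined piecewise-linear data. This is precisely where I would invoke a \emph{canonical, functorial} resolution: by \cite{Illusie2014Gabbers-modific} there is a functorial log-regular resolution, and the canonical semistable-reduction algorithm of \cite{Adiprasito2018Semistable-redu} produces a monoidal alteration (in general involving root stacks, which is exactly why we allow these rather than only log blowups). Functoriality under strict smooth pullback guarantees that the local resolutions agree on overlaps and descend along a smooth cover of $S$, and, just as in the gluing argument for \ref{lem:compactify_to_log_blowup}, the compatible local alterations and subdivisions then glue to the required global $\tilde S \to S$ and $\tilde C \to C\times_S \tilde S$. (One may first apply \ref{lem:simplifying_blowups} to reduce to the simple, strict normal crossings case, so that the strata and their intersections are well behaved throughout.)
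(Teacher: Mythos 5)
Your proposal is correct and ultimately rests on the same key ingredient as the paper's proof: the canonical, functorial monoidal resolution (semistable reduction) of \cite{Adiprasito2018Semistable-redu}, with functoriality under strict smooth pullback used to descend from schemes to the stack $S$ and to see that $\tilde C$ is regular. The paper's proof is just the two-line citation of their Theorems 4.4 and 4.5; your local combinatorial analysis at the nodes is a correct (and welcome) expansion of what that citation is doing, not a different argument.
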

\begin{proof}
This follows from \cite{Adiprasito2018Semistable-redu}. More precisely, their Theorem 4.4 gives a canonical monoidal resolution over schemes, which therefore applies to stacks. The argument in the proof of their Theorem 4.5 then shows that this monoidal resolution has $\tilde C$ regular. 
\end{proof}
After applying the above lemma we will show that PL functions always extend. We start by considering the case where the base $S$ is very small (\emph{nuclear} in the sense of \cite{holmes2020models}), after which we will glue to a global solution. 
\begin{lemma}\label{lem:extend_PL_nuclear}
Let $C/S$ be a regular log curve over a (regular) log regular base, with $C/S$ nuclear. Let $U \hra S$ be strict dense open and let $\alpha$ be a PL function on $C_U/U$. Then we construct an extension $\bar \alpha$ to a PL function on $C/S$, and this construction is compatible with strict open  base-change. 
\end{lemma}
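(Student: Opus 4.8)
The plan is to translate the extension problem into a question about the combinatorial (tropical) data underlying $\alpha$, which the nuclearity hypothesis makes globally available, and then to extend that data by the canonical rule of demanding triviality in the boundary directions of $S$ that do not meet $U$. Recall that since $C/S$ is nuclear the base $S$ is atomic, so that $Q \coloneqq \ghost_S(S)$ computes the stalk of the characteristic on the (unique) closed stratum, and $C/S$ carries a globally defined dual graph $G = (V,E)$ whose edges $e$ are labelled by smoothing parameters $\delta_e \in Q$ (see \cite{holmes2020models}). Under this dictionary a PL function on $C/S$ is the same datum as an assignment of a value $a(v) \in \ghost_S^\gp(S) = Q^\gp$ to each vertex and an integer slope $s_e$ to each oriented edge, subject to the relation $a(v') - a(v) = s_e\,\delta_e$ along each edge $e$ from $v$ to $v'$, together with the vanishing of all outgoing slopes at the legs. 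I would first record this dictionary carefully, as it turns the statement into a purely combinatorial extension problem.

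Second, I would analyse the restriction to $U$. Writing the prime boundary divisors of $S$ as $(D_i)_i$, nuclearity and regularity give a canonical identification $Q^\gp = \bigoplus_i \bb Z\,[D_i]$; call $D_i$ \emph{invisible} if $D_i \cap U = \emptyset$, and \emph{visible} otherwise. Since an invisible divisor becomes invertible on $U$ while every visible $D_i$ meets $U$ in a dense open (hence at its generic point), restriction identifies $\ghost_U^\gp(U)$ with the visible summand $\bigoplus_{\text{visible}} \bb Z\,[D_i]$ of $Q^\gp$. Correspondingly an edge $e$ of $G$ is nodal over the generic locus of $U$ exactly when $\delta_e$ is visible; when $\delta_e$ is invisible the node is smoothed over $U$ and its two endpoints are identified there. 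Thus the data of $\alpha$ over $C_U$ is precisely the value/slope data of $C/S$ restricted to the visible part.

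The construction then reads off directly: lift each value $a_U(v) \in \ghost_U^\gp(U)$ to the unique $a(v) \in Q^\gp$ supported on the visible divisors (zero invisible component), keep the slope $s_e$ of each visible edge, and assign slope $0$ to every invisible edge. Along an invisible edge the two endpoints carry the same value over $U$, so the relation $a(v') - a(v) = 0\cdot\delta_e$ holds; along a visible edge the relation holds after restriction to $U$ and, since both sides lie in the visible summand on which restriction is injective, it already holds in $Q^\gp$. Hence the data defines a genuine PL function $\bar\alpha$ on $C/S$ restricting to $\alpha$. Because each step — the identification of $Q$, the visible/invisible splitting, and the zero-extension — is natural for strict open immersions $S' \hra S$ (prime divisors restrict to prime divisors or to $\emptyset$, and visibility is preserved), the construction commutes with strict open base change.

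The main obstacle I expect is the passage from ``combinatorial data that extends'' to ``a global section $\bar\alpha \in \ghost_C^\gp(C)$'': one must check that the canonically extended value/slope data really glues to an honest PL function over the deeper strata of $S$ and stays integral there. This is exactly where nuclearity (a single, globally defined dual graph with globally defined smoothing parameters, so that the combinatorial data is unambiguous and the local charts at nodes agree) and the regularity of $C$ (so that each $\delta_e$ is a genuine, un-thickened generator and the slopes remain integral) do the real work; the density of $U$ enters only to guarantee that the visible relations determine, and are consistent with, the extended ones. The remaining bookkeeping — that invisible edges really smooth over $U$, and that the relation along a visible edge involves only visible directions — is routine given the nuclear model.
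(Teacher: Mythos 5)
Your proposal is correct and is essentially the paper's own argument: both decompose the characteristic monoid at each stratum $z$ as $\bigoplus_{i \in N_z} \bb N\cdot D_i$, take the components of $\alpha$ over the generic points of the boundary divisors meeting $U$ (which lie in $U$ by density), put zero on the remaining divisors, and verify the edge relations using that regularity of $C$ forces each edge length to be a single $D_i$, with the same naturality argument for strict open base change. The one quibble is that your identification of $\ghost_U^\gp(U)$ with the visible summand of $Q^\gp$ can fail if some $D_i \cap U$ is disconnected, but nothing in the construction depends on that global statement --- only the stalkwise decomposition at the generic points of the visible $D_i$ is used.
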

\begin{proof}
Let $r$ be the rank of $\ghost_{S,s}$, and let $D_1, \dots, D_r$ be the divisorial strata of the boundary of $S$. Let $\Gamma$ be the graph of $C/S$ over the closed stratum, and let $\Gamma_i$ be the graph over the generic point of $D_i$ (obtained by contracting those edges of $\Gamma$ whose lengths differ from $D_i$). 

On each $D_i$ with non-empty intersection with $U$ we equip $\Gamma_i$ with the PL function from $\alpha$, and for the other $D_i$ we put the zero PL function. 

Now let $z$ be a stratum of $S$, with graph $\Gamma_z$, and let $N_z \sub \{1, \dots, r\}$ be such that $\overline{\{z\}} = \bigcap_{i \in N_z} D_i$. Then $\ghost_{S,z} = \bigoplus_{i \in N_z} \bb N\cdot D_i$; write $f_i\colon \bb N\cdot D_i \to \ghost_{S,z}$ for the natural inclusion. Let $v$ be a vertex of $\Gamma_z$, and for each $i \in N_z$ let $v_i$ be its image in $\Gamma_i$ under specialisation. Then we define
\begin{equation}
\bar\alpha(v) = \sum_{i \in N_z} f_i(\alpha(v_i)). 
\end{equation}
To check that $\bar \alpha$ is a PL function on $\Gamma_z$, suppose that $e$ is an edge of $\Gamma_z$ between vertices $u$ and $v$. By regularity of $C$ we know that the length of $e$ is $D_i$ for some $i \in N_z$; suppose it is $D_1$. Then $f_i(u) = f_i(v)$ for every $i \neq 1$, and $D_1 \mid \alpha(f_1(u)) - \alpha(f_1(v))$. It is easy to see that $\bar\alpha$ restricts to $\alpha$ over $U$. 

Suppose that $S'\to S$ is a strict open map such that $C_{S'}/S'$ is also nuclear and $C_{S'}$ is regular. Let $s'$ be the closed stratum of $S'$; it is enough to check the result for the restriction of $\bar \alpha$ to $\Gamma_{s'}$. Let $N' \sub \{, \dots, r\}$ be the set of those $D_i$ meeting the image of $s'$. Then each of those $D_i$ meet the image of $S'$, and their pullbacks are exactly the divisorial strata on $S'$ (so in particular the rank of $\ghost_{S', s'}$ is $\# N'$). Then $\bar \alpha$ on $\Gamma_{s'}$ is constructed by interpolating the values of $\alpha$ on the $D_i $ for $i \in N'$, regardless of whether we compute this on $S$ or on $S'$; in particular, these give the same result. 
\end{proof}

\begin{lemma}\label{lem:extend_pl}
Let $C/S$ be a log curve with $C$ (and hence $S$) regular log regular, $S$ a log algebraic stack. Let $U \hra S$ be a strict dense open immersion and $\alpha$ a PL function on $C_U$. Then there exists a PL function $\bar \alpha$ on $S$ restricting to $\alpha$. 
\end{lemma}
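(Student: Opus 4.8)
The plan is to glue the nuclear case \ref{lem:extend_PL_nuclear} to a global extension, using that PL functions form a sheaf. Recall that, writing $\pi\colon C \to S$, a PL function on $C$ is a global section of the subsheaf $\ca P \sub \pi_*\ghost_C^\gp$ on $S$ cut out by the vanishing of all outgoing slopes; this is a strict-\'etale-local condition, so $\ca P$ is a sheaf and a global PL function may be produced by exhibiting local PL functions that agree on overlaps.

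First I would cover $S$ by \emph{nuclear} pieces in the sense of \cite{holmes2020models}: a strict cover $(S_i \to S)_i$ (strict \'etale, or strict smooth if $S$ is a genuine Artin stack) with each $C_{S_i}/S_i$ nuclear. Since $C$ is regular and regularity is preserved under strict base change, each $C_{S_i}$ is regular, so \ref{lem:extend_PL_nuclear} applies and produces a PL function $\bar\alpha_i$ on $C_{S_i}$ extending the restriction of $\alpha$ to $C_{U \cap S_i}$.

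It remains to see that the $\bar\alpha_i$ descend, i.e. agree on overlaps. The key point is that the construction of \ref{lem:extend_PL_nuclear} is \emph{canonical}: the extension is given there by the explicit interpolation formula $\bar\alpha(v) = \sum_{i \in N_z} f_i(\alpha(v_i))$, which refers only to the tropicalisation of $C_{S_i}/S_i$ --- the graphs over the strata, the characteristic monoids $\ghost_{S_i,z}$, and the values of $\alpha$. This data is unchanged under strict base change, so the construction is compatible with any strict base change (not merely the strict open immersions for which it is recorded in \ref{lem:extend_PL_nuclear}). Hence the two pullbacks of the $\bar\alpha_i$ to each fibre product $S_i \times_S S_j$ coincide: covering $S_i \times_S S_j$ by further nuclear pieces $W$, both $\bar\alpha_i|_{C_W}$ and $\bar\alpha_j|_{C_W}$ equal the canonical extension computed directly on $W$. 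The $\bar\alpha_i$ therefore satisfy the cocycle condition and glue to a global PL function $\bar\alpha$ on $C$, which restricts to $\alpha$ over $U$ because each $\bar\alpha_i$ does.

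The main obstacle is precisely this descent step, and the only real content is upgrading the compatibility of \ref{lem:extend_PL_nuclear} from strict \emph{open} base change to arbitrary strict base change: gluing along an \'etale or smooth cover forces us to compare the local extensions under the projections from the fibre products, which are not open immersions. I expect this to be settled cleanly by inspecting the proof of \ref{lem:extend_PL_nuclear} and noting that the interpolation formula depends only on strict-base-change-invariant combinatorial data, so that no further work beyond this observation is needed.
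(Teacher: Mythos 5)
Your proposal is correct and is essentially the paper's argument: the paper's proof is the single sentence that the nuclear-case extension of \ref{lem:extend_PL_nuclear} is canonical and compatible with smooth base-change, hence descends to stacks, which is exactly your cover-by-nuclear-pieces-and-glue strategy. Your additional observation --- that the interpolation formula in \ref{lem:extend_PL_nuclear} depends only on strict-base-change-invariant combinatorial data, so the compatibility recorded there for strict open maps extends to the strict smooth projections from fibre products --- is precisely the point the paper leaves implicit.
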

\begin{proof}
In \ref{lem:extend_PL_nuclear} we give a canonical choice of extension in the case where $C/S$ is nuclear, and these are compatible with smooth base-change, so descend to algebraic stacks. 
\end{proof}

\subsection{$\LogDR$ from $\DR$}\label{sec:LogDR_from_DR}

We wish to compute $\LogDR$ in $\LogChow(\Pictdz)$. 
Let $i\colon S \hra \Pictdz$ be a strict open immersion with $S$ quasi-compact, and write $C/S$ for the universal curve and $\ca L$ on $C$ for the universal line bundle. 

\begin{lemma}\label{lem:extend_PL_after_alteration}
There exist
\begin{enumerate}
\item
a monoidal alteration $\psi\colon \tilde S \to S$;
\item a subdivision $\tilde C$ of $C\times_S \tilde S$;
\end{enumerate}
such that the pair $(\tilde C/\tilde S, \psi^*\ca L)$ is almost twistable. 
\end{lemma}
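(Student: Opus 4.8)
The plan is to place ourselves in the situation of \ref{lem:regular_semistable_model} and then produce the twisting function by extending, over the boundary, the canonical twist that exists on the treelike locus. First I would apply \ref{lem:regular_semistable_model} to the universal curve $C/S$, producing a monoidal alteration $\psi\colon \tilde S \to S$ and a subdivision $\tilde C \to C \times_S \tilde S$ with $\tilde C$ regular (and log regular). After replacing $C/S$, $\ca L$ by $\tilde C/\tilde S$, $\psi^*\ca L$, it suffices to exhibit a PL function $\bar\alpha$ on $\tilde C$ and a dense open $U \hra \tilde S$ verifying the two conditions of \ref{def:almost_twistable}. Note that $\psi^*\ca L$ still has total degree $0$ on every fibre, so it induces a morphism $\bar\sigma\colon \tilde S \to \bar\J$ over the whole of $\tilde S$.

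Next I would construct the twist. Let $V \hra \tilde S$ be the open locus of treelike curves; it contains the locus of smooth curves and is therefore dense. Over $V$ the relevant connected component of the Jacobian is proper, so $\psi^*\ca L|_{C_V}$ admits a twisting function $\alpha_V$, unique up to a fibrewise constant, with $\psi^*\ca L(\alpha_V)$ of multidegree $\ul 0$; I fix such an $\alpha_V$ as a genuine PL function on $C_V$. Applying \ref{lem:extend_pl} — which is available because $\tilde C$ is regular — extends $\alpha_V$ to a PL function $\bar\alpha$ on all of $\tilde C$. By the explicit interpolation underlying \ref{lem:extend_PL_nuclear}, the restriction of $\bar\alpha$ over the generic point of each boundary divisor of $\tilde S$ is the correct twist: trivial over the non-separating divisors (where $\psi^*\ca L$ already has multidegree $\ul 0$) and equal to the unique tree-twist over the separating ones. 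I then set $U = \{\, s \in \tilde S : \psi^*\ca L(\bar\alpha)|_{C_s} \text{ has multidegree } \ul 0\,\}$, equivalently $U = \bar\sigma^{-1}(\J)$ under $\J \hra \bar\J$ once $\bar\sigma$ is computed using $\psi^*\ca L(\bar\alpha)$. This $U$ is open and contains $V$, hence is dense, and by construction $\bar\alpha|_U$ is a twisting function, which gives condition (1) of \ref{def:almost_twistable}.

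The main obstacle is condition (2): given a trait $T \to \tilde S$ with generic point $\eta$ mapping into $U$ and such that the induced map $\eta \to \J$ extends to $T \to \J$, I must show that $T \to \tilde S$ factors through $U$, that is, that $\psi^*\ca L(\bar\alpha)|_T$ already has multidegree $\ul 0$ at the closed point. This is exactly the statement that, after the semistable reduction of \ref{lem:regular_semistable_model}, the universal $\sigma$-extending (that is, $\loz$) model is realised by the open immersion $U \hra \tilde S$. The heart of the matter — and precisely what separates \emph{twistable} from \emph{almost twistable} — is to rule out the possibility that the extension to $\J$ is realised by some twist \emph{not} seen by $\bar\alpha$. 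Here the regularity of $\tilde C$ is essential: line bundles on $C_T$ are controlled by divisors supported on the regular special fibre, the extension to $\J$ is unique by separatedness of $\J \to \frak M$, and the interpolation property of $\bar\alpha$ forces $\psi^*\ca L(\bar\alpha)|_T$ to realise that unique extension. With condition (2) in hand the family $(\tilde C/\tilde S, \psi^*\ca L)$ is almost twistable, and \ref{lem:invariance_7_b} then computes $\LogDR$ from $\DRop$ on it.
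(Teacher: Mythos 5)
Your overall shape (semistable reduction, extend a PL function from a dense open via \ref{lem:extend_pl}, verify the two conditions of \ref{def:almost_twistable}) matches the paper's, but the order of operations is wrong in a way that breaks condition (2). You apply \ref{lem:regular_semistable_model} directly to $C/S$ and then take $U$ to be the multidegree-$\ul 0$ locus of $\psi^*\ca L(\bar\alpha)$ for a $\bar\alpha$ interpolated from the treelike locus. Condition (2) demands a valuative maximality of $U$: for a trait $T \to \tilde S$ with generic point in $U$, if the map to $\J$ extends over $T$ then $T$ factors through $U$. Your $U$ need not have this property, because the vertical twists available on $C_T$ (PL functions valued in $\ghost_{T}\cong\bb N$) are genuinely more plentiful than those available near the closed point $s$ of its image (PL functions valued in $\ghost_{\tilde S,s}$). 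Concretely, take $S \sub \Mbar_{1,2}$, $\ca L = \ca O(2p_1-2p_2)$, and $x$ the $2$-gon with $p_i$ on $Y_i$. The universal curve is already regular, so \ref{lem:regular_semistable_model} may legitimately return the identity; your $\bar\alpha$ is then $0$ and $U = S\setminus\{x\}$. At $x$ one has $\ghost_{S,x}=\bb N\ell_1\oplus\bb N\ell_2$, and any PL function on the fibre graph must have slope an integer multiple of both $\ell_1$ and $\ell_2$ along the two edges, hence is constant; so nothing shifts the multidegree $(2,-2)$ and $x\notin U$. But along a trait $T$ through $x$ with both $\ell_i\mapsto 1$, the component $Y_1\sub C_T$ is Cartier and $\ca L(Y_1)$ has multidegree $\ul 0$, so $\eta\to\J$ extends to $T\to\J$ while $T$ does not factor through $U$. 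Your closing appeal to regularity of $\tilde C$ plus separatedness of $\J$ does not close this: uniqueness only says the extension, if any, is $\psi^*\ca L(\bar\alpha + D)$ for some vertical divisor $D$ on $C_T$, and nothing forces $D$ to be fibrewise constant.

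The paper's proof avoids this by constructing $U$ from the universal $\sigma$-extending stack \emph{before} doing semistable reduction: it forms $S^\loz$, passes to a monoidal alteration $S^{\loz\loz}$ on which the extension of $\sigma$ is globally represented by a PL function (this step also repairs the gluing issue you elide when you ``fix'' $\alpha_V$ globally --- local twisting functions are unique only up to PL functions pulled back from the base, and need the total-ordering normalisation to glue), compactifies $S^{\loz\loz}$ to an open $U$ of a log blowup $S^\bloz$ via \ref{lem:compactify_to_log_blowup}, and only then applies \ref{lem:regular_semistable_model} to $C^\bloz/S^\bloz$. Condition (2) for the pullback $\tilde U$ then follows from the universal property of $S^\loz$ (via Lemma 4.3 of \cite{Holmes2017Extending-the-d}), not from any interpolation property of the extended function; \ref{lem:extend_pl} is used only to produce condition (1). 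Your dense open should be the lift of $S^{\loz\loz}$, not the treelike locus or the multidegree-$\ul 0$ locus of a chosen extension.
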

\begin{proof}
Write $\sigma\colon S \dashrightarrow \J$ for the rational map induced by $\ca L$, and let $\psi_1\colon S^\loz \to S$ be the universal $\sigma$-extending morphism. 

\textbf{Claim}: there exists a representable monoidal alteration $S^{\loz\loz}$ of $S^\loz$ over which the map $\sigma \colon S^\loz \to \J$ can be represented as $\psi_1^*\ca L(\alpha)$ for some PL function $\alpha$ over $S^{\loz\loz}$. 

The claim is clear from \ref{rem:exists_PL_function} locally on $S^\loz$, but these PL functions are only unique up to addition of a PL function from the base, and so need not glue. We define $S^{\loz\loz}$ to be the subfunctor of $S^\loz$ where the maps $\alpha\colon C \to P$ (in the notation of \ref{rem:exists_PL_function}) can be chosen such that their set of values is totally ordered (in the ordering on $P$ induced by the monoid structure on $\ghost^\gp$). That this subfunctor is a representable monoidal alteration of $S^\loz$ is proven exactly as in \cite[Theorem 5.3.4]{Marcus2017Logarithmic-com} for the map $\cat{Rub} \to \cat{Div}$, to which it is closely analogous. 

Now we can construct these $\alpha$ locally on $S^{\loz\loz}$ just as before, but with $P = \bb G_M^\trop$ and the additional requirement that the smallest value taken by $\alpha$ on any vertex is zero. This makes the $\alpha$ unique, hence they glue to a global PL function, proving the claim. 

Now let $S^\bloz \to S$ be a sufficiently fine log blowup for $S^{\loz\loz} \to S$, and let $U \hra S^\bloz$ be the lift of $S^{\loz\loz}$.

Writing $C^\bloz/S^\bloz$ for the pullback of $C/S$, we apply \ref{lem:regular_semistable_model} to the $C^\bloz/S^\bloz$ to construct a monoidal alteration $\tilde S \to S^\bloz$ and a subdivision $\tilde C$ of $C^\bloz \times_{S^\bloz} \tilde S = C \times_S \tilde S$ with $\tilde C$ (and hence $\tilde S$) regular. Writing $\psi\colon \tilde S \to S$ for the composite, we claim that the pair $(\tilde C/\tilde S, \psi^*\ca L)$ is almost twistable. 

Let $\tilde U$ be the pullback of $U$ to $\tilde S$ (a twistable open), and let $\alpha$ be a twisting function over $\tilde U$. Then \ref{lem:extend_pl} implies that this $\alpha$ can be extended to a PL function over the whole of $\tilde S$. 

Now let $T \to S$ be a trait with generic point $\eta$ landing in $\tilde U$. Suppose that the map $\eta \to \J$ given by $(\psi_1^*\ca L)(\alpha)$, then it is proven in \cite[Lemma 4.3]{Holmes2017Extending-the-d} that this cannot be extended to a map $T \to \J$ unless it can already be extended to a map $T \to U$. This shows that $(\tilde C/\tilde S, \psi^*\ca L)$ is almost twistable, with $\tilde U \hra \tilde S$ the largest twistable open. 
\end{proof}

Let $(\tilde C /\tilde S$, $\psi^*\ca L)$ be as in the statement of \ref{lem:extend_PL_after_alteration}, with twisting function $\alpha$ over $\tilde S$. Then we have maps
\begin{equation}
\phi_{\ca L}\colon \tilde S \to \Pictdz \;\;\; \text{ and } \;\;\; \phi_{\ca L(\alpha)}\colon \tilde S \to \Pictdz
\end{equation}
induced by $\psi^*\ca L$ and $\psi^*\ca L (\alpha)$ respectively. 
Then
\begin{theorem}\label{thm:key_comparison}
We have an equality of cycles
\begin{equation}\label{eq:LogDR_eq_DR}
\phi_{\ca L}^*\LogDR = \phi_{\ca L(\alpha)}^*\DRop
\end{equation}
in $\LogChow(S)$. 
\end{theorem}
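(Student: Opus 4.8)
The plan is to recognise \ref{thm:key_comparison} as essentially a direct instance of the almost-twistable invariance \ref{lem:invariance_7_b}, combined with the fact that logarithmic Chow is insensitive to the monoidal alteration $\psi$. The heavy lifting has already been done in constructing the family and in the invariance lemmas, so the proof should be short; the only genuine point is the transport of the identity from $\LogChow(\tilde S)$ to $\LogChow(S)$.

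First I would invoke \ref{lem:extend_PL_after_alteration}: the pair $(\tilde C/\tilde S, \psi^*\ca L)$ is almost twistable, with largest twistable open $\tilde U \hra \tilde S$, and $\alpha$ is a PL function on all of $\tilde S$ (extending, via \ref{lem:extend_pl}, a twisting function on $\tilde U$). Since $\tilde S$ is regular log regular and we are in the characteristic-zero setting in force, $\tilde S$ is smooth log smooth, so $(\tilde C/\tilde S, \psi^*\ca L)$ is exactly of the form to which \ref{lem:invariance_7_b} applies, with $\tilde S$ playing the role of the base ``$S$'' of that lemma and $\psi^*\ca L$, $\psi^*\ca L(\alpha)$ the two line bundles inducing $\phi_{\ca L}$ and $\phi_{\ca L(\alpha)}$. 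Note that the subdivision is already built into $\tilde C$, so no separate comparison between $\tilde C$ and $C\times_S\tilde S$ is needed: the maps $\phi_{\ca L},\phi_{\ca L(\alpha)}\colon \tilde S\to\Pictdz$ are precisely the ones appearing in the lemma. Applying \ref{lem:invariance_7_b} verbatim then yields
\begin{equation*}
\phi_{\ca L}^*\LogDR = \phi_{\ca L(\alpha)}^*\DRop \quad\text{in }\LogChow(\tilde S).
\end{equation*}

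It then remains to read this as an equality in $\LogChow(S)$, and here I would use that $\psi\colon \tilde S \to S$ is a monoidal alteration. Because monoidal alterations compose, base-change, and can be resolved to smooth ones, the smooth monoidal alterations of $\tilde S$ are cofinal among those of $S$ (given $S'\to S$, resolve $\tilde S\times_S S'$ to a smooth monoidal alteration dominating both $\tilde S$ and $S'$). By the colimit description in \ref{def:Logch_qcpt} the two colimits therefore agree, so $\psi^*\colon \LogChow(S) \isom \LogChow(\tilde S)$ is an isomorphism; transporting the displayed identity back along it gives the asserted equality in $\LogChow(S)$. The main (and only mild) obstacle is exactly this cofinality argument, and it is precisely the reason the colimit in \ref{def:Logch_qcpt} is taken over all monoidal alterations rather than only log blowups: it guarantees that the regular model $\tilde S$ produced by \ref{lem:regular_semistable_model} and \ref{lem:extend_PL_after_alteration} computes the same logarithmic Chow ring as $S$, so that the local computation of $\LogDR$ in terms of $\DRop$ survives the passage back down to the quasi-compact open $S \hra \Pictdz$.
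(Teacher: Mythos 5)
Your proposal is correct and follows exactly the paper's own route: the paper's proof is literally ``Immediate from \ref{lem:invariance_7_b,lem:extend_PL_after_alteration}'', which is the combination you carry out. The only addition is that you spell out the identification $\LogChow(\tilde S)\cong\LogChow(S)$ via cofinality of monoidal alterations, a detail the paper leaves implicit; your justification of it is sound.
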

\begin{proof}
Immediate from \ref{lem:invariance_7_b,lem:extend_PL_after_alteration}. 
\end{proof}

\subsection{LogDR is tautological}\label{sec:logDR_tautological}

If $\ca L$ is the universal line bundle on the universal curve $\pi\colon C  \to \Pictdz$, we define the class 
\begin{equation}\label{eq:eta}
\eta = \pi_*(c_1(\ca L)^2) \in \CHop(\Pictdz). 
\end{equation}
In \cite[Definition 4]{Bae2020Pixtons-formula} we defined a tautological subring of $\CHop(\Pictdz)$; it is the $\bb Q$-span of certain decorated prestable graphs of degree $0$, as described in \cite[Section 0.3.3]{Bae2020Pixtons-formula}; in particular, it includes the class $\eta$ from \ref{eq:eta}. Here we prove that $\LogDR$ is contained in the corresponding tautological subring of $\LogChow(\Pictdz)$. In fact, we can prove something stronger\footnote{This improvement was suggested to us by Johannes Schmitt, to whom we are very grateful for permission to include it. }. We write $\bb Q[\eta] \sub \CHop(\Pictdz)$ for the sub-$\bb Q$-algebra of the Chow ring generated by the class $\eta$ of \ref{eq:eta}, and recall from \ref{def:log_tautological} that $\bb Q[\eta]^\log$ denotes the corresponding subring of $\LogChow(\Pictdz)$; we show that $\LogDR$ lies in $\bb Q[\eta]^\log$. 

Continuing in the notation of the previous subsection, we can pull back $\bb Q[\eta]^\log$ along $\phi_\ca L\colon S \to \Pictdz$ to give a subring of $\LogChow(S)$. Since $\phi_\ca L$ is strict this is equivalent to pulling back  $\bb Q[\eta] \sub \CHop(\Pictdz)$ to $\CHop(S)$, then taking the corresponding subring of $\LogChow(S)$. We denote the resulting subring $ \bb Q[\eta]_S^\log \sub \LogChow(S)$. 

\begin{lemma}\label{lem:LogDR_taut} Suppose $\field$ has characteristic zero.
The cycle $\phi_{\ca L(\alpha)}^*\DRop$ lies in $  \bb Q[\eta]_S^\log \sub \LogChow(S)$. 
\end{lemma}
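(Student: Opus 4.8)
The plan is to read the conclusion off Pixton's formula for $\DRop$ once the computation has been pushed into the multidegree-$\ul 0$ world. Since $\psi\colon \tilde S \to S$ is a monoidal alteration, pullback identifies $\LogChow(S)$ with $\LogChow(\tilde S)$ and carries $\bb Q[\eta]_S^\log$ isomorphically onto $\bb Q[\eta]_{\tilde S}^\log$, so it suffices to prove that $\phi_{\ca L(\alpha)}^*\DRop$ lies in $\bb Q[\eta]_{\tilde S}^\log$. By \ref{lem:extend_PL_after_alteration} the pair $(\tilde C/\tilde S, \psi^*\ca L)$ is almost twistable with twisting function $\alpha$ and dense twistable open $\tilde U \hra \tilde S$, and over $\tilde U$ the bundle $\psi^*\ca L(\alpha)$ has multidegree $\ul 0$; thus $\phi_{\ca L(\alpha)}|_{\tilde U}$ factors through the multidegree-$\ul 0$ locus $\J \subset \Pictdz$.

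First I would isolate the multidegree-$\ul 0$ content. Tracing the proof of \ref{lem:invariance_7_b}, the map $\bar\sigma\colon \tilde S \to \bar\J$ induced by $\psi^*\ca L(\alpha)$ has total degree $0$ everywhere, and the almost-twistability gives the fibre-product identity $\tilde S \times_{\bar\J} e = \tilde U \times_\J e$, so that $\phi_{\ca L(\alpha)}^*\DRop = j_*\bar\sigma^![e]$ is computed entirely inside the multidegree-$\ul 0$ locus. On that locus $\DRop$ is identified with the refined class of the unit section $e$ of the semiabelian scheme $\J \to \frak M$. The key input is then the observation (due to Schmitt) extracted from the explicit Pixton formula of \cite{Bae2020Pixtons-formula}: restricted to the multidegree-$\ul 0$ locus, every summand attached to a nontrivial graph or carrying a marking/edge decoration drops out, and $\DRop$ collapses to a polynomial in $\eta$ alone (morally the Poincar\'e/Hain expression $\eta^g/g!$ up to sign for the class of the zero section). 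I would make this precise as a statement purely about the class of $e$ in $\CHop(\J)$.

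Next I would translate between the two copies of $\eta$. Writing $c_1(\psi^*\ca L(\alpha)) = c_1(\psi^*\ca L) + c_1(\ca O(\alpha))$ and expanding inside $\pi_*$, the difference $\phi_{\ca L(\alpha)}^*\eta - \phi_\ca L^*\eta$ equals $2\pi_*(c_1(\psi^*\ca L)\,c_1(\ca O(\alpha))) + \pi_*(c_1(\ca O(\alpha))^2)$. Because $\ca O(\alpha)$ is supported on the vertical boundary divisors of $\tilde C/\tilde S$, these two pushforwards are fibrewise integrals of boundary-supported piecewise-polynomial classes built from the PL function $\alpha$ (with integer coefficients given by the component-degrees of $\psi^*\ca L$), hence lie in the image of $\Phi^\log\colon \on{sPP}(\tilde S) \to \LogChow(\tilde S)$. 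Thus $\phi_{\ca L(\alpha)}^*\eta \in \bb Q[\eta]_{\tilde S}^\log$, and any polynomial in it stays in $\bb Q[\eta]_{\tilde S}^\log$; combining with the previous paragraph gives the claim.

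The main obstacle is the second step: making rigorous the assertion that the multidegree-$\ul 0$ specialisation of Pixton's formula is genuinely a polynomial in $\eta$ with no surviving $\psi$- or strata-decorations, and that, via the fibre-product identity coming from almost-twistability, this controls the global class $\phi_{\ca L(\alpha)}^*\DRop$ on all of $\tilde S$ rather than merely its restriction to $\tilde U$. Any boundary-supported correction arising from the danger locus $\tilde S \setminus \tilde U$ must be shown to land back in $\bb Q[\eta]_{\tilde S}^\log$; I expect to handle this by determining both classes on a common sufficiently fine log blowup and invoking \ref{thm:key_comparison} together with the fact that the intersection defining $\DRop$ is concentrated over $\tilde U$.
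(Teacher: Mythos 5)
There is a genuine gap, and it sits at the centre of your argument. Your key claim --- that on the multidegree-$\ul 0$ locus every boundary summand of Pixton's formula drops out and $\DRop$ collapses to a polynomial in $\eta$ alone, ``morally $\eta^g/g!$'' --- is false. The trivial bundle $\ca O_C$ on the universal curve over $\Mbar_{g,n}$ has multidegree $\ul 0$ everywhere, and $\eta=\pi_*(c_1(\ca O_C)^2)=0$, yet $\DRop(\ca O_C)=(-1)^g\lambda_g\neq 0$; so the restriction of $\DRop$ to the multidegree-$\ul 0$ locus cannot lie in the polynomial ring on $\eta$. The Hain/Poincar\'e expression you invoke is valid only in compact type, and the whole content of the Pixton-type formula is precisely the boundary corrections beyond compact type; these do not vanish for multidegree-$\ul 0$ bundles. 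Since the rest of your argument (the fibre-product identity from almost-twistability, the comparison of $\phi_{\ca L(\alpha)}^*\eta$ with $\phi_{\ca L}^*\eta$, and the proposed handling of the danger locus) all feeds into this false reduction, the proof does not go through. Note also that the lemma does not assert membership in $\bb Q[\eta]$ but in $\bb Q[\eta]_S^{\log}$, which by construction contains the images of all (subdivided) piecewise-polynomial functions, i.e.\ boundary classes on log blowups; the boundary terms are supposed to survive and land there.

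The paper's proof takes the opposite tack: it applies Pixton's formula for $\DRop$ directly on ${\Pictdz}^{\loz}$ (equation (56) of \cite[\S0.7]{Bae2020Pixtons-formula}), which writes $\phi_{\ca L(\alpha)}^*\DRop$ as a polynomial in (i) the class $\pi_*(c_1(\psi^*\ca L(\alpha))^2)$ and (ii) the classes $\psi_h+\psi_{h'}$ attached to edges of boundary graphs, and then shows each of these generators lies in $\bb Q[\eta]_S^{\log}$. For (i) one expands $c_1(\psi^*\ca L(\alpha))^2$: the pure $\psi^*\ca L$ term gives the pullback of $\eta$, while the cross terms and the $c_1(\ca O_C(\alpha))^2$ term are pushforwards of products of vertical boundary divisors on $\tilde C/\tilde S$, which are computed to be boundary classes on $\tilde S$ (using $\pi^*Z=D+E$ to reduce $\pi_*(D^2)$ to $\pi_*(D\cdot E)$); this is close in spirit to your third paragraph, which is the salvageable part of your write-up. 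For (ii) the classes $\psi_h+\psi_{h'}$ are first Chern classes of conormal bundles of boundary divisors on the simple stack $\tilde S$, hence self-intersections of boundary divisors. If you want to repair your proof, replace the ``collapse to $\eta$'' step by this term-by-term analysis of the actual formula.
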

We are grateful to Johannes Schmitt for pointing out an omission in an earlier version of the proof (as well as the strengthening mentioned above). 
\begin{proof}
This is an easy consequence of Pixton's formula for $\DRop$ on ${\Pictdz}^\loz$, as stated in equation (56) of \cite[\S0.7]{Bae2020Pixtons-formula}. The formula expresses $\DRop$ as a polynomial in the following classes: 
\begin{enumerate}
\item The class $\pi_*(c_1(\psi^*\ca L(\tilde \alpha))^2)$;
\item Classes $\psi_h + \psi_{h'}$  where $h$, $h'$ are the two half-edges forming an edge of a graph of $C/\tilde S$.
\end{enumerate}
It hence suffices to show that the above classes lie in $\bb Q[\eta]_S^\log$; we treat them in order:
\begin{enumerate}
\item The class $\pi_*(c_1(\psi^*\ca L(\tilde \alpha))^2)$ can be expanded as a sum 
\begin{equation*}
\pi_*(c_1(\psi^*\ca L)^2) + \pi_*(c_1(\psi^*\ca L)c_1(\ca O_C(\tilde \alpha)) + \pi_*(c_1(\ca O_C(\tilde \alpha))^2). 
\end{equation*}
The first summand is the pullback of the tautological class $\eta = \pi_*(c_1(\ca L)^2)$ from $\CHop(\Pictdz)$, hence is in $\bb Q[\eta]_S^\log$. For the second summand, we can reduce to computing $\pi_*(c_1(\psi^*\ca L)D)$ where $D$ is some vertical prime divisor on $C/\tilde S$, say with image a prime divisor $Z$ on $\tilde S$. Then for dimension reasons we see that $\pi_*(c_1(\psi^*\ca L)D)$ is an integer multiple of the class of the boundary divisor $Z$, in particular is in $\bb Q[\eta]_S^\log$. 

Finally, the class $c_1(\ca O_{C}(\tilde \alpha))$ can be written as a sum of vertical boundary divisors on $C/\tilde S$. If $D$ and $E$ are distinct vertical prime divisors then $D$ and $E$ meet properly, and their locus of intersection is a union of vertical codimension 2 loci in $C$ (which push down to zero on $\tilde S$ for dimension reasons) and horizontal boundary strata which push forward to boundary classes on $\tilde S$. 

It remains to show that $\pi_*(D^2)$ is tautological. For this, let $Z$ be the prime divisor in $\tilde S$ which is the image of $D$, and let $E$ be the vertical divisor lying over $Z$ such that $\pi^*Z = D + E$. Then $\pi_*(D^2) = \pi_*(D \cdot (\pi^*Z - E)) = \pi_*(D \cdot E)$, which reduces us to the previous case. 

\item These are exactly the first chern classes of conormal bundles to boundary divisors on $\tilde S$. As such they can be realised as self-intersections of these boundary divisors (as we assume $\tilde S$ simple), hence are in $\bb Q[\eta]_S^\log$. \qedhere
\end{enumerate}
%
\end{proof}

Putting together \ref{thm:key_comparison} and \ref{lem:LogDR_taut} we obtain 
\begin{corollary}\label{thm:LogDR_tautological} Suppose $\field$ has characteristic zero.
Write $T \sub \CHop(\Pictdz)$ for the tautological ring as in \cite[Definition 4]{Bae2020Pixtons-formula}, and $\bb Q[\eta]\sub T$ for the subring generated by the class $\eta$ from \ref{eq:eta}. Then the class $\LogDR \in \LogChow(\Pictdz)$ lies in $\bb Q[\eta]^\log \sub T^\log$. 
\end{corollary}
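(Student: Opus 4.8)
The plan is to verify the membership one quasi-compact open at a time and then glue, the per-open statement being nothing more than the composite of \ref{thm:key_comparison} with \ref{lem:LogDR_taut}. Recall that $\Pictdz$ is neither quasi-compact nor Deligne--Mumford, so I would work through the defining limits $\LogChow(\Pictdz) = \on{lim}_{U\in\on{qOp}(\Pictdz)}\LogChow(U)$ and, by the sheaf property of $\on{sPP}$, $\on{sPP}(\Pictdz)=\on{lim}_U\on{sPP}(U)$. The generator $\eta$ of \ref{eq:eta} is a global class, and $\bb Q[\eta]^\log$ is the sub-$\bb Q[\eta]$-algebra of $\LogChow(\Pictdz)$ generated by the image of $\Phi^\log$; since both the coefficient ring $\bb Q[\eta]$ and the generating image $\Phi^\log(\on{sPP})$ restrict compatibly to quasi-compact opens, it suffices to show that for every strict quasi-compact open immersion $i\colon S\hra\Pictdz$ the restriction $i^*\LogDR$ lies in $\bb Q[\eta]_S^\log\sub\LogChow(S)$.

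So fix such an $S$, with universal curve $C/S$ and universal line bundle $\ca L$, so that $i=\phi_\ca L$ is strict. By \ref{lem:extend_PL_after_alteration} I would choose a monoidal alteration $\psi\colon\tilde S\to S$ and a subdivision $\tilde C$ of $C\times_S\tilde S$ making $(\tilde C/\tilde S,\psi^*\ca L)$ almost twistable, with twisting function $\alpha$ over $\tilde S$. Because $\psi$ is a monoidal alteration and we use rational coefficients, $\psi^*\colon\LogChow(S)\iso\LogChow(\tilde S)$ is an isomorphism carrying $\bb Q[\eta]_S^\log$ onto $\bb Q[\eta]_{\tilde S}^\log$; and by the setup of \ref{sec:LogDR_from_DR} (using that $\LogDR$ is unchanged by the intervening subdivision of the curve) the restriction $i^*\LogDR$ corresponds under $\psi^*$ to $\phi_\ca L^*\LogDR$. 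Now \ref{thm:key_comparison} gives
\begin{equation*}
\phi_\ca L^*\LogDR=\phi_{\ca L(\alpha)}^*\DRop
\end{equation*}
in $\LogChow(S)$, and \ref{lem:LogDR_taut} (this is where characteristic zero and Pixton's formula enter) shows the right-hand side lies in $\bb Q[\eta]_S^\log$. Chaining the two yields $i^*\LogDR\in\bb Q[\eta]_S^\log$, as wanted.

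Finally I would assemble the local statements. As $S$ ranges over $\on{qOp}(\Pictdz)$ the classes $i^*\LogDR$ are mutually compatible, being restrictions of the single class $\LogDR$, and each lies in the corresponding $\bb Q[\eta]_S^\log$. Since $\LogDR$ sits in the fixed codimension $g$ and $\LogChow$ is graded, the local tautological expressions involve only bounded-degree polynomials in the global generators $\eta$ and $\Phi^\log(\on{sPP})$, so a compatible family of such expressions glues to a global one; hence $\LogDR\in\bb Q[\eta]^\log$. The inclusion $\bb Q[\eta]^\log\sub T^\log$ is then immediate from $\bb Q[\eta]\sub T$ and the monotonicity of $T\mapsto T^\log$. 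The genuine content of the corollary lives entirely in \ref{thm:key_comparison} and \ref{lem:LogDR_taut}, which I am assuming; the only point requiring real care in the corollary itself is this last globalisation, namely that a class which is tautological on each quasi-compact open is tautological on all of the non-quasi-compact stack $\Pictdz$. Everything else is formal.
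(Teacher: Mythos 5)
Your proof is correct and takes essentially the same route as the paper: the paper's entire proof of this corollary is the single line ``putting together \ref{thm:key_comparison} and \ref{lem:LogDR_taut}'', with the reduction to a quasi-compact strict open $S \hra \Pictdz$ already built into the setup of \ref{sec:LogDR_from_DR}. The only thing you add is an explicit discussion of the globalisation over $\on{qOp}(\Pictdz)$, which the paper leaves implicit.
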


\subsection{Conjecture C}\label{sec:conjecture_C}

In this section we prove Conjecture C of \cite{Molcho2021The-Hodge-bundl}; we again thank Johannes Schmitt for corrections and improvements to this argument. We first set up some general notation: if $X$ is any log smooth log algebraic stack over $\field$ we write $\divCHop(X)$ for the subring of $\CHop(X)$ generated by classes of degree 1 (in other words, by divisor classes; we call it the \emph{divisorial subring}). Similarly the ring $\LogChow(X)$ is graded by codimension, and we write $\divLogChow(X)$ for the subring generated in degree 1. 

\begin{lemma}\label{lem:log_is_divisorial}
If $T \sub \divCHop(X)$ is any subring, then $T^\log \sub \divLogChow(X)$. 
\end{lemma}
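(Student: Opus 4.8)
The plan is to exploit that $\divLogChow(X)$ is a subring of $\LogChow(X)$, so that it suffices to check both families of generators of $T^\log$ --- the image of $T$ and the image of $\Phi^\log$ --- lie in $\divLogChow(X)$. The first is immediate: since $T \sub \divCHop(X)$, it is generated by classes in $\CHop^1(X)$, and the canonical ring homomorphism $\CHop(X) \to \LogChow(X)$ of \ref{def:CH_to_LogCH_pullback} is pullback along log blowups, hence preserves the grading by codimension. It therefore carries $\CHop^1(X)$ into the degree-$1$ part of $\LogChow(X)$, so the image of $T$ is generated in degree $1$ and lies in $\divLogChow(X)$.

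The real content is that the image of $\Phi^\log$ lies in $\divLogChow(X)$. Since $\Phi^\log$ is a ring homomorphism sending the degree-$1$ sheaf $\on{sPP}^1$ into $\LogChow^1$ (on any log blowup a piecewise-linear function $\alpha$ goes to the divisor class $c_1(\ca O(\alpha))$), this will follow once I know that $\on{sPP}(X)$ is generated as a ring by $\on{sPP}^1$. Because $\LogChow(X)$ and $\on{sPP}(X)$ are both limits over the quasi-compact opens $U \in \on{qOp}(X)$, I would first reduce to the case $X$ quasi-compact, checking that membership in $\divLogChow$ may be verified after restriction to such $U$.

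For quasi-compact $X$, I would take a homogeneous $p \in \on{sPP}(X)$ of degree $n$ and, using \ref{lem:simplifying_blowups}, a log blowup $\nu\colon \tilde X \to X$ with $\tilde X$ simple on which $p$ is determined. The key input is then \ref{thm:global_gen_simple_stacks}: on the simple stack $\tilde X$ the map $\Sym^n(\ghost_{\tilde X}^\gp(\tilde X)) \to (\Sym^n\ghost_{\tilde X}^\gp)(\tilde X)$ is surjective, so $p$ can be written \emph{globally} on $\tilde X$ as a degree-$n$ polynomial in piecewise-linear functions $\alpha \in \ghost_{\tilde X}^\gp(\tilde X)$. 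Applying the ring homomorphism $\Phi_{\tilde X}$ then exhibits $\Phi^\log(p)$, as a class determined on $\tilde X$, as the same polynomial in the divisor classes $\Phi^1(\alpha) \in \CHop^1(\tilde X) \sub \LogChow^1(X)$, whence $\Phi^\log(p) \in \divLogChow(X)$. Summing over homogeneous pieces finishes the quasi-compact case.

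The hard part will be the global generation statement \ref{thm:global_gen_simple_stacks}, which is the only non-formal ingredient: without it a global piecewise-polynomial would be merely \emph{locally} a polynomial in piecewise-linear functions, so $\Phi^\log(p)$ would be only locally a polynomial in divisor classes and need not lie in the globally-generated ring $\divLogChow(X)$ --- this is exactly the failure in \ref{eg:nodal_cubic}, where $ab$ is a global piecewise-polynomial not globally expressible through piecewise-linear functions, and passing to a simple blowup via \ref{lem:simplifying_blowups} is what repairs it. The remaining care is in the reduction to the quasi-compact case, i.e. confirming that $\divLogChow$ is compatible with the defining limit of $\LogChow(X)$ over $\on{qOp}(X)$.
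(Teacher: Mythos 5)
Your proposal is correct and follows essentially the same route as the paper: reduce to the quasi-compact case, determine the class on a simplifying blowup via \ref{lem:simplifying_blowups}, and invoke the global generation theorem \ref{thm:global_gen_simple_stacks} so that $\Phi_{\tilde X}$, being a ring homomorphism sending piecewise-linear functions to degree-$1$ classes, lands in $\divLogChow(X)$. Your explicit separation of the two families of generators of $T^\log$ (the image of $T$, handled by the grading-preserving map $\CHop(X) \to \LogChow(X)$, versus the image of $\Phi^\log$) is in fact slightly more careful than the paper's own proof, which applies the global generation argument directly to an arbitrary element of $T^\log$.
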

\begin{proof}We may assume $X$ is quasi-compact. Let $t \in T^\log \sub \LogChow(X)$, then there exists a simple blowup $\tilde X \to X$ on which $t$ is determined; write $\tilde t \in \CHop(\tilde X)$ for the determination. By \ref{thm:global_gen_simple_stacks} there exists a polynomial $p$ in piecewise-linear functions on $\tilde X$ such that $\Phi_{\tilde X}(p) = \tilde t$, where $\Phi_{\tilde X}$ is the map as in \ref{eq:PP_to_CHop}. Now $\Phi_{\tilde X}$ is a ring homomorphism and piecewise-linear functions map to classes of degree 1, so the result follows. 
\end{proof}

\begin{theorem}[{\cite[Conjecture C]{Molcho2021The-Hodge-bundl}}] Suppose $\field$ is a field of characteristic zero. Then
$\LogDR$ lies in $\divLogChow$. 
\end{theorem}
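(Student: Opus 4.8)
The plan is to deduce this immediately from the two results already in hand: Corollary~\ref{thm:LogDR_tautological}, which gives $\LogDR \in \bb Q[\eta]^\log$, and Lemma~\ref{lem:log_is_divisorial}, which converts membership in $T^\log$ into membership in $\divLogChow$ whenever the subring $T$ is divisorial. So the only thing left to verify is the hypothesis of that lemma, namely that $\bb Q[\eta] \sub \divCHop(\Pictdz)$.

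First I would check that $\eta$ is a divisor class, i.e.\ a class of degree $1$. By definition $\eta = \pi_*(c_1(\ca L)^2)$, where $\pi\colon C \to \Pictdz$ is the universal curve. The class $c_1(\ca L)^2$ lies in $\CHop^2(C)$, and since $\pi$ has relative dimension $1$ its proper pushforward lowers codimension by $1$; hence $\eta \in \CHop^1(\Pictdz)$. Therefore the subalgebra $\bb Q[\eta] \sub \CHop(\Pictdz)$, being generated by the single degree-$1$ class $\eta$, is contained in $\divCHop(\Pictdz)$.

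I would then apply Lemma~\ref{lem:log_is_divisorial} with $X = \Pictdz$ and $T = \bb Q[\eta]$, obtaining $\bb Q[\eta]^\log \sub \divLogChow(\Pictdz)$. Combining this with Corollary~\ref{thm:LogDR_tautological} yields
\begin{equation*}
\LogDR \in \bb Q[\eta]^\log \sub \divLogChow(\Pictdz),
\end{equation*}
which is the desired statement.

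There is no genuine obstacle here, since all the real work has already been carried out in the earlier results; the theorem is simply their formal consequence. The one point I would state explicitly—because it is exactly what licenses the use of Lemma~\ref{lem:log_is_divisorial}—is the degree count showing that $\eta$ is divisorial. (If one wishes to record the pullback to $\Mbar_{g,n}$ asserted in the introduction, it then follows at once, as pullback along a strict morphism preserves both the codimension grading and the divisorial subring.)
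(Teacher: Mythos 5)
Your proposal is correct and follows exactly the paper's own argument: both deduce the theorem from Corollary~\ref{thm:LogDR_tautological} ($\LogDR \in \bb Q[\eta]^\log$) together with Lemma~\ref{lem:log_is_divisorial}, the only hypothesis to check being that $\eta = \pi_*(c_1(\ca L)^2)$ has degree $1$. Your explicit codimension count for $\eta$ is a small elaboration of what the paper states without proof, but it is the same route.
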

\begin{proof}
We know $\LogDR\in \bb Q[\eta]^\log$, and the class $\eta$ (defined in \ref{eq:eta}) has degree 1, so the result follows from \ref{lem:log_is_divisorial}. 
\end{proof}

\section{The double-double ramification cycle}

\subsection{Iterated double ramification cycles}
Let $r$ be a positive integer, and let $\Pictdz^r$ be the fibre product of $r$ copies of $\Pictdz$ over $\frak M$. This is smooth and log smooth, and comes with $r$ projection maps to $\Pictdz$. According to \ref{def:logCH_pullback} we can pull back $\LogDR$ along each of the projection maps, yielding $r$ elements of $\LogChow(\Pictdz^r)$. We define $\LogDR_r$ to be the product of these elements in the ring $\LogChow(\Pictdz^r)$. 

We can also give a more direct construction of $\LogDR_r$. Write $\J^r$ for the $r$-fold fibre product of $\J$ with itself over $\frak M$, with $e_r$ the unit section. Then over the locus of smooth curves we have a tautological morphism $\sigma_r\colon \Pictdz^r \to \J^r$, we view it as a rational map $\sigma_r\colon \Pictdz^r \dashrightarrow \J^r$, and let ${\Pictdz}^{r \loz}$ be the universal $\sigma_r$-extending stack over $\Pictdz^r$. The pullback $\sigma_r^*e_r$ is proper over $\frak M$, so we can apply the construction in \ref{sec:log_fun_class} to obtain a class $[\sigma_r^*e_r]_{\log} \in \LogChow(\Pictdz^r)$. 


\begin{lemma}\label{lem:twodefLogDRr}
These two constructions of $\LogDR_r$ coincide, i.e. 
\begin{equation}
\LogDR_r = [\sigma_r^*e_r]_{\log}. 
\end{equation}
\end{lemma}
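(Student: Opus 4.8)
The plan is to reduce the statement to an intersection-theoretic identity on a single smooth log blowup, where both constructions become explicit Gysin classes. Since $\LogChow(\Pictdz^r)$ is a limit over quasi-compact opens and both sides are compatible with restriction, I would first pass to a quasi-compact open $S \hra \Pictdz^r$. By definition the product $\LogDR_r = \prod_k \pi_k^*\LogDR$ is computed by determining each factor $\pi_k^*\LogDR$ on a common log blowup and multiplying the determinations in the operational Chow ring there. So everything comes down to comparing, in $\CHop(\tilde S)$ for a suitable smooth $\tilde S \to S$, the intersection product of the determinations of the $\pi_k^*\LogDR$ with the determination of $[\sigma_r^* e_r]_{\log}$.

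The first step is a universal-property computation identifying the universal $\sigma_r$-extending stack. Writing $X_k = \Pictdz^\loz \times_{\pi_k, \Pictdz} \Pictdz^r$ for the pullback of the universal $\sigma$-extending stack along the $k$-th projection, I claim
\begin{equation}
\Pictdz^{r\loz} \cong X_1 \times_{\Pictdz^r} \cdots \times_{\Pictdz^r} X_r.
\end{equation}
This holds because $\J^r$ is a fibre product over $\frak M$, so a morphism $T \to \J^r$ is a tuple of morphisms $T \to \J$, and (using separatedness of $\J/\frak M$ for uniqueness) the rational map $T \dashrightarrow \J^r$ extends if and only if each coordinate $T \dashrightarrow \J$ extends; the density condition is identical in all cases. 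Hence the right-hand fibre product satisfies exactly the universal property of $\Pictdz^{r\loz}$.

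Next I would produce a single sufficiently fine log blowup serving all constructions. By \ref{lem:compactify_to_log_blowup} and the existence of common refinements, there is a smooth log blowup $\tilde S \to S$ that is simultaneously sufficiently fine for each $X_k \to \Pictdz^r$; the lift of each $X_k$ is then a strict open $\tilde X_k \hra \tilde S$. Because open immersions commute with fibre products, the identification above shows $\tilde S$ is also sufficiently fine for $\Pictdz^{r\loz}$, with lift $\tilde X = \bigcap_k \tilde X_k$. Setting $Z_k = \tilde X_k \times_{\sigma, \J} e$ and $Z = \tilde X \times_{\sigma_r, \J^r} e_r$, I get $Z = \bigcap_k Z_k$, and by \ref{lem:determined_on_sufficiently_fine} together with smooth base change along the projections $\pi_k$, the determination of $\pi_k^*\LogDR$ on $\tilde S$ is $j_{k*} i^![\tilde X_k]$, while that of $[\sigma_r^*e_r]_{\log}$ is $j_* i_r^![\tilde X]$.

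The heart of the argument — and the step I expect to be the main obstacle — is the Gysin comparison
\begin{equation}
\prod_{k=1}^r j_{k*}\, i^![\tilde X_k] = j_*\, i_r^![\tilde X] \quad\text{in } \CHop(\tilde S).
\end{equation}
Here I would factor the unit section $e_r\colon \frak M \to \J^r = \J \times_{\frak M} \cdots \times_{\frak M} \J$ as an iterated composite of base changes of the single regular embedding $i\colon e \to \J$ (each step imposing the unit condition on one more coordinate), so that $i_r^!$ decomposes as a composite of copies of $i^!$ pulled back along the projections. Then, using that refined Gysin pullbacks along regular embeddings commute with one another and with proper pushforward — this is \cite[Theorem 2.1.12 (xi)]{Kresch1999Cycle-groups-fo}, the same commutativity already invoked to make the construction in \ref{sec:log_fun_class} well-defined — together with the projection formula on the smooth stack $\tilde S$, I would rewrite $j_* i_r^![\tilde X]$ as the iterated intersection of the cycles $Z_k$, that is, as $\prod_k j_{k*} i^![\tilde X_k]$. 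The delicate points to get right are that the excess/normal-bundle data is correctly matched under the factorisation of $e_r$ (so that no spurious Euler-class contributions appear), and that the fibre-product and lift identifications genuinely hold simultaneously on one smooth $\tilde S$. Once these are in hand the identity in $\CHop(\tilde S)$ is formal, and passing back through the colimit over blowups and the limit over quasi-compact opens yields the claimed equality in $\LogChow(\Pictdz^r)$.
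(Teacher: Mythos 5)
Your proposal is correct and follows essentially the same route as the paper: the paper likewise identifies the universal $\sigma_r$-extending stack with the $r$-fold fibre product of copies of $\Pictdz^\loz$ via the universal property (extension of a map to $\J^r$ being equivalent to coordinatewise extension), and then concludes by the compatibility of Gysin pullbacks with products, citing \cite[Example 6.5.2]{Fulton1984Intersection-th}. Your final Gysin comparison on a common sufficiently fine blowup is just that example's proof written out (via commutativity of Gysin maps with each other and with proper pushforward), so the two arguments coincide in substance.
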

\begin{proof}
We begin by comparing ${\Pictdz}^{r \loz}$ with $({\Pictdz}^\loz)^r$, where the latter denotes the $r$-fold fibre product over $\frak M$ in the category of fs log algebraic stacks. The composites ${\Pictdz}^{r \loz} \to \J^r \to \J$ are $\sigma$-extending, hence the universal property furnishes $r$ maps ${\Pictdz}^{r \loz} \to {\Pictdz}^\loz$, hence a map 
\begin{equation}\label{eq:compare_r_powers}
{\Pictdz}^{r \loz} \to ({\Pictdz}^\loz)^r
\end{equation}
to the fibre product. On the other hand, the fibre product $({\Pictdz}^\loz)^r$ is $\sigma_r$-extending, yielding an inverse to \ref{eq:compare_r_powers}. 

The claimed equality of cycles is then immediate from the construction in \ref{sec:log_fun_class} and an application of \cite[example 6.5.2]{Fulton1984Intersection-th} (whose proof carries over to this setting essentially unchanged). 
%
\end{proof}

Write $\ca L_1, \dots, \ca L_r$ for the tautological line bundles on the universal curve over $\Pictdz^r$, with corresponding classes 
\begin{equation}
\eta_i = \pi_*(c_1(\ca L)^2) \in \CHop(\Pictdz), 
\end{equation}
and let $\bb Q[\eta^r]$ denote the sub-$\bb Q$-algebra of $\CHop(\Pictdz^r)$ generated by these classes. From \ref{thm:LogDR_tautological} and the first construction of $\LogDR_r$ we obtain
\begin{lemma}\label{lem:r_LogDR_tautological}
\begin{equation}
\LogDR_r \in \bb Q[\eta^r]^\log \sub \LogChow(\Pictdz^r). 
\end{equation}
\end{lemma}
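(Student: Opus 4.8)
The plan is to combine the first (product) construction of $\LogDR_r$ with \ref{thm:LogDR_tautological}. Write $p_i\colon \Pictdz^r \to \Pictdz$ for the $i$-th projection, so that by definition $\LogDR_r = \prod_{i=1}^r p_i^!\LogDR$, where $p_i^!\colon \LogChow(\Pictdz) \to \LogChow(\Pictdz^r)$ is the ring homomorphism of \ref{def:logCH_pullback}. Since $\bb Q[\eta^r]^\log$ is a \emph{subring} of $\LogChow(\Pictdz^r)$, it suffices to show that each factor $p_i^!\LogDR$ lies in it; and as $\LogDR \in \bb Q[\eta]^\log$ by \ref{thm:LogDR_tautological}, everything reduces to the containment $p_i^!\bigl(\bb Q[\eta]^\log\bigr) \sub \bb Q[\eta^r]^\log$.

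To prove this containment I would first note that $p_i$ is strict: the log structures on both $\Pictdz$ and $\Pictdz^r$ are pulled back from $\frak M$, and $p_i$ commutes with the structure maps to $\frak M$, so $p_i^*\ghost_{\Pictdz}^\gp \to \ghost_{\Pictdz^r}^\gp$ is an isomorphism. By \ref{def:log_tautological} the algebra $\bb Q[\eta]^\log$ is generated over $\bb Q$ by $\eta$ and by the image of $\Phi^\log\colon \on{sPP}(\Pictdz)\to\LogChow(\Pictdz)$, and $p_i^!$ is a ring homomorphism, so it is enough to check these two kinds of generators. For $\eta$ I would use base change: the universal curve over $\Pictdz^r$ is the pullback along $p_i$ of the universal curve over $\Pictdz$, with $\ca L_i = p_i^*\ca L$, so compatibility of proper pushforward with lci pullback gives $p_i^!\eta = \pi_*(c_1(\ca L_i)^2) = \eta_i \in \bb Q[\eta^r]$. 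For the image of $\Phi^\log$ I would invoke naturality of $\Phi^\log$ with respect to pullback, i.e. $p_i^!\circ\Phi^\log_{\Pictdz} = \Phi^\log_{\Pictdz^r}\circ p_i^*$ on subdivided piecewise-polynomials, which carries the image of $\Phi^\log_{\Pictdz}$ into the image of $\Phi^\log_{\Pictdz^r}$; the latter lies in $\bb Q[\eta^r]^\log$ by definition. Granting these, $p_i^!$ sends all generators of $\bb Q[\eta]^\log$ into $\bb Q[\eta^r]^\log$, whence $p_i^!\LogDR \in \bb Q[\eta^r]^\log$ for every $i$, and the product $\LogDR_r$ lies there as well.

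The main obstacle is the naturality identity $p_i^!\circ\Phi^\log = \Phi^\log\circ p_i^*$, which is not isolated as a lemma in the text. I would establish it by unwinding the definition of $p_i^!$ in \ref{def:logCH_pullback} and of $\Phi^\log$ through the maps $\Phi_{\tilde S}$ on simple blowups, thereby reducing to the compatibility of the divisorial map $\Phi^1$ of \ref{eq:map_to_divisors} with pullback. Strictness of $p_i$ is exactly what makes this clean: a strict morphism pulls boundary divisors back to boundary divisors with multiplicity one, the line bundle $\ca O(m)$ attached to $m\in\ghost^\gp$ pulls back to $\ca O(p_i^*m)$ by functoriality of the construction $\ca O_S(-)$, and naturality of $c_1$ then finishes the comparison; this is the analogue for strict morphisms of the open-immersion compatibility \ref{lem:open_immersion_compatibility}. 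A minor bookkeeping point to verify along the way is that the simplifying blowups and lci refinements used to define $p_i^!$ can be chosen compatibly with those used to determine $\Phi^\log_{\Pictdz}$, which follows from \ref{lem:compactify_to_log_blowup} together with the independence-of-choices built into \ref{def:logCH_pullback}.
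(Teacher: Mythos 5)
Your proposal is correct and follows essentially the same route as the paper: the paper derives \ref{lem:r_LogDR_tautological} immediately from \ref{thm:LogDR_tautological} together with the definition of $\LogDR_r$ as the product of the pullbacks $p_i^!\LogDR$, which is exactly your decomposition. The only difference is that you spell out the containment $p_i^!\bigl(\bb Q[\eta]^\log\bigr) \sub \bb Q[\eta^r]^\log$ (naturality of $\Phi^\log$ and $p_i^!\eta = \eta_i$), details the paper leaves implicit.
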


\subsection{$\GL_r(\bb Z)$-invariance}\label{sec:invariance}

Let $G/S$ be a commutative group scheme and $M$ an $r \times r$ matrix with integer coefficients. Writing $G^{\times_S^r}$ for the fibre product of $G$ with itself $r$ times over $S$, we write 
\begin{equation}
[M]\colon G^{\times_S^r}\to G^{\times_S^r}
\end{equation}
for the endomorphism induced by $M$. If $M \in \GL_r(\bb Z)$ then this is an automorphism. 

Applying this to $\Pictdz$ over $\frak M$ with $M \in \GL_r(\bb Z)$ yields an automorphism 
\begin{equation}
[M] \colon \Pictdz^r \to \Pictdz^r, 
\end{equation}
and pulling back along the map yields an automorphism 
\begin{equation}\label{eq:M_pullback_LPic}
[M]^*\colon \LogChow(\Pictdz^r) \to \LogChow(\Pictdz^r). 
\end{equation}

\begin{theorem}\label{thm:GL_invariance_on_Pic}
The map $[M]^*$ of \ref{eq:M_pullback_LPic} takes $\LogDR_r$ to itself. 
\end{theorem}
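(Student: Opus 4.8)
The plan is to work with the direct construction of $\LogDR_r$ furnished by \ref{lem:twodefLogDRr}, namely $\LogDR_r = [\sigma_r^* e_r]_{\log}$, and to show that the automorphism $[M]$ lifts to a compatible automorphism of the entire tower of data entering the construction of \ref{sec:log_fun_class}. First I would record the structural compatibilities. The map $[M]\colon \Pictdz^r \to \Pictdz^r$ is the endomorphism attached to $M$ acting on the commutative group stack $\Pictdz^r$ (with group law given by tensor product), so it is a morphism over $\frak M$; since both source and target carry the pullback log structure from $\frak M$, it is a \emph{strict} automorphism, and $[M]^*$ on $\LogChow(\Pictdz^r)$ is induced in the evident way. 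Because $\sigma$ is a homomorphism of commutative group stacks over the smooth locus, the rational map $\sigma_r$ fits into a commuting square with $[M]$ on $\Pictdz^r$ and the group automorphism $[M]$ of $\J^r$. Moreover $[M]$ fixes the unit section, $[M]\circ e_r = e_r$, since any $M \in \GL_r(\bb Z)$ sends the origin of $\J^r$ to the origin.

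Next I would lift $[M]$ to the universal $\sigma_r$-extending stack. Pulling back ${\Pictdz}^{r\loz} \to \Pictdz^r$ along the automorphism $[M]$ yields, using the commuting square above, another $\sigma_r$-extending stack over $\Pictdz^r$; the terminal (universal) property then produces a unique map over $[M]$, and applying the same reasoning to $[M]^{-1}$ shows this is a strict automorphism $[M]^\loz\colon {\Pictdz}^{r\loz} \to {\Pictdz}^{r\loz}$ over $\frak M$, satisfying $\sigma_r \circ [M]^\loz = [M]\circ \sigma_r$ and $f \circ [M]^\loz = [M]\circ f$. Combined with $[M]\circ e_r = e_r$, this forces $[M]^\loz$ to restrict to an automorphism of the fibre product $Z = {\Pictdz}^{r\loz}\times_{\sigma_r,\J^r,e_r}\frak M$ covering the identity on $e_r = \frak M$. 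Thus the whole diagram used in \ref{sec:log_fun_class} --- the map $f$, the section $\sigma_r$, the closed substack $e_r$, and the fibre product $Z$ with its closed immersion $j$ --- carries a compatible action of $[M]$.

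To conclude, I would choose a sufficiently fine log blowup $\tilde S \to \Pictdz^r$ for $f$ with lift $\tilde X$, and pull it back along the strict automorphism $[M]$; the resulting blowup is sufficiently fine for the $[M]$-transformed data and is identified with the original via $[M]^\loz$. Since gysin pullbacks along lci morphisms commute with each other, with proper pushforward, and with pullback along the isomorphism $[M]$ \cite[Theorem 2.1.12 (xi)]{Kresch1999Cycle-groups-fo}, the class $j_* i^![\tilde X] \in \CHop(\tilde S)$ is carried to itself, whence $[M]^*\LogDR_r = \LogDR_r$ in $\LogChow(\Pictdz^r)$.

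The hard part will be the middle step: producing the lift $[M]^\loz$ from the universal property and verifying its strictness and compatibility with $\sigma_r$, $f$, and $e_r$, together with the bookkeeping needed to match sufficiently fine log blowups on the two sides so that the operational gysin--pushforward classes can be compared on a common model. Once the diagram is seen to be $[M]$-equivariant, the invariance of the resulting operational class is essentially formal.
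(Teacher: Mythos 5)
Your proposal is correct and follows essentially the same route as the paper: both work with the second construction $\LogDR_r = [\sigma_r^*e_r]_{\log}$ and both reduce the theorem to the observations that $[M]$ commutes with $\sigma_r$ (via the group automorphism $[M]$ of $\J^r$) and fixes the unit section $e_r$. The only difference is mechanical: where you use terminality of ${\Pictdz}^{r\loz}$ to produce an honest automorphism $[M]^\loz$ covering $[M]$ and then transport the class directly, the paper sidesteps constructing this lift by forming the common refinement $\Pictdz^{M\loz}$ of ${\Pictdz}^{r\loz}$ and its $[M]$-translate and comparing the four resulting classes there, using the independence of the class from the choice of birational log \'etale cover (\ref{lem:cover_gives_same_DR}); both devices accomplish the same identification.
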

\begin{proof}
For this we use the second construction of $\LogDR_r$, going via ${\Pictdz}^{r \loz}$. We write $\sigma_r\colon \Pictdz^r \dashrightarrow \J^r$, and we write $e$ for the unit section of $\J^r$. We define $\Pictdz^{M\loz}$ to be the limit (in the fs category) of the solid diagram
\begin{equation}
 \begin{tikzcd}
&  \Pictdz^{r\loz} \arrow[r, "\nu"] & \Pictdz^r \arrow[dd, "{[M]}"]\\
\Pictdz^{M\loz} \arrow[ur, dotted, "s"]\arrow[dr, dotted, "t"] & &\\
&    \Pictdz^{r \loz} \arrow[r, "\nu"] & \Pictdz^r
\end{tikzcd}
\end{equation}
(we can think of $\Pictdz^{M\loz}$ as the common refinement of $\Pictdz^{r\loz}$ with its translation along $[M]$). 
Now the composite $\nu \circ s$ is $\sigma_r$-extending, as is the composite $\nu \circ t$, so we obtain a commutative diagram
\begin{equation}
 \begin{tikzcd}
& \Pictdz^r \arrow[dd, "{[M]}"] \arrow[r, dashed] & \J^r \arrow[dd, "{[M]}"]\\
\Pictdz^{M\loz} \arrow[ur, "\nu\circ s"]\arrow[dr, "\nu \circ t"]  \arrow[urr, "\sigma_s", bend left = 60] \arrow[drr, "\sigma_t", bend right = 60, swap]& \\
 & \Pictdz^r  \arrow[r, dashed] & \J^r.  
\end{tikzcd}
\end{equation}
Now $\sigma_s^*e$ is a cycle on $\Pictdz^{M\loz}$, which can induce (following \ref{sec:log_fun_class}) a logarithmic cycle on $\Pictdz^r$ in two ways; either via the map $\nu \circ s$ or via the map $\nu \circ t$. Our notation is $[\sigma_s^*e]_{\nu \circ s, \log}$ for the former and $[\sigma_s^*e]_{\nu \circ t, \log}$ for the latter, and we define analogously $[\sigma_t^*e]_{\nu \circ s, \log}$ and $[\sigma_t^*e]_{\nu \circ t, \log}$, all elements of $\LogChow(\Pictdz_r)$. Applying lemma \ref{lem:twodefLogDRr} and commutativity of the diagram yields the relations
\begin{equation}
\LogDR_r = [\sigma_s^*e]_{\nu \circ s, \log} = [\sigma_t^*e]_{\nu \circ t, \log}, 
\end{equation}
\begin{equation}
[M^{-1}]^*\LogDR_r = [\sigma_s^*e]_{\nu \circ t, \log} \;\;\; \text{and} \;\;\; [M]^*\LogDR_r = [\sigma_t^*e]_{\nu \circ s, \log}. 
\end{equation}
Finally, we note that $[M]^*e = e$ and $\sigma_t = [M] \circ \sigma_s$, so that
\begin{equation}
[M]^*\LogDR_r = [\sigma_t^*e]_{\nu \circ s, \log} = [\sigma_s^*M^*e]_{\nu \circ s, \log} = [\sigma_s^*e]_{\nu \circ s, \log} = \LogDR_r. \qedhere
\end{equation}
\end{proof}

\begin{remark}One can alternatively prove this theorem by appealing to the invariance of $\cat{Div}$ (see \ref{sec:div}) under the action of $M$. 
\end{remark}

%

\subsection{On the moduli space of curves}
Here we translate the above results into the setting of \cite{Holmes2017Multiplicativit}. We fix non-negative integers $g$, $n$ and a positive integer $r$. We choose $r$ line bundles $\ca L_1, \dots, \ca L_r$ of total degree zero on the universal curve $C$ over $\Mbar_{g,n}$, of the form 
\begin{equation}\label{eq:franchetta}
\ca L_i = \omega^{k_i}(-\sum_{j=1}^n a_{i,j} x_j)
\end{equation}
where $a_{i,1}, \dots, a_{i,n}$ are integers summing to $k_i(2g-2)$. The tuple $\ca L_1, \dots, \ca L_r$ defines a morphism 
\begin{equation}
\Psi\colon \Mbar_{g,n} \to \Pictdz^r, 
\end{equation}
and we denote
\begin{equation}
\LogDR(\ca L_1, \dots, \ca L_r) = \Psi^*\LogDR \in \LogChow(\Mbar_{g,n}), 
\end{equation}
and 
\begin{equation}
\DRop(\ca L_1, \dots, \ca L_r) = \nu_* \LogDR(\ca L_1, \dots, \ca L_r) \in \CHop(\Mbar_{g,n}). 
\end{equation}

\begin{theorem}[DDR is tautological]\label{thm:DDR_tautological_on_Mbar}Suppose $\field$ has characteristic zero. The cycle $\DRop(\ca L_1, \dots, \ca L_r)$ lies in the tautological subring of $\CHop(\Mbar_{g,n})$. 
\end{theorem}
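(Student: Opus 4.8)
The plan is to follow the class $\DRop(\ca L_1, \dots, \ca L_r) = \nu_*\Psi^*\LogDR_r$ through the two operations $\Psi^*$ and $\nu_*$ in turn, starting from the structural input \ref{lem:r_LogDR_tautological}, which tells us that $\LogDR_r \in \bb Q[\eta^r]^\log \sub \LogChow(\Pictdz^r)$, i.e. $\LogDR_r$ lies in the sub-$\bb Q[\eta^r]$-algebra generated by the image of piecewise-polynomial functions under $\Phi^\log$.

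First I would pull back along $\Psi$. The key preliminary observation is that $\Psi$ is \emph{strict}: its composite with the projection $\Pictdz^r \to \frak M$ is the classifying map of the universal curve $\Mbar_{g,n} \to \frak M$, which factors through the strict open immersion $\Mbar_{g,n} \hra \frak M_{g,n}$ and is therefore strict, while $\Pictdz^r \to \frak M$ carries the pullback log structure. Strictness gives $\Psi^*\ghost_{\Pictdz^r} = \ghost_{\Mbar_{g,n}}$, so by compatibility of the piecewise-polynomial construction with strict pullback, $\Psi^*$ carries the image of $\Phi^\log$ on $\Pictdz^r$ into the image of $\Phi^\log$ on $\Mbar_{g,n}$. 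It then remains only to check that each generator $\Psi^*\eta_i$ lands in the usual tautological subring $T \sub \CHop(\Mbar_{g,n})$: indeed $\Psi^*\eta_i = \pi_*(c_1(\omega^{k_i}(-\sum_j a_{i,j}x_j))^2)$ expands by the projection formula into a polynomial in $\kappa_1$ and the $\psi$-classes, hence is tautological. Combining these two facts and using that $\Phi^\log$ is a ring homomorphism, we conclude that $\Psi^*\LogDR_r = \LogDR(\ca L_1, \dots, \ca L_r)$ lies in $T^\log \sub \LogChow(\Mbar_{g,n})$ in the sense of \ref{def:log_tautological}.

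Next I would apply the pushforward $\nu_*\colon \LogChow(\Mbar_{g,n}) \to \CHop(\Mbar_{g,n})$. For the image to remain tautological, one must know that $T$ is \emph{tectonic} in the sense of \ref{def:large_enough}, i.e. that $\nu_*(T^\log) \sub T$. Since $\Mbar_{g,n}$ is Deligne--Mumford and quasi-compact, \ref{lem:tectonic_condition} reduces this to the assertion that $T$ contains all normally decorated strata classes, which is exactly the fact we import from \cite{Molcho2021The-Hodge-bundl}. Granting it, $\nu_*(\Psi^*\LogDR_r) = \DRop(\ca L_1, \dots, \ca L_r) \in T$, which is the claim.

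The main obstacle is not the formal bookkeeping but the two imported facts on which everything rests: that $\LogDR_r$ is log-tautological (\ref{lem:r_LogDR_tautological}, which ultimately goes through \ref{thm:LogDR_tautological} by reducing to a multidegree-$\ul 0$ bundle and invoking Pixton's formula), and that the tautological ring is tectonic (resting on the normally-decorated-strata machinery of \cite{Molcho2021The-Hodge-bundl} via \ref{lem:tectonic_condition}). Given those inputs, the remaining work---strictness of $\Psi$ and tautologicality of $\Psi^*\eta_i$---is routine.
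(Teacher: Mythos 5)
Your proposal is correct and follows essentially the same route as the paper: pull back along $\Psi$ into $T^\log$ using \ref{lem:r_LogDR_tautological} and the fact that each $\Psi^*\eta_i$ is a polynomial in $\kappa_1$ and $\psi$-classes (because each $\ca L_i$ has the form \ref{eq:franchetta}), then push forward using that $T$ is tectonic via \ref{lem:tectonic_condition}. You merely spell out two steps the paper leaves implicit — the strictness of $\Psi$ and the explicit expansion of $\Psi^*\eta_i$ — both of which are accurate.
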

\begin{proof}
We define the tautological ring of $\Pictdz^r$ to be the ring generated by the pullbacks of the tautological rings of the factors $\Pictdz$. 
Then the pullback along $\Psi$ of the tautological ring of $\Pictdz^r$ coincides with the tautological ring $T$ of $\Mbar_{g,n}$, since each $\ca L_i$ is of the form \ref{eq:franchetta}. \Cref{lem:r_LogDR_tautological} then implies that $\LogDR(\ca L_1, \dots, \ca L_r)$ lies in $T^\log$. Now $T$ is tectonic by \ref{lem:tectonic_condition}, and so $\DRop(\ca L_1, \dots, \ca L_r) \in T$ by \ref{lem:pushforward_of_tectonic_log}. 
\end{proof}
Note that the ring $\bb Q[\eta]\sub \CHop(\Mbar_{g,n})$ is not in general tectonic, so we really need to work with the tautological ring here. 
\begin{theorem}[$\GL(\bb Z)$-invariance of DDR]\label{thm:invariance_on_Mbar}
If $M \in \GL_r(\bb Z)$ and 
\begin{equation*}
M [\ca L_1, \dots, \ca L_r] = [\ca F_1, \dots, \ca F_r], 
\end{equation*}
then 
\begin{equation}
\DRop(\ca L_1, \dots, \ca L_r) = \DRop(\ca F_1, \dots, \ca F_r). 
\end{equation}
\end{theorem}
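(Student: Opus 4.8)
The plan is to reduce the statement to the $\GL_r(\bb Z)$-invariance of $\LogDR_r$ on $\Pictdz^r$ already established in \ref{thm:GL_invariance_on_Pic}, using only functoriality of the pullback and the definition of $\DRop(-)$ as $\nu_*\LogDR(-)$. The single geometric input is the identification of the classifying map of the transformed bundles with the composite of the original classifying map and the endomorphism $[M]$ of \ref{sec:invariance}.

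First I would record that the tuple $\ca F_1, \dots, \ca F_r$ is again of the Franchetta form \ref{eq:franchetta}, so that it defines a morphism $\Psi_{\ca F}\colon \Mbar_{g,n} \to \Pictdz^r$ (writing $\Psi_{\ca L}$ for the map induced by $\ca L_1,\dots,\ca L_r$). Indeed, writing $M = (M_{ij})$, the relation $M[\ca L_1, \dots, \ca L_r] = [\ca F_1, \dots, \ca F_r]$ means $\ca F_i = \bigotimes_j \ca L_j^{\otimes M_{ij}}$, so $\ca F_i = \omega^{k_i'}(-\sum_l a_{i,l}' x_l)$ with $k_i' = \sum_j M_{ij} k_j$ and $a_{i,l}' = \sum_j M_{ij} a_{j,l}$; in particular each $\ca F_i$ still has total degree zero, so $\Psi_{\ca F}$ indeed lands in $\Pictdz^r$. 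The key step is then the identity $\Psi_{\ca F} = [M] \circ \Psi_{\ca L}$, which is immediate on points: $\Psi_{\ca L}$ sends $x$ to the tuple of restrictions $(\ca L_j|_{C_x})_j$, and applying $[M]$ produces the tuple whose $i$-th entry is $\bigotimes_j (\ca L_j|_{C_x})^{\otimes M_{ij}} = \ca F_i|_{C_x}$, which is exactly $\Psi_{\ca F}(x)$.

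With this identity in hand, functoriality of the pullback on $\LogChow$ (\ref{def:logCH_pullback}) gives
\begin{equation*}
\LogDR(\ca F_1, \dots, \ca F_r) = \Psi_{\ca F}^* \LogDR_r = \Psi_{\ca L}^*[M]^*\LogDR_r = \Psi_{\ca L}^*\LogDR_r = \LogDR(\ca L_1, \dots, \ca L_r),
\end{equation*}
where the third equality is \ref{thm:GL_invariance_on_Pic}. Applying the pushforward $\nu_*\colon \LogChow(\Mbar_{g,n}) \to \CHop(\Mbar_{g,n})$ of \ref{def:pushforward_smooth} to both sides and invoking the definition $\DRop(-) = \nu_*\LogDR(-)$ yields the claimed equality $\DRop(\ca F_1, \dots, \ca F_r) = \DRop(\ca L_1, \dots, \ca L_r)$.

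Since all of the real content is encapsulated in \ref{thm:GL_invariance_on_Pic}, I expect no serious obstacle here; the only points requiring a line of care are the verification that the $\ca F_i$ remain of total degree zero (so that $\Psi_{\ca F}$ is defined) and the functoriality $(\,[M] \circ \Psi_{\ca L})^* = \Psi_{\ca L}^* \circ [M]^*$, which is clean because $[M]$ is an isomorphism. I would also remark that this argument is precisely the moduli-space shadow of \ref{thm:GL_invariance_on_Pic}, transported along $\Psi_{\ca L}$ and then pushed down by $\nu_*$.
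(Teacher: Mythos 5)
Your proposal is correct and is essentially the paper's own argument: the paper's proof reads simply ``Immediate from \ref{thm:GL_invariance_on_Pic}'', and the chain $\Psi_{\ca F} = [M]\circ\Psi_{\ca L}$, functoriality of pullback, invariance of $\LogDR_r$ under $[M]^*$, and pushforward by $\nu_*$ is exactly what that remark leaves implicit. Your additional check that the $\ca F_i$ remain of the form \ref{eq:franchetta} with total degree zero is a worthwhile detail the paper omits, but it does not change the route.
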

\begin{proof}
Immediate from \ref{thm:GL_invariance_on_Pic}. 
\end{proof}

In the case $r=2$ this recovers \cite[Theorem 1.2]{Holmes2017Multiplicativit}. 

\bibliographystyle{alpha} 
\bibliography{prebib.bib}

\vspace{+16 pt}

\noindent David~Holmes, Rosa~Schwarz\\
\textsc{Mathematisch Instituut, Universiteit Leiden, Postbus 9512, 2300 RA Leiden, Netherlands} \\
  \textit{E-mail addresses}: \texttt{holmesdst@math.leidenuniv.nl}, \texttt{r.m.schwarz@math.leidenuniv.nl}

\end{document}